\numberwithin{equation}{section}
\newtheorem{theorem}{Theorem}[section]
\newtheorem{lemma}{Lemma}[section]
\newtheorem{definition}{Definition}[section]
\begin{document}
%\linenumbers
\begin{frontmatter}

%% Title, authors and addresses

%% use the tnoteref command within \title for footnotes;
%% use the tnotetext command for the associated footnote;
%% use the fnref command within \author or \address for footnotes;
%% use the fntext command for the associated footnote;
%% use the corref command within \author for corresponding author footnotes;
%% use the cortext command for the associated footnote;
%% use the ead command for the email address,
%% and the form \ead[url] for the home page:
%%
%% \title{Title\tnoteref{label1}}
%% \tnotetext[label1]{}
%% \author{Name\corref{cor1}\fnref{label2}}
%% \ead{email address}
%% \ead[url]{home page}
%% \fntext[label2]{}
%% \cortext[cor1]{}
%% \address{Address\fnref{label3}}
%% \fntext[label3]{}

\title{Analysis on a generalized two-component Novikov system
%Well-posedness, wave breaking and persistence properties for a weakly dissipative generalized two-component Novikov system
%\tnoteref{ack}
}
%\tnotetext[ack]{This work is partially supported by NSFC Grants (nos. 11322105 and 11671071).}

%% use optional labels to link authors explicitly to addresses:
%% \author[label1,label2]{<author name>}
%% \address[label1]{<address>}
%% \address[label2]{<address>}
\author[ad1]{Yonghui Zhou}
\ead{zhouyhmath@163.com}
\author[ad2]{Xiaowan Li}
\ead{xiaowan0207@163.com}
\author[ad3]{Shuguan Ji\corref{cor}}
\ead{jisg100@nenu.edu.cn}
\author[ad4]{Zhijun Qiao}
\ead{zhijun.qiao@utrgv.edu}
\address[ad1]{School of Mathematics, Hexi University, Zhangye 734000, P.R. China}
\address[ad2]{College of Mathematics and System Sciences, Xinjiang University, Urumqi, 830046, P.R. China}
\address[ad3]{School of Mathematics and Statistics and Center for Mathematics and Interdisciplinary Sciences, Northeast Normal University, Changchun 130024, P.R. China}
\address[ad4]{School of Mathematical and Statistical Sciences, The University of Texas Rio Grande Valley, Edinburg, Texas 78539, USA}
\cortext[cor]{Corresponding author.}

\begin{abstract}
%% Text of abstract
In this paper, we study the Cauchy problem for a generalized two-component Novikov system with weak dissipation. We first establish the local well-posedness of solutions by using the Kato's theorem. Then we give the necessary and sufficient condition for the occurrence of wave breaking in a finite time. Finally, we investigate the persistence properties of strong solutions in the weighted $L^{p}(\mathbb{R})$ spaces for a large class of moderate weights.
\end{abstract}

\begin{keyword}
%% keywords here, in the form: keyword \sep keyword
Two-component Novikov system; Local well-posedness; Wave breaking; Persistence properties.
%% MSC codes here, in the form: \MSC code \sep code
%% or \MSC[2008] code \sep code (2000 is the default)
\end{keyword}

\end{frontmatter}

%%
%% Start line numbering here if you want
%%
% \linenumbers

%% main text
\section{Introduction}

In this paper, we study %consider
the Cauchy problem for a generalized two-component Novikov system with weak dissipation
\begin{equation}\begin{cases}
u_{t}-u_{txx}+\left(g(u)\right)_{x}+\lambda(u-u_{xx})
=k\left(3uu_{x}u_{xx}+u^{2}u_{xxx}\right)+k \rho(u\rho)_{x},\\
\rho_{t}+k u^{2}\rho_{x}+k \rho uu_{x}=0,
\end{cases}\label{101}
\end{equation}
with the initial data
\begin{equation}
u(0,x)=u_{0}(x),\ \rho(0,x)=\rho_{0}(x),
\label{102}
\end{equation}
where $g(u)\in C^{\infty}(\mathbb{R},\mathbb{R}), g(0)=0$, $k\in \mathbb{R}\setminus\{0\}$, $\lambda$ is a nonnegative dissipative parameter.

For $g(u)=\frac{4}{3}u^{3}$, $\lambda=0$ and $k=1$, \eqref{101} corresponds to the following two-component Novikov system
\begin{equation}
\begin{cases}
u_{t}-u_{txx}+4u^{2}u_{x}
=3uu_{x}u_{xx}+u^{2}u_{xxx}+\rho(u\rho)_{x},\\
\rho_{t}+u^{2}\rho_{x}+\rho uu_{x}=0.
\end{cases}\label{103}
\end{equation}
Such a model was first proposed by Popowicz \cite{Popowicz2015}, in which the author verified that system \eqref{103} has a Hamiltonian structure. %In \cite{Luo2015}, Luo and Yin established the local well-posedness and blow-up criteria of solutions for system \eqref{103}.
In particular, %Especially,
if $\rho=0$, then \eqref{103} exactly becomes the Novikov equation \cite{Novikov2009} %is reduced to the classical Novikov equation
\begin{equation}
u_{t}-u_{txx}+4u^{2}u_{x}=3uu_{x}u_{xx}+u^{2}u_{xxx}
\label{104}
\end{equation}
which %was derived by Novikov \cite{Novikov2009} as
is integrable and  possesses similar properties to the Camassa-Holm equation and the Degasperis-Procesi
equation. %\cite{Wu2012}.
It is worth mentioning that \eqref{104} has a bi-Hamiltonian structure \cite{Hone2009}, complete integrability,  an infinite sequence of conserved quantities, and peaked solitons (peakons) \cite{Hone2008}. %Its solitary waves are peaked \cite{Hone2008}.
The initial impetus for investigating the Novikov equation is that it can be regarded as a generalization of the celebrated Camassa-Holm (CH) equation
\begin{equation}
u_{t}-u_{txx}+3uu_{x}=3uu_{x}u_{xx}+u^{2}u_{xxx}.
\label{105}
\end{equation}
Such model is a integrable equation which describes the unidirectional propagation of shallow water waves over a flat bottom. The CH equation was  implied in %derived by
Fokas and Fuchssteiner \cite{Fokas1981} as a special member in the hierarchy of evolution equation determined through the recursion operator scheme and  bi-Hamiltonian structures %, equation with infinitely many conservation laws,
and  became famous due to the shallow water wave regime and peakon feature propsoed by Camassa and Holm \cite{Camassa1993} from physical principles.
The significant differences between the CH equation and the Novikov equation are: 1) the former has quadratic nonlinearity while the latter has cubic nonlinearity; 2 ) Lax pair for the former is 2 by 2 while the latter is 3 by 3; and 3) the $N$-peakon dynamical system is completely integrable for the former, but not sure yet for the latter.

%Afterwards, its mathematical properties have been widely studied, such as local well-posedness \cite{Ni2011,Wu2012,Yan20121}, global existence of strong solutions \cite{Wu2012}, wave breaking phenomena \cite{Jiang2012,Wu2012}, global existence of weak solutions \cite{Lai2013,Wu2011} and existence and uniqueness of the global conservative weak solutions \cite{Chen2018}.

For $\rho\equiv0$, \eqref{101} corresponds to the following weakly dissipative generalized Novikov equation \cite{Ji2021}
\begin{equation}
u_{t}-u_{txx}+\left(g(u)\right)_{x}+\lambda(u-u_{xx})
=k\left(3uu_{x}u_{xx}+u^{2}u_{xxx}\right).
\label{106}
\end{equation}
If $g(u)=\frac{4}{3}u^{3}$ and $k=1$, then \eqref{106} is reduced to the weakly dissipative Novikov equation \cite{Yan2012}
\begin{equation}
u_{t}-u_{txx}+4u^{2}u_{x}+\lambda(u-u_{xx})=3uu_{x}u_{xx}+u^{2}u_{xxx}.
\label{107}
\end{equation}

In the last 30 years, the CH equation and its various generalizations
were tremendously investigated, such as the local well-posedness, global existence of strong solutions, persistence property, wave breaking phenomena, global existence of peakons, and global existence of conservative weak solutions,
see \cite{Constantin19981,Constantin1998,Constantin2000,Qiao2003CMP,Qiao2004AAM,Bressan2007,Chen2011,Brandolese2012,Brandolese2014,Brandolese20141,Brandolese20142,
Constantin20004,Constantin2009,Wahlen2006,Qiao2006JMP,Luo2015,Mustafa20072,Freire1,Freire2,Freire3,Ji20221,Chen2024,Chen2023,Qiao2024,Zhou2025,Chen2018,Ni2011,
Gui2010,Ji2021,Ji2022,Novruzov2022,Coclite20052,Zhou2022,Zhou2024} and the reference therein.
Inspired by the previous work, the goal of this paper is to study the local well-posedness, wave breaking phenomena and persistence properties of strong solutions for system \eqref{101}.

%Afterwards, its mathematical properties have been widely studied, such as local well-posedness \cite{Ni2011,Wu2012,Yan20121}, global existence of strong solutions \cite{Wu2012}, wave breaking phenomena \cite{Jiang2012,Wu2012}, global existence of weak solutions \cite{Lai2013,Wu2011} and existence and uniqueness of the global conservative weak solutions \cite{Chen2018}.

\textbf{Notation.} Throughout this paper, all spaces of functions are over $\mathbb{R}$ and for simplicity, we drop $\mathbb{R}$ in our notation of function spaces if there is no ambiguity. Additionally, we denote by $\|\cdot\|_{s}$ the norm in the Sobolev space $H^{s}(\mathbb{R})$. $C>0$ denotes a generic constant, $C_{i}>0(i=1,2,...)$ represents a specific constant.

\section{Local well-posedness}

\setcounter{equation}{0}

\label{sec:2}

In this section, we prove the local well-posedness result by using Kato's theorem.
For convenience, we state Kato's theorem in the form suitable for our purpose.
Consider the abstract quasilinear evolution equation of the form
\begin{equation}
\frac{dz}{dt}+A(z)z=f(z),\ t>0,
\label{201}
\end{equation}
with the initial datum $ z(0)=z_{0}.$
Assume that:
\begin{description}
\item[$(i)$] $A(y)\in L(Y, X)$ for $y\in Y$ with
$$\|(A(y)-A(z))w\|_{X}\leq \rho_{1}\|y-z\|_{X}\|w\|_{Y},\ y,z,w\in Y,$$
and $A(y)\in G(X,1,\beta)$(i.e., $A(y)$ is quasi-$m$-accretive), uniformly on bounded sets in $Y$;
\item[$(ii)$] $QA(y)Q^{-1}=A(y)+B(y)$, where $B(y)\in L(X)$ is bounded, uniformly on bounded sets in $Y$, and $Q$ is an isomorphism from $Y$ onto $X$. Moreover,
$$\|(B(y)-B(z))w\|_{X}\leq \rho_{2}\|y-z\|_{Y}\|w\|_{X},\ y,z\in Y, w\in X;$$
\item[$(iii)$] $f:Y\rightarrow Y$ is bounded on any bounded sets in $Y$ and also extends to a mapping from $X$ onto $X$ and satisfies
$$\|f(y)-f(z)\|_{Y}\leq \rho_{3}\|y-z\|_{Y},\ y,z\in Y,$$
$$\|f(y)-f(z)\|_{X}\leq \rho_{4}\|y-z\|_{X},\ y,z\in X,$$
\end{description}
where $\rho_{i}(i=1,2,3)$ depend only on $\max\{\|y\|_{Y},\|z\|_{Y}\}$ and $\rho_{4}$ depends only on $\max\{\|y\|_{X},\|z\|_{X}\}$.

\begin{theorem}[Kato's theorem \cite{Kato}]
\label{the1}  Assume that $(i)-(iii)$ hold. Given $z_{0}\in Y$, there exists a maximal $T>0$ depending only on $\|z_{0}\|_{Y}$ and unique solution $z$ to equation \eqref{201} such that
$$z=z(\cdot,z_{0})\in C([0,T);Y)\cap C^{1}([0,T);X).$$
Moreover, the mapping $z_{0}\mapsto z(\cdot,z_{0})$ is continuous from $Y$ to $C([0,T);Y)\cap C^{1}([0,T);X).$
\end{theorem}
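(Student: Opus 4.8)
The plan is to reduce the quasilinear problem \eqref{201} to a family of linear non-autonomous problems and then recover the nonlinear solution by a fixed-point argument carried out in the weaker space $X$ while maintaining uniform bounds in the stronger space $Y$. First I would fix a reference function $v$ in a closed ball $B_R=\{v\in C([0,T];Y):\sup_t\|v(t)\|_Y\le R\}$ and study the linearized equation $\frac{dz}{dt}+A(v(t))z=f(v(t))$. The first substantial task is the linear evolution theory: since $A(v(t))\in G(X,1,\beta)$ uniformly on bounded sets, the frozen family $\{A(v(t))\}_{t\in[0,T]}$ is stable in $X$, and the Lipschitz bound in $(i)$ together with the continuity of $v$ makes $t\mapsto A(v(t))$ strongly continuous; Kato's theory of linear evolution equations then produces a unique evolution operator $U_v(t,s)$ on $X$.

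The second step is to show that $U_v(t,s)$ also leaves $Y$ invariant and is bounded there, which is exactly where $(ii)$ enters. Using the isomorphism $Q$ and the identity $QA(v)Q^{-1}=A(v)+B(v)$, I would transport the $X$-estimates through $Q$: the conjugated family differs from $A(v(t))$ by the bounded perturbation $B(v(t))$, so it is again stable in $X$ and generates $QU_v(t,s)Q^{-1}$. This yields a $Y$-bound $\|U_v(t,s)\|_{L(Y)}\le e^{\beta'(t-s)}$ with $\beta'$ controlled by $\beta$, $R$, and the $B$-bound. Defining the solution operator $\Phi(v)(t)=U_v(t,0)z_0+\int_0^t U_v(t,s)f(v(s))\,ds$ and combining this estimate with the boundedness of $f$ from $(iii)$, I would choose $T>0$ small (depending only on $\|z_0\|_Y$) so that $\Phi$ maps $B_R$ into itself.

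Next I would show that $\Phi$ is a contraction in the \emph{weaker} $C([0,T];X)$-norm. Estimating $\Phi(v)-\Phi(w)$ requires comparing $U_v$ and $U_w$; the difference is controlled by $\|A(v)-A(w)\|$ and $\|f(v)-f(w)\|$, which the Lipschitz hypotheses in $(i)$ and $(iii)$ bound by $\rho_1,\rho_4\|v-w\|_X$ times constants depending on $R$. Shrinking $T$ further makes $\Phi$ a strict contraction, and since $B_R$ is closed in the $X$-metric (the $Y$-norm is weakly lower semicontinuous), Banach's theorem gives a unique fixed point $z\in C([0,T];X)$ with $\sup_t\|z(t)\|_Y\le R$. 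This $z$ solves \eqref{201}, and since $A(z)z-f(z)\in C([0,T];X)$ by the equation, it lies in $C^1([0,T];X)$.

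The remaining and most delicate point is upgrading the bounded, weakly continuous $Y$-valued trajectory to strong continuity $z\in C([0,T];Y)$. The standard route is a Bona--Smith type energy argument: one shows $t\mapsto\|z(t)\|_Y^2$ is continuous by estimating the growth of the $Y$-energy from both sides using quasi-accretivity, so that one-sided limits of the norm coincide; combined with weak continuity, norm convergence then forces strong convergence (the Radon--Riesz property). Uniqueness follows from the same $X$-norm Gronwall estimate, and continuous dependence in $C([0,T];X)$ is likewise Gronwall; promoting it to the stated $C([0,T];Y)$-continuity again invokes the Bona--Smith approximation, regularizing the data and interpolating between the uniform $Y$-bound and $X$-convergence. \textbf{The main obstacle} I anticipate is precisely the construction and two-space comparison of the linear evolution operators $U_v$ (steps one and two) together with the norm-continuity argument for strong $Y$-continuity; the fixed-point and Lipschitz bookkeeping is routine once the uniform linear estimates are secured.
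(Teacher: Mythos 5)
The paper does not actually prove this statement: Theorem~\ref{the1} is quoted verbatim from Kato \cite{Kato}, so the only meaningful comparison is with Kato's original argument, and your outline follows it in all essentials --- freezing the coefficient along $v$ in a $Y$-ball, invoking the linear theory of stable families to construct $U_v(t,s)$ on $X$, transporting stability to $Y$ through the isomorphism $Q$ via hypothesis $(ii)$ (the conjugated family being the bounded perturbation $A(v)+B(v)$), and closing a contraction for $\Phi(v)(t)=U_v(t,0)z_0+\int_0^t U_v(t,s)f(v(s))\,ds$ in the weaker $C([0,T];X)$ metric, with $U_v-U_w$ controlled by the standard identity $U_v(t,s)-U_w(t,s)=\int_s^t U_v(t,r)\bigl(A(w(r))-A(v(r))\bigr)U_w(r,s)\,dr$ together with $(i)$ and $(iii)$. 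Three remarks on where you deviate from Kato. First, the strong continuity $z\in C([0,T);Y)$ does not need your Radon--Riesz/Bona--Smith step: since $(i)$ makes $t\mapsto A(v(t))$ norm continuous in $L(Y,X)$ for $v\in C([0,T];X)$, Kato's linear theory already gives $U_v(t,s)$ strongly continuous on $Y$, so $Y$-continuity of the fixed point drops out of the Duhamel formula; the genuinely delicate point, where regularization arguments of Bona--Smith type are indeed used, is only the continuous dependence $z_0\mapsto z$ in $C([0,T);Y)$, which you correctly flag. Second, your closedness of $B_R$ in the $X$-metric via weak lower semicontinuity of the $Y$-norm tacitly requires $Y$ reflexive --- an assumption Kato makes explicitly, and which is automatic in the present application since $Y=H^s\times H^{s-1}$ is a Hilbert space, but it should be stated. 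Third, the $C^1([0,T);X)$ regularity requires $z\in C([0,T);Y)$ first, so that $t\mapsto A(z(t))z(t)-f(z(t))$ is continuous in $X$; the order of your last two steps should be reversed.
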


In order to apply Kato's theorem, we reformulate system \eqref{101} as follows
\begin{equation}
\begin{cases}
u_{t}+k u^{2}u_{x}=-\frac{k}{2}\Lambda^{-2}\left(u_{x}^{3}-u_{x}\rho^{2}\right)
-\partial_{x}\Lambda^{-2}\left(g(u)+\frac{3k}{2}uu_{x}^{2}-\frac{k}{3}u^{3}
-\frac{k}{2}u\rho^{2}\right)-\lambda u,\\
\rho_{t}+k u^{2}\rho_{x}=-k \rho uu_{x},
\label{202}
\end{cases}
\end{equation}
where $\Lambda^{-2}=(1-\partial_{x}^{2})^{-1}$.
%or in the equivalent form
%\begin{equation}
%\begin{cases}
%u_{t}+k u^{2}u_{x}=-\frac{k}{2}P\ast\left(u_{x}^{3}-u_{x}\rho^{2}\right)
%-\partial_{x}P\ast\left(g(u)+\frac{3k}{2}uu_{x}^{2}-\frac{k}{3}u^{3}
%-\frac{k}{2}u\rho^{2}\right)-\lambda u,\\
%\rho_{t}+k u^{2}\rho_{x}=-k \rho uu_{x},
%\label{203a}
%\end{cases}
%\end{equation}
%In what follows, we provide the framework in which we shall reformulate  \eqref{101}. With $y=u-u_{xx}$, \eqref{101} is reduced to
%\begin{equation}
%\begin{cases}
%y_{t}+k u^{2}y_{x}+3k uu_{x}y=-\left(g(u)-\frac{4k}{3}u^{3}\right)_{x}-\lambda y+k \rho(u\rho)_{x},\\
%\rho_{t}+k u^{2}\rho_{x}+k \rho uu_{x}=0,
%\label{202}
%\end{cases}
%\end{equation}
%Set $P(x):=\frac{1}{2}e^{-|x|}$, then we have
%$$(1-\partial_{x}^{2})^{-1}y=\Lambda^{-2}y=P\ast y, \ \ \forall y\in L^{2}(\mathbb{R}),$$
%and $P\ast y=u$, where $\ast$ denotes the spatial convolution operator. Thus, by using the above identities, we can rewrite \eqref{202} as
%\begin{equation}
%\begin{cases}
%u_{t}+k u^{2}u_{x}=-\frac{k}{2}\Lambda^{-2}\left(u_{x}^{3}-u_{x}\rho^{2}\right)
%-\partial_{x}\Lambda^{-2}\left(g(u)+\frac{3k}{2}uu_{x}^{2}-\frac{k}{3}u^{3}
%-\frac{k}{2}u\rho^{2}\right)-\lambda u,\\
%\rho_{t}+k u^{2}\rho_{x}=-k \rho uu_{x},
%\label{203a}
%\end{cases}
%\end{equation}
%or in the equivalent form
%\begin{equation}
%\begin{cases}
%u_{t}+k u^{2}u_{x}=-\frac{k}{2}P\ast\left(u_{x}^{3}-u_{x}\rho^{2}\right)
%-\partial_{x}P\ast\left(g(u)+\frac{3k}{2}uu_{x}^{2}-\frac{k}{3}u^{3}
%-\frac{k}{2}u\rho^{2}\right)-\lambda u,\\
%\rho_{t}+k u^{2}\rho_{x}=-k \rho uu_{x},
%\label{203a}
%\end{cases}
%\end{equation}
Denote $z:=\left(
\begin{array}{cc}  u\\ \rho \end{array} \right)$, $A(z):=\left(
\begin{array}{cc}  k u^{2}\partial_{x}&0\\ 0&k u^{2}\partial_{x}\end{array} \right)$,
\begin{align*}
f(z):=
\left(
\begin{array}{cc}  -\frac{k}{2}\Lambda^{-2}\left(u_{x}^{3}-u_{x}\rho^{2}\right)
-\partial_{x}\Lambda^{-2}\left(g(u)+\frac{3k}{2}uu_{x}^{2}-\frac{k}{3}u^{3}
-\frac{k}{2}u\rho^{2}\right)-\lambda u\\
-k\rho uu_{x}\end{array} \right),
\label{204}
\end{align*}
$Y=H^{s}\times H^{s-1}, X=H^{s-1}\times H^{s-2}, \Lambda=(1-\partial_{x}^{2})^{\frac{1}{2}}$ and $Q=\left(
\begin{array}{cc}  \Lambda,&0\\ 0,&\Lambda\end{array} \right)$. It is obvious that $Q$ is an isomorphism of $H^{s}\times H^{s-1}$ onto $H^{s-1}\times H^{s-2}$.

In what follows, we prove the local well-posedness of solutions by using Kato's theorem.
To this end, we need the following lemmas.
\begin{lemma}[\cite{Pazy1983}]
\label{lem202}
Let $X$ and $Y$ be two Banach spaces and $Y$ be continuously and densely embedded
in $X$. Let $-A$ be the infinitesimal generator of the $C_0$-semigroup $T(t)$
on $X$ and $S$ be an isomorphism from $Y$ onto $X$. Then $Y$ is $A$-admissible
 if and only if $-A_1=-SAS^{-1}$ is the infinitesimal generator of the
 $C_0$-semigroup $T_1(t)=STS^{-1}$ on $X$. Moreover, if $Y$ is $A$-admissible,
 then the part of $-A$ in $Y$ is the infinitesimal generator of the restriction
 of $T(t)$ to $Y$.
\end{lemma}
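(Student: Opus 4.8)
The plan is to read this statement as a transference principle: the isomorphism $S$ identifies $Y$ with $X$, and conjugation by $S$ carries $C_{0}$-semigroups on $Y$ to $C_{0}$-semigroups on $X$ while transforming their generators covariantly, $G\mapsto SGS^{-1}$. Recall that $Y$ is called $A$-admissible if $T(t)Y\subseteq Y$ for every $t\ge 0$ and the restrictions $T(t)|_{Y}$ form a $C_{0}$-semigroup on $(Y,\|\cdot\|_{Y})$, and that the part of $-A$ in $Y$ is the operator acting as $-A$ on the domain $\{y\in Y\cap D(A):Ay\in Y\}$. The engine of the whole proof is a single \emph{conjugation lemma}: if $U(t)$ is a $C_{0}$-semigroup on a Banach space $Z_{1}$ with generator $G$, and $S:Z_{1}\to Z_{2}$ is an isomorphism, then $V(t):=SU(t)S^{-1}$ is a $C_{0}$-semigroup on $Z_{2}$ with generator $SGS^{-1}$ on the domain $S(D(G))$. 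I would establish this once: the law $V(t+s)=V(t)V(s)$ and $V(0)=I$ are immediate from $S^{-1}S=I$; strong continuity of $V$ transfers from that of $U$ because $S$ and $S^{-1}$ are bounded; and differentiating $V(t)x=SU(t)S^{-1}x$ at $t=0^{+}$, passing the bounded operator $S$ through the limit, pins down both the generator and its domain.

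With the lemma in hand the equivalence splits into two symmetric implications. For the forward direction, assume $Y$ is $A$-admissible and write $T_{Y}(t):=T(t)|_{Y}$, a $C_{0}$-semigroup on $Y$ whose generator I denote $-A_{Y}$. Applying the conjugation lemma with $Z_{1}=Y$, $Z_{2}=X$ and the given $S$ shows that $T_{1}(t)=ST_{Y}(t)S^{-1}=ST(t)S^{-1}$ is a $C_{0}$-semigroup on $X$ with generator $-SA_{Y}S^{-1}$; since $A_{Y}$ is a restriction of $A$, this operator acts as $-SAS^{-1}=-A_{1}$, which is exactly the asserted conclusion. For the reverse direction, assume $T_{1}(t)=ST(t)S^{-1}$ is a well-defined $C_{0}$-semigroup on $X$. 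Because $S^{-1}X=Y$ while $S$ is defined only on $Y$, the mere meaningfulness of $ST(t)S^{-1}$ already forces $T(t)Y\subseteq Y$, so $Y$ is invariant. Then $T(t)|_{Y}=S^{-1}T_{1}(t)S$, and the conjugation lemma applied to the isomorphism $S^{-1}:X\to Y$ shows that these restrictions form a $C_{0}$-semigroup on $Y$, i.e. $Y$ is $A$-admissible.

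For the concluding clause I would invoke the standard fact that whenever a $C_{0}$-semigroup leaves $Y$ invariant and restricts to a $C_{0}$-semigroup there, the generator of the restriction is precisely the part of the generator in $Y$; this identifies the operator $-A_{Y}$ appearing above with the part of $-A$ in $Y$. The main obstacle is not the algebra but the bookkeeping of domains: one must verify that the generator produced by the conjugation lemma, carried on the domain $S(D(A_{Y}))$, genuinely coincides with $-A_{1}=-SAS^{-1}$ on the intended domain, and that the part of $-A$ in $Y$ is densely defined (a core) so that the restricted family really is a $C_{0}$-semigroup rather than merely a locally bounded one. Resolving these points rests squarely on the continuous and dense embedding $Y\hookrightarrow X$ together with the boundedness of $S$ and $S^{-1}$, which are exactly the hypotheses that allow limits taken in one space to be transported faithfully to the other.
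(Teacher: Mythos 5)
The paper itself offers no proof of this lemma: it is quoted from Pazy's book and then applied in the proof of Lemma \ref{lem207}, so your attempt has to be measured against the standard (Pazy/Kato) argument and against that application. Your conjugation lemma and your forward implication are sound: conjugating the restricted semigroup by $S$ gives a $C_0$-semigroup on $X$ with generator $-SA_YS^{-1}$, and the ``standard fact'' you invoke (the generator of the restriction equals the part of $-A$ in $Y$) is true — it follows from the integral identity $T(t)y-y=\int_0^t T(s)(-A)y\,ds$, which converges in $Y$ when $-Ay\in Y$ because the restricted semigroup is strongly continuous in $Y$ — and it is exactly what identifies $D(SA_YS^{-1})$ with the natural domain of $SAS^{-1}$.

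The genuine gap is in your reverse implication. You read the hypothesis as ``$ST(t)S^{-1}$ is a well-defined $C_0$-semigroup on $X$,'' from which the invariance $T(t)Y\subseteq Y$ is immediate; but well-definedness of $ST(t)S^{-1}$ \emph{is} the invariance of $Y$, so under this reading the hard direction has been assumed rather than proved, and the resulting statement is useless in precisely the situation it is designed for. In the paper's application (part (b) of Lemma \ref{lem207}) one knows only that $-A_1(z)=-QA(z)Q^{-1}=-(A(z)+B_1(z))$ generates \emph{some} $C_0$-semigroup on $L^2\times L^2$, obtained from the bounded-perturbation theorem; nothing is known a priori about $T(t)$ preserving $H^{s-1}\times H^{s-2}$. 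The correct hypothesis is that $-A_1=-SAS^{-1}$, with its natural domain $S\bigl(\{y\in Y\cap D(A):Ay\in Y\}\bigr)$, generates some $C_0$-semigroup $T_1(t)$ on $X$, and the missing idea is a uniqueness argument for the abstract Cauchy problem: by your own conjugation lemma applied to $S^{-1}:X\to Y$, $W(t):=S^{-1}T_1(t)S$ is a $C_0$-semigroup on $Y$ whose generator is the part $-\tilde A$ of $-A$ in $Y$; for $y\in D(\tilde A)$ the map $u(t)=W(t)y$ is $C^1$ into $Y$, hence into $X$ (continuous embedding), and solves $u'=-Au$, $u(0)=y$ in $X$, so uniqueness of solutions for the generator $-A$ forces $W(t)y=T(t)y$; density of $D(\tilde A)$ in $Y$ then gives $T(t)|_Y=W(t)$ on all of $Y$. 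This simultaneously yields $T(t)Y\subseteq Y$, the identity $T_1(t)=ST(t)S^{-1}$ (now as a conclusion), and the ``moreover'' clause. Without this step your proof establishes a strictly weaker statement than the lemma the paper actually uses.
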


\begin{lemma}[\cite{Kato}]
\label{lem203}  Let $r, \tau$ be real numbers such that $-r<\tau\leq r$. Then
\begin{eqnarray*}
\|fg\|_{\tau}\leq c\|f\|_{r}\|g\|_{\tau}, \ if\ r>1/2,
\\
\|fg\|_{r+\tau-\frac{1}{2}}\leq c\|f\|_{r}\|g\|_{\tau}, \ if\ r<1/2,
\end{eqnarray*}
where $c$ is a positive constant depending only on $r$ and $\tau$.
\end{lemma}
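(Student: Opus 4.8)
The plan is to pass to the Fourier side and recognize both inequalities as instances of the Sobolev multiplication theorem, which I would prove by a Littlewood--Paley (Bony paraproduct) decomposition. Writing $\widehat{fg}(\xi)=c\int\widehat{f}(\xi-\eta)\widehat{g}(\eta)\,d\eta$ and $\langle\xi\rangle=(1+\xi^{2})^{1/2}$, so that $\|f\|_{s}=\|\langle\cdot\rangle^{s}\widehat{f}\|_{L^{2}}$, I decompose $f=\sum_{j}\Delta_{j}f$, $g=\sum_{k}\Delta_{k}g$ into dyadic blocks frequency-localized at $|\xi|\sim 2^{j}$ and split the product into the two paraproducts and the resonant remainder
$$fg=\underbrace{\sum_{j}S_{j-1}f\,\Delta_{j}g}_{T_{f}g}+\underbrace{\sum_{k}S_{k-1}g\,\Delta_{k}f}_{T_{g}f}+\underbrace{\sum_{|j-k|\le 2}\Delta_{j}f\,\Delta_{k}g}_{R(f,g)}.$$
Setting $a_{j}=2^{jr}\|\Delta_{j}f\|_{L^{2}}$ and $b_{k}=2^{k\tau}\|\Delta_{k}g\|_{L^{2}}$, one has $\|(a_{j})\|_{\ell^{2}}\sim\|f\|_{r}$ and $\|(b_{k})\|_{\ell^{2}}\sim\|g\|_{\tau}$, and the whole argument reduces to summing the dyadic bounds in $\ell^{2}$.

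For the two paraproducts each block is frequency-localized near $2^{j}$, so the one-dimensional Bernstein inequality $\|\Delta_{k}h\|_{L^{\infty}}\lesssim 2^{k/2}\|\Delta_{k}h\|_{L^{2}}$ closes the scale-by-scale estimates cleanly. When $r>1/2$ the embedding $H^{r}\hookrightarrow L^{\infty}$ controls the low-frequency factor of $T_{f}g$ uniformly, giving $\|T_{f}g\|_{\tau}\lesssim\|f\|_{L^{\infty}}\|g\|_{\tau}\lesssim\|f\|_{r}\|g\|_{\tau}$; the symmetric term $T_{g}f$ lands in $H^{r}$ when $\tau>1/2$ and otherwise in $H^{r+\tau-1/2}$, and in either case $r>1/2$ forces this space into $H^{\tau}$ since $r+\tau-1/2\ge\tau$. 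This yields the paraproduct part of the first inequality. The negative-$\tau$ range can be obtained from the positive one by the duality $\|fg\|_{\tau}=\sup_{\|h\|_{-\tau}\le 1}|\langle g,fh\rangle|\le\|g\|_{\tau}\|fh\|_{-\tau}$ together with $\|fh\|_{-\tau}\lesssim\|f\|_{r}\|h\|_{-\tau}$, which is the already-proved case with output index $-\tau\in(0,r)$.

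The crux is the resonant term $R(f,g)$, where $f$ and $g$ sit at comparable high frequencies $2^{j}\sim 2^{k}$ and the product spreads over all frequencies $\lesssim 2^{j}$: here Bernstein costs a factor $2^{j/2}$, namely $\|\Delta_{j}f\,\Delta_{k}g\|_{L^{2}}\lesssim 2^{j/2}2^{-jr}2^{-k\tau}a_{j}b_{k}$, and a Cauchy--Schwarz (Schur-type) summation over the scales — performed by duality against $H^{1/2-r-\tau}$ when the target index is negative — gives $\|R(f,g)\|_{r+\tau-1/2}\lesssim\|f\|_{r}\|g\|_{\tau}$. The exact cancellation of the powers of $2^{j}$ is precisely what pins the gained/lost index at $r+\tau-\tfrac12$ and forces $r+\tau>0$, i.e. the hypothesis $\tau>-r$, for convergence of the low-frequency piles. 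For $r>1/2$ one has $r+\tau-1/2\ge\tau$, so $R(f,g)\in H^{r+\tau-1/2}\hookrightarrow H^{\tau}$ and the first inequality follows; for $r<1/2$ the remainder is the worst of the three contributions and pins the output regularity at $r+\tau-1/2$, while the paraproducts lie in $H^{\tau}\hookrightarrow H^{r+\tau-1/2}$, giving the second inequality. The main difficulty throughout is that naive Young-type convolution bounds are not sharp at the endpoints $\tau\approx r\approx 1/2$; the decisive point is to carry out the summation over dyadic scales in $\ell^{2}$ (with duality handling negative target indices), which is exactly what the Littlewood--Paley framework makes transparent.
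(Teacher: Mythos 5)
The paper offers no proof of this lemma: it is quoted verbatim from Kato's 1975 paper on quasi-linear evolution equations, so there is no internal argument to compare against. Your Littlewood--Paley/Bony paraproduct proof is a legitimate and essentially correct route to the statement; it differs from Kato's original approach, which works directly on the Fourier side, estimating the convolution $\widehat{fg}$ via the elementary inequality $\langle\xi\rangle^{\tau}\lesssim\langle\xi-\eta\rangle^{|\tau|}\langle\eta\rangle^{\tau}$ and a Schur/Cauchy--Schwarz test on the kernel $\langle\xi\rangle^{\tau}\langle\xi-\eta\rangle^{-r}\langle\eta\rangle^{-\tau}$ split over the regions $|\xi-\eta|\le|\eta|$ and $|\xi-\eta|\ge|\eta|$ (this is where the integrability condition $r>1/2$, respectively the loss $r+\tau-\tfrac12$ and the constraint $r+\tau>0$, emerge). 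Your decomposition buys a transparent accounting of exactly these three phenomena through $T_{f}g$, $T_{g}f$ and $R(f,g)$, and you correctly identify the two genuinely delicate points: the resonant term forcing $r+\tau>0$ (i.e.\ $\tau>-r$) and the need for duality when the target index is negative. Both are handled properly in your sketch.

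Two local inaccuracies, neither fatal. First, in the regime $r<1/2$ your parenthetical claim that ``the paraproducts lie in $H^{\tau}$'' is false for $T_{f}g$: since $H^{r}\not\hookrightarrow L^{\infty}$ for $r<1/2$, the low-frequency factor obeys only $\|S_{j-1}f\|_{L^{\infty}}\lesssim 2^{j(1/2-r)}\|f\|_{r}$, so $T_{f}g$ lands directly in $H^{r+\tau-1/2}$, not in $H^{\tau}$; the final conclusion is unchanged because this is exactly the Bernstein scale-by-scale bound you already set up, but the stated intermediate containment is wrong as written. Second, at the endpoint $\tau=1/2$ (relevant only to the first inequality, where $\tau\le r$ and $r>1/2$) the estimate $\|S_{k-1}g\|_{L^{\infty}}\lesssim 2^{k(1/2-\tau)}\|g\|_{\tau}$ degenerates to a logarithmic loss $\|S_{k-1}g\|_{L^{\infty}}\lesssim k^{1/2}\|g\|_{1/2}$; this is absorbed by the strict gap $\tau-r<0$ in the subsequent $\ell^{2}$ summation, a point you allude to but do not carry out. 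With these two repairs, both stated inequalities follow from your scheme for the full range $-r<\tau\le r$.
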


\begin{lemma}[\cite{Kato1}]
\label{lem204}  Let $f\in H^{s}, s>3/2$. Then
\begin{align*}
\|\Lambda^{-r}[\Lambda^{r+m+1},M_{f}]\Lambda^{-m}\|_{L(L^{2})}\leq c\|f\|_{H^{s}},\ |r|,|m|\leq s-1,
\end{align*}
where $M_{f}$ is the operator of multiplication by $f$ and $c$ is a constant depending only on $r$ and $m$.
\end{lemma}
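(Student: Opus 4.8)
The plan is to regard $T:=\Lambda^{-r}[\Lambda^{r+m+1},M_{f}]\Lambda^{-m}$ as a Fourier integral operator and to prove directly that it is bounded on $L^{2}$ with norm controlled by $\|f\|_{s}$. Writing $\langle\xi\rangle:=(1+\xi^{2})^{1/2}$ and $\sigma:=r+m+1$, so that $\Lambda^{\tau}$ is the Fourier multiplier with symbol $\langle\xi\rangle^{\tau}$ and $\widehat{M_{f}u}=c\,\widehat f*\widehat u$, a computation on the Fourier side gives
\begin{equation*}
\widehat{Tu}(\xi)=c\int_{\mathbb{R}}K(\xi,\eta)\,\widehat u(\eta)\,d\eta,\qquad
K(\xi,\eta)=\langle\xi\rangle^{-r}\big(\langle\xi\rangle^{\sigma}-\langle\eta\rangle^{\sigma}\big)\langle\eta\rangle^{-m}\,\widehat f(\xi-\eta).
\end{equation*}
Everything then reduces to an $L^{2}(d\eta)\to L^{2}(d\xi)$ bound for the integral operator with kernel $K$, and the conceptual point is that the multiplier
\begin{equation*}
m_{0}(\xi,\eta):=\langle\xi\rangle^{-r}\big(\langle\xi\rangle^{\sigma}-\langle\eta\rangle^{\sigma}\big)\langle\eta\rangle^{-m}
\end{equation*}
is of order zero: the difference $\langle\xi\rangle^{\sigma}-\langle\eta\rangle^{\sigma}$ costs one less power than $\langle\xi\rangle^{\sigma}$, and this gained derivative is exactly what cancels the $-r-m$ deficit produced by the two flanking factors $\langle\xi\rangle^{-r}$ and $\langle\eta\rangle^{-m}$.

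The quantitative heart is a pointwise estimate for $m_{0}$. I would record the mean value inequality $|\langle\xi\rangle^{\sigma}-\langle\eta\rangle^{\sigma}|\le C|\langle\xi\rangle-\langle\eta\rangle|\,(\langle\xi\rangle+\langle\eta\rangle)^{\sigma-1}$ together with $|\langle\xi\rangle-\langle\eta\rangle|\le|\xi-\eta|$, and Peetre's inequality $\langle\xi\rangle^{t}\le 2^{|t|}\langle\eta\rangle^{t}\langle\xi-\eta\rangle^{|t|}$. On the diagonal region $\langle\xi\rangle\sim\langle\eta\rangle$ these combine to give the favourable bound $|m_{0}(\xi,\eta)|\le C\langle\xi-\eta\rangle$, in which the entire gained derivative has become a single power of the frequency $\xi-\eta$ of $f$; since $s>3/2$ one has $\int_{\mathbb{R}}\langle\zeta\rangle|\widehat f(\zeta)|\,d\zeta\le C\|f\|_{s}$ by Cauchy--Schwarz, so this part of the kernel is integrable and a Schur test closes it. In the off-diagonal regions, where one of $\langle\xi\rangle,\langle\eta\rangle$ dominates and $\langle\xi-\eta\rangle\sim\max(\langle\xi\rangle,\langle\eta\rangle)$, the same inputs only yield $|m_{0}(\xi,\eta)|\le C\langle\xi-\eta\rangle^{1+\max(|r|,|m|)}$, which can be as large as $\langle\xi-\eta\rangle^{s}$.

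This last exponent is where the genuine difficulty lies, and I expect it to be the main obstacle: a crude Schur/Young estimate applied to $\langle\xi-\eta\rangle^{s}\widehat f(\xi-\eta)$ would require $\langle\cdot\rangle^{s}\widehat f\in L^{1}$, i.e.\ $f\in H^{s+1/2+\varepsilon}$, losing half a derivative relative to the claimed $f\in H^{s}$. To recover the sharp regularity one cannot bound the kernel by absolute values alone; instead I would decompose $f=\sum_{j}\Delta_{j}f$ and $u=\sum_{k}\Delta_{k}u$ into Littlewood--Paley blocks, split $Tu$ into the low-high, high-high, and high-low interactions, and sum using the almost-orthogonality (Bessel / $\ell^{2}$ duality) of the dyadic pieces rather than the triangle inequality. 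The high-high interaction is precisely the diagonal region handled above, while in the two unbalanced interactions the dominating factor $\langle\xi-\eta\rangle^{\alpha}$, $\alpha\le s$, is paired against the matching weight $\langle\eta\rangle^{-m}$ or $\langle\xi\rangle^{-r}$, and distributing the weights so that $\langle\cdot\rangle^{s}\widehat f\in L^{2}$ (controlled by $\|f\|_{s}$) is tested against $\widehat u\in L^{2}$ recovers exactly $\|f\|_{s}\|u\|_{0}$. Equivalently, one may phrase the argument through symbolic calculus, viewing $m_{0}$ as a (limited-regularity) symbol of order zero whose relevant seminorms are dominated by $\|f_{x}\|_{H^{s-1}}\le C\|f\|_{s}$ via $H^{s-1}\hookrightarrow L^{\infty}$, and invoking the Calder\'on--Vaillancourt theorem after a paraproduct reorganisation. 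Either route delivers the estimate with $c=c(r,m)$, and the hypotheses $s>3/2$ and $|r|,|m|\le s-1$ are precisely what the derivative counting demands.
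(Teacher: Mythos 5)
The paper gives no proof of this lemma at all: it is quoted verbatim from Kato's KdV paper \cite{Kato1}, where it is an appendix lemma established precisely by the Fourier-kernel analysis you set up, so your proposal has to be judged as a reconstruction of the cited result rather than against an in-paper argument. Judged that way, your first half is correct and complete: the reduction to the kernel $K(\xi,\eta)=\langle\xi\rangle^{-r}\bigl(\langle\xi\rangle^{\sigma}-\langle\eta\rangle^{\sigma}\bigr)\langle\eta\rangle^{-m}\widehat f(\xi-\eta)$ is the right starting point; on the diagonal region $|\xi-\eta|\le\tfrac12\langle\eta\rangle$ your mean-value-plus-Peetre bound $|m_{0}|\lesssim\langle\xi-\eta\rangle$ and the Schur/Young step via $\int\langle\zeta\rangle|\widehat f(\zeta)|\,d\zeta\lesssim\|f\|_{s}$ (Cauchy--Schwarz, using exactly $s>3/2$) are correct --- with the small caveat that your mean-value inequality as written holds only for $\sigma\ge1$, whereas $\sigma=r+m+1$ can be negative; one should instead bound by $|\xi-\eta|\sup_{t\in[\xi,\eta]}\langle t\rangle^{\sigma-1}$, which on the diagonal region is still comparable to $\langle\eta\rangle^{\sigma-1}$, so nothing breaks. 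Your diagnosis of the unbalanced region is also exactly right: the crude bound $\langle\xi-\eta\rangle^{1+\max(|r|,|m|)}\le\langle\xi-\eta\rangle^{s}$ defeats any pure $L^{1}$-Schur argument, and that is where all the remaining work sits.

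The weakness is that this decisive case is asserted rather than executed (``distributing the weights \dots recovers exactly $\|f\|_{s}\|u\|_{0}$''). Your plan does close, and for completeness it should be written out; here is the counting that confirms it. When the flanking exponent exceeds $1/2$ --- say $\langle\xi\rangle\sim\langle\xi-\eta\rangle\gg\langle\eta\rangle$ and $m>1/2$ --- no dyadic decomposition is needed: the kernel is dominated by $a(\xi-\eta)\,b(\eta)$ with $a=\langle\cdot\rangle^{m+1}|\widehat f|\in L^{2}$ (since $m+1\le s$, which is precisely where $|m|\le s-1$ enters) and $b=\langle\cdot\rangle^{-m}\in L^{2}$, and Young's inequality $\|a\ast(bu)\|_{L^{2}}\le\|a\|_{L^{2}}\|b\|_{L^{2}}\|u\|_{L^{2}}$ finishes. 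For $|m|\le1/2$ (where $b\notin L^{2}$ and naive Schur would need $m+1<s-\tfrac12$, which genuinely fails for $s\le2$) your Littlewood--Paley route is necessary: with $v=\Lambda^{-m}u$, Bernstein gives $\|S_{j}v\|_{L^{\infty}}\lesssim 2^{j(1/2-m)}\|v\|_{H^{m}}$, so the high--low block at frequency $2^{j}$ of $f$ contributes $2^{j(m+1)}2^{j(1/2-m)}\|\Delta_{j}f\|_{L^{2}}=2^{3j/2}\|\Delta_{j}f\|_{L^{2}}$, and $\ell^{2}$-summation against $\bigl(2^{js}\|\Delta_{j}f\|_{L^{2}}\bigr)_{j}$ yields $\|f\|_{H^{3/2}}\|u\|_{L^{2}}\le\|f\|_{s}\|u\|_{L^{2}}$; the total count $(m+1)+(\tfrac12-m)=\tfrac32$ is exactly your ``derivative counting'' and shows why $s>3/2$ is sharp here. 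The symmetric constraint on $r$ comes for free, since the adjoint of $\Lambda^{-r}[\Lambda^{\sigma},M_{f}]\Lambda^{-m}$ is $-\Lambda^{-m}[\Lambda^{\sigma},M_{f}]\Lambda^{-r}$, swapping $r\leftrightarrow m$. With these two mechanisms made explicit your proposal becomes a correct, self-contained proof; as submitted it is a correct strategy whose hardest third is a plan rather than an estimate.
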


\begin{lemma}[\cite{Kato1983}]
\label{lem205}  Let $h\in C^{l}(\mathbb{R},\mathbb{R})$ and $h(0)=0$. Then
$$\|h(u)\|_{r}\leq \widetilde{h}(\|u\|_{r}),\ 1/2<r\leq l.$$
Moreover, if $h\in C^{\infty}(\mathbb{R},\mathbb{R})$ with $h(0)=0$, then
$$\|h(u)\|_{r}\leq \widetilde{h}(\|u\|_{r}),\ r>1/2,$$
where $\widetilde{h}$ is a monotone increasing function depending only on the function $h$.
\end{lemma}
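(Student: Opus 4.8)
The plan is to read this as a Moser-type composition (Nemytskii operator) estimate and to exploit that $H^{r}$ is a Banach algebra for $r>1/2$. The starting point is the Sobolev embedding $H^{r}\hookrightarrow L^{\infty}$, valid precisely because $r>1/2$: it gives $\|u\|_{L^{\infty}}\le C_{0}\|u\|_{r}$, so setting $R:=C_{0}\|u\|_{r}$ the quantities $M_{j}:=\sup_{|s|\le R}|h^{(j)}(s)|$ are finite for $0\le j\le l$ (resp.\ for all $j$ in the $C^{\infty}$ case) and each $M_{j}$ is a monotone increasing function of $\|u\|_{r}$. These $M_{j}$, together with purely combinatorial constants, will furnish the profile $\widetilde{h}$. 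Note that the hypothesis $h(0)=0$ is exactly what places $h(u)$ in $L^{2}$: it yields the pointwise bound $|h(u)|\le M_{1}|u|$ and hence $\|h(u)\|_{L^{2}}\le M_{1}\|u\|_{L^{2}}$.

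First I would treat the range $1/2<r\le 1$ directly. Using the Gagliardo double-integral representation of the homogeneous seminorm, the mean value theorem gives the Lipschitz bound $|h(u(x))-h(u(y))|\le M_{1}|u(x)-u(y)|$, whence
\[
\|h(u)\|_{\dot{H}^{r}}\le M_{1}\,\|u\|_{\dot{H}^{r}},\qquad 0<r<1,
\]
and the endpoint $r=1$ is the elementary chain-rule bound $\|\partial_{x}h(u)\|_{L^{2}}=\|h'(u)\,u_{x}\|_{L^{2}}\le M_{1}\|u_{x}\|_{L^{2}}$. Combining with the $L^{2}$ estimate above closes this case.

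Next I would handle $r>1$ by writing $r=m+\sigma$ with $m=\lfloor r\rfloor\ge 1$ and $0\le\sigma<1$, controlling $\|h(u)\|_{r}$ through $\|h(u)\|_{L^{2}}$ and $\|\partial_{x}^{m}h(u)\|_{\dot{H}^{\sigma}}$. The Fa\`a di Bruno formula expands $\partial_{x}^{m}h(u)$ as a finite sum of terms $h^{(p)}(u)\prod_{i=1}^{p}\partial_{x}^{j_{i}}u$ with $\sum_{i}j_{i}=m$ and $p\le m$. Each summand is a product, so I would estimate its $H^{\sigma}$ norm by iterating the product/algebra estimate of Lemma \ref{lem203} together with Gagliardo--Nirenberg interpolation, which distributes the $m$ derivatives and the extra $\sigma$ among the factors while trading all but one factor for $L^{\infty}$ norms bounded by $\|u\|_{L^{\infty}}\le C_{0}\|u\|_{r}$. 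The scalar prefactor $h^{(p)}(u)$ contributes $M_{p}$ in $L^{\infty}$, and its fractional derivative is controlled by the $1/2<r\le 1$ estimate applied to $h^{(p)}$ (here one checks the regularity bookkeeping: when $\sigma>0$ one has $r<l$, hence $p\le m\le l-1$, so $h^{(p)}\in C^{1}$ and the fractional bound is available). Summing over the finitely many terms produces a bound $\|h(u)\|_{r}\le P(M_{1},\dots,M_{m})\,Q(\|u\|_{r})$ with $P,Q$ polynomial, which I would absorb into a single monotone profile $\widetilde{h}(\|u\|_{r})$.

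The $C^{\infty}$ statement for arbitrary $r>1/2$ is then not a separate argument: since $h^{(j)}$ exists for every $j$, the Fa\`a di Bruno expansion is available for every $m=\lfloor r\rfloor$, and the same scheme applies verbatim. The step I expect to be the main obstacle is the fractional part, namely estimating the $\dot{H}^{\sigma}$ norm of the products $h^{(p)}(u)\prod\partial_{x}^{j_{i}}u$ for $\sigma\in(0,1)$: the fractional derivative obeys no exact Leibniz rule, so one must invoke fractional Leibniz/paraproduct estimates and argue carefully that the resulting constant depends only on $h$ through the $M_{j}$ and is monotone in $\|u\|_{r}$, so that a single increasing $\widetilde{h}$ can be extracted.
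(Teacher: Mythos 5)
The paper offers no proof of this lemma: it is imported verbatim as a known composition (Moser--Nemytskii) estimate with a citation to Kato's 1983 paper, and is then used as a black box (e.g.\ to bound $\|g(u)\|_{s-1}$ in Lemma 2.10 and in the proof of Theorem 3.1). Your proposal therefore goes beyond the paper by reconstructing a proof, and the route you pick --- Sobolev embedding $H^{r}\hookrightarrow L^{\infty}$ to localize $h$ to the compact range $|s|\le C_{0}\|u\|_{r}$ (making the $M_{j}$ finite and monotone in $\|u\|_{r}$), the pointwise bound $|h(u)|\le M_{1}|u|$ from $h(0)=0$ for the $L^{2}$ part, the Gagliardo double-integral seminorm for $1/2<r\le 1$, and Fa\`a di Bruno combined with Gagliardo--Nirenberg and fractional Leibniz estimates for $r>1$ --- is the standard classical argument, essentially the one found in the cited literature. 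Your regularity bookkeeping is also right: when $\sigma=r-\lfloor r\rfloor>0$ and $r\le l$ with $l$ an integer, indeed $\lfloor r\rfloor\le l-1$, so the derivatives $h^{(p)}$, $p\le\lfloor r\rfloor$, retain one degree of smoothness and the fractional estimate applies to them; and when $r$ is an integer no fractional part arises, so only $\sup|h^{(p)}|$, $p\le r\le l$, is needed. So the proposal is correct in outline, and the comparison is simply citation-as-black-box versus a self-contained standard proof.

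Two points in your sketch do need repair at the detailed level, though both are fixable by the tools you already name. First, you cannot literally trade ``all but one factor for $L^{\infty}$ norms bounded by $\|u\|_{L^{\infty}}$'': for a Fa\`a di Bruno term $h^{(p)}(u)\prod_{i}\partial_{x}^{j_{i}}u$ with $\sum_{i}j_{i}=m$, the factors $\partial_{x}^{j_{i}}u$ are in general not in $L^{\infty}$ at all when $u\in H^{r}$ only. The correct bookkeeping is H\"older with exponents $p_{i}=2m/j_{i}$ (so $\sum_{i}1/p_{i}=1/2$) together with the Gagliardo--Nirenberg bounds $\|\partial_{x}^{j_{i}}u\|_{L^{2m/j_{i}}}\le C\,\|u\|_{L^{\infty}}^{1-j_{i}/m}\|\partial_{x}^{m}u\|_{L^{2}}^{j_{i}/m}$, which yields a product bound of the form $\|u\|_{L^{\infty}}^{p-1}\|u\|_{r}$ --- still monotone in $\|u\|_{r}$, as required for $\widetilde h$. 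Second, for $\sigma>0$ the prefactor $h^{(p)}(u)$ need not belong to $H^{\sigma}$, since $h^{(p)}(0)$ may be nonzero (so $h^{(p)}(u)\notin L^{2}$ in general); your appeal to the $1/2<r\le 1$ case must be read at the level of the homogeneous seminorm, where the double-integral characterization gives both the Leibniz-type bound $\|fg\|_{\dot H^{\sigma}}\le\|f\|_{L^{\infty}}\|g\|_{\dot H^{\sigma}}+\|g\|_{L^{\infty}}\|f\|_{\dot H^{\sigma}}$ and $\|h^{(p)}(u)\|_{\dot H^{\sigma}}\le M_{p+1}\|u\|_{\dot H^{\sigma}}$ with no vanishing condition on $h^{(p)}$ at $0$; alternatively, split off the constant $h^{(p)}(0)$ before estimating. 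With these corrections your scheme closes and produces a single increasing profile $\widetilde h$, consistent with the statement the paper quotes.
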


In what follows, we will verify that $A(z)$ and $f(z)$ satisfy the conditions (i)--(iii) in Kato's theorem.

\begin{lemma}
\label{lem206}  The operator $A(z):=\left(
\begin{array}{cc}k u^{2}\partial_{x}&0\\ 0&k u^{2}\partial_{x}\end{array} \right)$ with $z\in H^{s}\times H^{s-1} (s>\frac{3}{2})$, belongs to $G(L^{2}\times L^{2},1,\beta).$
\end{lemma}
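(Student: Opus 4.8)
The plan is to exploit the block-diagonal structure of $A(z)$ and reduce the claim to the single scalar operator $L:=k u^{2}\partial_{x}$ acting on $L^{2}(\mathbb{R})$ with domain $H^{1}(\mathbb{R})$. Since $A(z)=\mathrm{diag}(L,L)$, once I show $L\in G(L^{2},1,\beta)$ the direct-sum structure immediately gives $A(z)\in G(L^{2}\times L^{2},1,\beta)$ with the same $\beta$: accretivity simply adds across the two components, and the resolvent equation $(\lambda I+A(z))w=f$ decouples into two identical scalar equations. Recall that $L\in G(L^{2},1,\beta)$ means precisely that $L+\beta I$ is $m$-accretive, i.e. (a) $\langle Lw,w\rangle\geq-\beta\|w\|_{L^{2}}^{2}$ for all $w\in H^{1}$, and (b) the range of $\lambda I+L$ is all of $L^{2}$ for some (hence every) $\lambda>\beta$.

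For the quasi-accretivity (a), I would integrate by parts. For real-valued $w\in H^{1}$,
\[
\langle Lw,w\rangle=k\int_{\mathbb{R}}u^{2}w_{x}w\,dx=\frac{k}{2}\int_{\mathbb{R}}u^{2}(w^{2})_{x}\,dx=-k\int_{\mathbb{R}}uu_{x}w^{2}\,dx,
\]
so that $|\langle Lw,w\rangle|\leq|k|\,\|uu_{x}\|_{L^{\infty}}\|w\|_{L^{2}}^{2}$. Setting $\beta:=|k|\,\|u\|_{L^{\infty}}\|u_{x}\|_{L^{\infty}}$ yields $\langle Lw,w\rangle\geq-\beta\|w\|_{L^{2}}^{2}$. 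Because $s>\frac{3}{2}$, the Sobolev embeddings $H^{s}\hookrightarrow L^{\infty}$ and $H^{s-1}\hookrightarrow L^{\infty}$ give $\beta\leq C\|u\|_{s}^{2}$, which is bounded on bounded subsets of $H^{s}\times H^{s-1}$, exactly the uniformity demanded in Kato's condition $(i)$.

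For the range condition (b), the clean route is the Hilbert-space characterization of $m$-accretivity: a closed, densely defined, accretive operator whose adjoint is also accretive is $m$-accretive. I would compute the formal adjoint $L^{*}=-k u^{2}\partial_{x}-2kuu_{x}$ and verify, by the same integration by parts, that $\langle L^{*}w,w\rangle=-k\int_{\mathbb{R}}uu_{x}w^{2}\,dx$ obeys the identical bound, so that $L^{*}+\beta I$ is accretive as well. Then for $\lambda>\beta$ the null space $N(\lambda I+L^{*})$ is trivial (testing $(\lambda I+L^{*})v=0$ against $v$ forces $v=0$), hence $R(\lambda I+L)=N(\lambda I+L^{*})^{\perp}$ is dense; since $L+\beta I$ is accretive and closed, $\lambda I+L$ has closed range, so this range equals $L^{2}$. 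This gives $m$-accretivity of $L+\beta I$ and therefore $L\in G(L^{2},1,\beta)$, from which the diagonal assembly yields the stated conclusion.

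I expect the range condition to be the only genuine obstacle. The accretivity estimate is a one-line integration by parts, but surjectivity of $\lambda I+L$ cannot be obtained by pointwise inversion, because the coefficient $ku^{2}$ degenerates wherever $u$ vanishes, so one cannot simply solve the first-order ODE $\lambda w+ku^{2}w_{x}=f$. Routing surjectivity through the accretivity of the adjoint $L^{*}$ avoids this degeneracy entirely; the only technical point left to confirm is that $L$ is closed on its natural domain $\{w\in L^{2}:ku^{2}w_{x}\in L^{2}\}\supseteq H^{1}$, which follows from $u^{2}\in W^{1,\infty}$.
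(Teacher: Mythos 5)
Your proposal is correct and takes essentially the same route as the paper: the identical integration-by-parts estimate yielding $\beta=|k|\,\|u\|_{L^{\infty}}\|u_{x}\|_{L^{\infty}}\leq |k|\,\|u\|_{s}^{2}$, and the range condition obtained by combining closedness plus accretivity (closed range) with a duality argument through the adjoint (dense range). The only cosmetic differences are that you reduce to the scalar operator $ku^{2}\partial_{x}$ via the diagonal structure and check accretivity of the explicitly computed adjoint $L^{*}=-ku^{2}\partial_{x}-2kuu_{x}$, whereas the paper works with the $2\times 2$ system directly and instead establishes $D(A)=D(A^{*})$ via the Leibniz formula for $u^{2}\in W^{1,\infty}$ before running the same orthogonality/contradiction argument.
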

\begin{proof}
Since $L^{2}\times L^{2}$ is a Hilbert space, we have $A(z)\in G(L^{2}\times L^{2},1,\beta)$ for some real number $\beta$ if and only if the following conditions hold \cite{Kato1983}:\\
(a) $\left(A(z)v,v\right)_{0}\geq -\beta\|v\|_{0}^{2};$\\
(b) The range of $\alpha I+A$ is all of $X$, for all $\alpha>\beta$.

First, we prove (a). Since $z:=\left(
\begin{array}{cc}  u\\ \rho \end{array} \right)\in H^{s}\times H^{s-1} (s>3/2)$, it follows that
$$u\in H^{s},\ \|u\|_{L^{\infty}}\leq \|u\|_{s}\ \text{and}\ \|u_{x}\|_{L^{\infty}}\leq \|u\|_{s}.$$
Then
\begin{align*}
\left|\left(A(z)v,v\right)_{0\times 0} \right|&=\left|(ku^{2}\partial_{x}v_{1},v_{1})_{0}+(ku^{2}\partial_{x}v_{2},v_{2})_{0}\right|\nonumber\\
&\leq |k |\left(|(uu_{x}v_{1},v_{1})_{0}|+|(uu_{x}v_{2},v_{2})_{0}|\right)\nonumber\\
&\leq |k |\|u\|_{L^{\infty}}\|u_{x}\|_{L^{\infty}}(\|v_{1}\|_{0}^{2}+\|v_{2}\|_{0}^{2})\nonumber\\
&\leq |k |\|u\|_{s}^{2}\|v\|_{0\times 0}^{2}.
\end{align*}
Setting $\beta=|k|\|u\|_{s}^{2}$, we have $\left(A(z)v,v\right)_{0}\geq -\beta\|v\|_{0\times0}^{2}.$

Second, we prove (b). Because $A(z)$ is a closed operator and satisfies (a), $(\alpha I+A)$ has closed range in $L^{2}\times L^{2}$ for all $\alpha>\beta$. Therefore, it suffices to prove that $(\alpha I+A)$ has dense range in $L^{2}\times L^{2}$ for all $\alpha>\beta$.

Given $u\in H^{s} (s>3/2)$, and $v_{1}\in L^{2}$, we have the generalized Leibnitz formula
$$\partial_{x}(u^{2}v_{1})=2uu_{x}v_{1}+u^{2}v_{1x}\ \ \text{in}\ \ H^{-1}.$$
Since $u, u_{x}\in L^{\infty}$, we obtain
\begin{align*}
D(A)=D\left(\left(
\begin{array}{cc}  k u^{2}\partial_{x},&0\\ 0,&k  u^{2}\partial_{x}\end{array} \right)\right)=&\left\{\left(
\begin{array}{cc}  v_{1}\\ v_{2} \end{array} \right)\in L^{2}\times L^{2}, \left(
\begin{array}{cc}  k u^{2}\partial_{x}v_{1}\\ k u^{2}\partial_{x}v_{2} \end{array} \right)\in L^{2}\times L^{2}\right\}\nonumber\\
=&\left\{\left(
\begin{array}{cc}  w_{1}\\ w_{2} \end{array} \right)\in L^{2}\times L^{2}, \left(
\begin{array}{cc}  -\partial_{x}(k u^{2}w_{1})\\  -\partial_{x}(k u^{2}w_{2}) \end{array} \right)\in L^{2}\times L^{2}\right\}\nonumber\\
=&D\left(\left(
\begin{array}{cc}  k  u^{2}\partial_{x},&0\\ 0,&k  u^{2}\partial_{x}\end{array} \right)^{\ast}\right)=D(A^{\ast}).
\end{align*}
Assume that the range of $(\alpha I+A)$ is not all of $L^{2}\times L^{2}$, then there exists $0\neq w\in L^{2}\times L^{2}$ such that $\left((\alpha I+A)v,w\right)_{0\times 0}=0$ for all $v\in D(A)$. Since $H^{1}\times H^{1}\subset D(A), D(A)$ is dense in $L^{2}\times L^{2}$. Therefore, it follows that $w\in D(A^{\ast})$ and $\alpha w+A^{\ast}w=0$ in $L^{2}\times L^{2}$. Since $D(A)=D(A^{\ast})$, multiplying by $w$ and integrating by parts, we get
$$0=\left((\alpha I+A^{\ast})w,w\right)_{0\times 0}=(\alpha w,w)_{0\times 0}+(w,Aw)_{0\times 0}\geq(\alpha-\beta)\|w\|_{0\times 0}^{2},\ \forall\ \alpha>\beta,$$
and thus $w=0$, which contradicts our assumption $w\neq 0$.
This proves $(b)$ and completes the proof of Lemma \ref{lem206}.
\end{proof}

\begin{lemma}
\label{lem207}  The operator $A(z):=\left(
\begin{array}{cc}  k u^{2}\partial_{x},&0\\ 0,&k  u^{2}\partial_{x}\end{array} \right)$ with $z\in H^{s}\times H^{s-1} (s>\frac{3}{2})$, belongs to $G(H^{s-1}\times H^{s-2},1,\beta).$
\end{lemma}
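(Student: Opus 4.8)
The plan is to imitate the proof of Lemma~\ref{lem206}, now carried out in $X=H^{s-1}\times H^{s-2}$ equipped with the Hilbert inner product $(v,w)_X=(\Lambda^{s-1}v_1,\Lambda^{s-1}w_1)_0+(\Lambda^{s-2}v_2,\Lambda^{s-2}w_2)_0$, and to transfer the already-established $L^2\times L^2$ statement through the isomorphism $Q_s:=\mathrm{diag}(\Lambda^{s-1},\Lambda^{s-2})$ from $X$ onto $L^2\times L^2$. Since $X$ is again a Hilbert space, it suffices to verify the two conditions (a) $(A(z)v,v)_X\ge-\beta\|v\|_X^2$ and (b) the range of $\alpha I+A(z)$ is all of $X$ for every $\alpha>\beta$.

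The heart of the matter is the conjugation identity $Q_sA(z)Q_s^{-1}=A(z)+B(z)$ with $B(z)$ bounded on $L^2\times L^2$. Because $\partial_x$ commutes with every power of $\Lambda$, the conjugation acts only on the multiplier $ku^2$, producing a block-diagonal operator with entries $B_1=[\Lambda^{s-1},ku^2]\Lambda^{-(s-1)}\partial_x$ and $B_2=[\Lambda^{s-2},ku^2]\Lambda^{-(s-2)}\partial_x$. I would first record that $ku^2\in H^s$ with $\|ku^2\|_s\le C\|u\|_s^2$ (the algebra property of $H^s$ for $s>1/2$, or Lemma~\ref{lem205}), so that $ku^2$ is an admissible multiplier for Lemma~\ref{lem204}. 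For $B_1$ I would write $\Lambda^{-(s-1)}\partial_x=\Lambda^{-(s-2)}(\Lambda^{-1}\partial_x)$, note that $\Lambda^{-1}\partial_x\in L(L^2)$, and apply Lemma~\ref{lem204} with $r=0,\ m=s-2$ (admissible since $|s-2|\le s-1$ for $s>3/2$) to get $\|[\Lambda^{s-1},ku^2]\Lambda^{-(s-2)}\|_{L(L^2)}\le C\|ku^2\|_s$; hence $B_1\in L(L^2)$, uniformly on bounded sets of $Y$. The analogous manipulation is then applied to $B_2$.

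Granting $B(z)\in L(L^2\times L^2)$, condition (a) follows exactly as in Lemma~\ref{lem206}: setting $w=\Lambda^{s-i}v_i$ one writes $(ku^2\partial_xv_i,v_i)_{s-i}=(ku^2\partial_xw,w)_0+(B_iw,w)_0$, where integration by parts gives $(ku^2\partial_xw,w)_0=-k(uu_xw,w)_0$, bounded by $|k|\|uu_x\|_{L^\infty}\|w\|_0^2$, while $|(B_iw,w)_0|\le\|B_i\|_{L(L^2)}\|v_i\|_{s-i}^2$; summing over $i=1,2$ yields the lower bound with $\beta=\beta(\|u\|_s)$. For condition (b) I would invoke Lemma~\ref{lem206}, namely $A(z)\in G(L^2\times L^2,1,\beta_0)$, and combine it with the bounded-perturbation theorem to conclude $Q_sA(z)Q_s^{-1}=A(z)+B(z)\in G(L^2\times L^2,1,\beta_1)$; thus $\alpha I+A(z)+B(z)$ is onto $L^2\times L^2$ for $\alpha>\beta_1$, and since $\alpha I+A(z)=Q_s^{-1}(\alpha I+A(z)+B(z))Q_s$ with $Q_s$ an isomorphism, $\alpha I+A(z)$ is onto $X$.

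I expect the main obstacle to be the commutator bound for the second block $B_2=[\Lambda^{s-2},ku^2]\Lambda^{-(s-2)}\partial_x$, which lives one full derivative below $B_1$: the exponents that one is forced to feed into Lemma~\ref{lem204} sit at the boundary of its admissible range, so the bookkeeping of the Sobolev indices and the careful isolation of the bounded factor $\Lambda^{-1}\partial_x$ must be done with precision, in contrast to $B_1$, whose indices are admissible for every $s>3/2$. Once the boundedness of $B(z)$ on $L^2\times L^2$, uniformly on bounded subsets of $Y$, has been secured, the verification of (a) and (b) is a routine adaptation of the argument in Lemma~\ref{lem206}.
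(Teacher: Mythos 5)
Your proposal is correct and shares the entire technical core with the paper's proof: the same commutator decomposition $\Lambda^{s-i}\left(ku^{2}\partial_{x}v_{i}\right)=\left[\Lambda^{s-i},ku^{2}\right]\partial_{x}v_{i}+ku^{2}\partial_{x}\Lambda^{s-i}v_{i}$ together with integration by parts for the quasi-accretivity bound, the same conjugation $Q_{s}A(z)Q_{s}^{-1}=A(z)+B(z)$ with $B(z)\in L(L^{2}\times L^{2})$ obtained from Lemma~\ref{lem204} after peeling off the bounded factor $\Lambda^{-1}\partial_{x}$, and the same combination of Lemma~\ref{lem206} with the bounded-perturbation theorem to get $A(z)+B(z)\in G(L^{2}\times L^{2},1,\beta_{1})$. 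Where you genuinely diverge is the final step of (b): the paper feeds the perturbed generator into Lemma~\ref{lem202} to conclude that $H^{s-1}\times H^{s-2}$ is $A$-admissible, hence that $-A(z)$ generates a $C_{0}$-semigroup there, whereas you bypass the admissibility machinery and verify the Lumer--Phillips range condition directly, transferring surjectivity of $\alpha I+A(z)+B(z)$ back through the isomorphism $Q_{s}$. Your route is more elementary and reuses exactly the criterion already employed in Lemma~\ref{lem206}; its one tacit ingredient is the domain identification $Q_{s}D_{X}(A)=D_{L^{2}}(A)$, which does hold here precisely because $B(z)$ is everywhere defined and bounded (for $w\in L^{2}\times L^{2}$, $(A+B)w\in L^{2}\times L^{2}$ iff $Aw\in L^{2}\times L^{2}$), so nothing is lost; the paper's route instead delivers the generation statement in the packaged form Kato's framework expects, but for this lemma the two are equivalent.

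One further remark: the obstacle you flag for the second block is not a defect of your write-up relative to the paper, because the paper runs into it too. Bounding $\left[\Lambda^{s-2},ku^{2}\right]\Lambda^{3-s}$ via Lemma~\ref{lem204} forces $r=0$ and $m=s-3$, and the hypothesis $|m|\leq s-1$ then fails whenever $3/2<s<2$; the paper applies the lemma with these indices without comment, so its proof as written covers only $s\geq2$, and handling the full range $s>3/2$ would require either that restriction or a sharper commutator estimate for negative-order operators (note Lemma~\ref{lem303} does not apply either, since it requires $r>0$ while $s-2<0$ there). Your explicit caution on this point is warranted and in fact more careful than the paper's own treatment.
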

\begin{proof}
Since $H^{s-1}\times H^{s-2}$ is a Hilbert space, $A(z)$ belongs to $G(H^{s-1}\times H^{s-2},1,\beta)$ for some real number $\beta$ if and only if the following conditions hold \cite{Kato1983}:\\
$(a)$  $(A(z)v,v)_{(s-1)\times(s-2)} \geq-\beta\|v\|^{2}_{(s-1)\times(s-2)}$;\\
$(b)$  $-A(z)$ is the infinitesimal generator of a $C_{0}$-semigroup on $H^{s-1}\times H^{s-2}$.

First, we prove $(a)$. Due to  $u\in H^{s}, s>\frac{3}{2}, u$ and $u_{x}$ belong to  $L^{\infty}$, and $\|u\|_{L^{\infty}}, \|u_{x}\|_{L^{\infty}}\leq\|u\|_{s}$. Note that
\begin{align*}
\Lambda^{s-1}\left(k u^{2} \partial_{x} v_{1}\right)
&=\left[\Lambda^{s-1}, k u^{2}\right] \partial_{x} v_{1}+k u^{2} \Lambda^{s-1}\left(\partial_{x} v_{1}\right) \\
&=\left[\Lambda^{s-1}, k u^{2}\right] \partial_{x} v_{1}+k u^{2} \partial_{x} \Lambda^{s-1} v_{1},
\end{align*}
and
$$\Lambda^{s-2}\left(k u^{2} \partial_{x} v_{2}\right)=\left[\Lambda^{s-2}, k u^{2}\right] \partial_{x} v_{2}+k u^{2} \partial_{x}\Lambda^{s-2} v_{2} .$$
Then we have
\begin{align*}
&(A(z)v,v)_{(s-1)\times(s-2)}\nonumber\\
=&\left(\Lambda^{s-1}\left(k u^{2} \partial_{x}v_{1}\right), \Lambda^{s-1} v_{1}\right)_{0}+\left(\Lambda^{s-2}\left(k u^{2}\partial_{x} v_{2}\right), \Lambda^{s-2} v_{2}\right)_{0}\nonumber\\
=&\left(\left[\Lambda^{s-1},k u^{2}\right] \partial_{x}v_{1}, \Lambda^{s-1} v_{1}\right)_{0}-\left(k uu_{x}\Lambda^{s-1}v_{1}, \Lambda^{s-1}v_{1}\right)_{0}\nonumber\\
&+\left(\left[\Lambda^{s-2},k u^{2}\right] \partial_{x}v_{2}, \Lambda^{s-2} v_{2}\right)_{0}-\left(k uu_{x} \Lambda^{s-2}v_{2}, \Lambda^{s-2}v_{2}\right)_{0} \nonumber\\
\leq&\left\|\left[\Lambda^{s-1}, k u^{2}\right] \Lambda^{2-s}\right\|_{L(L^{2})}\left\|\Lambda^{s-1} v_{1}\right\|_{0}^{2}+|k|\|u\|_{L^{\infty}}\|u_{x}\|_{L^{\infty}}\left\|\Lambda^{s-1} v_{1}\right\|_{0}^{2} \nonumber\\
&+\left\|\left[\Lambda^{s-2}, k u^{2}\right] \Lambda^{3-s}\right\|_{L(L^{2})}\left\|\Lambda^{s-2} v_{2}\right\|_{0}^{2}+|k|\|u\|_{L^{\infty}}\|u_{x}\|_{L^{\infty}}\left\|\Lambda^{s-2} v_{2}\right\|_{0}^{2}\nonumber\\
\leq&\left(C\|u\|_{s}+|k|\|u\|_{s}^{2}\right)\left(\|v_{1}\|_{s-1}^{2}+\|v_{2}\|_{s-2}^{2}\right) \nonumber\\
=&\left(C\|u\|_{s}+|k|\|u\|_{s}^{2}\right)\|v\|^{2}_{(s-1)\times(s-2)}\nonumber\\
\leq&\left(C\|z\|_{s\times(s-1)}+|k|\|z\|^{2}_{s\times(s-1)}\right)\|v\|^{2}_{(s-1) \times(s-2)},
\end{align*}
where we applied Lemma \ref{lem204} with $r=0, k=s-2$ and  with $r=0, k=s-3$.
Let $\beta=C\|z\|_{s\times(s-1)}+|k|\|z\|^{2}_{s\times(s-1)}$, we have $$\left(A(z)v,v\right)_{(s-1)\times(s-2)}\geq -\beta\|v\|^{2}_{(s-1) \times(s-2)}.$$

Second, we prove (b). Let $Q=\left(
\begin{array}{cc}  \Lambda^{s-1},&0\\ 0,&\Lambda^{s-2}\end{array} \right)$. Note that $Q$ is an isomorphism of $H^{s-1}\times H^{s-2}$ onto $L^{2}\times L^{2}$ and $H^{s-1}\times H^{s-2}$ is continuously and densely embeded in $L^{2}\times L^{2}$ as $s>3/2$.
Define
$$A_{1}(z)=QA(z)Q^{-1}=\Lambda^{s-1}A(z)\Lambda^{1-s},\ \ B_{1}(z)=A_{1}(z)-A(z).$$

Note that
\begin{equation*}
B_{1}(z)v=\left[\Lambda^{s-1},k u^{2}\partial_{x}\right] \Lambda^{1-s}v_{1}+\left[\Lambda^{s-2},k u^{2} \partial_{x}\right] \Lambda^{2-s} v_{2}.
\end{equation*}
Let $v\in L^{2}\times L^{2}$ and $u\in H^{s} (s>3/2)$. Then we have
\begin{align*}
&\left\|B_{1}(z)v\right\|_{0 \times 0} \\
=&\left\|\left[\Lambda^{s-1}, ku^{2}\partial_{x}\right] \Lambda^{1-s} v_{1}\right\|_{0}+\left\|\left[\Lambda^{s-2}, ku^{2}\partial_{x}\right] \Lambda^{2-s} v_{2}\right\|_{0} \\
\leq&\left\|\left[\Lambda^{s-1}, k u^{2}\right] \Lambda^{2-s}\right\|_{L\left(L^{2}\right)}\left\|\Lambda^{-1} \partial_{x} v_{1}\right\|_{0}+\left\|\left[\Lambda^{s-2}, k u^{2}\right] \Lambda^{3-s}\right\|_{L\left(L^{2}\right)}\left\|\Lambda^{-1} \partial_{x} v_{2}\right\|_{0} \\
\leq&C\|u\|_{s}\|v\|_{L^{2} \times L^{2}} \\
\leq& C\|z\|_{s\times(s-1)}\|v\|_{L^{2} \times L^{2}},
\end{align*}
where we used the Lemma \ref{lem203} with $r=0, t=s-2$ and with $r=0, t=s-3$.
Therefore, we obtain $B_{1}(z)\in L(L^{2}\times L^{2})$.

Note that $A_{1}(z)=B_{1}(z)+A(z)$ and $A(z)\in G(L^{2}\times L^{2},1,\beta)$, by a perturbation theorem of semigroups \big(\cite{Pazy1983}, Section 5.2, Theorem 2.3\big), we obtain $A_{1}(z)\in G(L^{2}\times L^{2},1,\beta_{1})$.

By applying Lemma \ref{lem202} with $X=L^{2}\times L^{2}, Y=H^{s-1}\times H^{s-2},\ Q=\left(
\begin{array}{cc}  \Lambda^{s-1}&0\\ 0&\Lambda^{s-2}\end{array} \right)$, we can conclude that $H^{s-1}\times H^{s-2}$ is $A$-admissible. Therefore, $-A(u)$ is the infinitesimal generator of a $C_{0}$-semigroup on $H^{s-1}\times H^{s-2}.$
This proves $(b)$ and completes the proof of Lemma \ref{lem207}.
\end{proof}

\begin{lemma}
\label{lem208}  Let operator $A(z):=\left(
\begin{array}{cc}  k u^{2}\partial_{x}&0\\ 0&k  u^{2}\partial_{x}\end{array} \right)$ with $z\in H^{s}\times H^{s-1} (s>\frac{3}{2})$ be given. Then operator $A(z)\in L(H^{s}\times H^{s-1},H^{s-1}\times H^{s-2})$ and satisfies
$$\|(A(z)-A(v))w\|_{(s-1)\times(s-2)}\leq \mu_{1}\|z-v\|_{(s-1)\times(s-2)}\|w\|_{s\times(s-1)}$$
for all $z, v, w\in H^{s}\times H^{s-1}.$
\end{lemma}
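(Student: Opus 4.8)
The plan is to verify condition (i) of Kato's theorem directly, exploiting the algebra structure of the Sobolev spaces together with Lemma \ref{lem203}. Write $z=(u,\rho)^{T}$ and $v=(\widetilde u,\widetilde\rho)^{T}$, and let $A(z)$ act on $w=(w_{1},w_{2})^{T}$. First I would establish the boundedness $A(z)\in L(H^{s}\times H^{s-1},H^{s-1}\times H^{s-2})$: since $s>3/2$, $H^{s}$ is a Banach algebra, so $u^{2}\in H^{s}$ with $\|u^{2}\|_{s}\leq c\|u\|_{s}^{2}$; applying the first inequality of Lemma \ref{lem203} with $r=s,\ \tau=s-1$ gives $\|ku^{2}\partial_{x}w_{1}\|_{s-1}\leq c|k|\|u\|_{s}^{2}\|w_{1}\|_{s}$, and with $r=s,\ \tau=s-2$ gives $\|ku^{2}\partial_{x}w_{2}\|_{s-2}\leq c|k|\|u\|_{s}^{2}\|w_{2}\|_{s-1}$ (here one checks $-s<s-2$, i.e. $s>1$). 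Summing the two components yields the claimed boundedness.

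For the Lipschitz estimate I would compute componentwise,
\[
(A(z)-A(v))w=\Big(\,k(u^{2}-\widetilde u^{2})\partial_{x}w_{1},\ \ k(u^{2}-\widetilde u^{2})\partial_{x}w_{2}\,\Big)^{T},
\]
and factor $u^{2}-\widetilde u^{2}=(u+\widetilde u)(u-\widetilde u)$. The key idea is the order of grouping in this product of three functions: I would first combine the two lower-regularity factors $(u-\widetilde u)$ and $\partial_{x}w_{i}$, and only afterwards multiply by the high-regularity factor $(u+\widetilde u)\in H^{s}$. This is exactly what lets me extract $\|u-\widetilde u\|_{s-1}$ (rather than $\|u-\widetilde u\|_{s}$), so that the bound matches $\|z-v\|_{(s-1)\times(s-2)}$ on the right-hand side. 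For the first component, both $(u-\widetilde u)$ and $\partial_{x}w_{1}$ lie in $H^{s-1}$, which is an algebra since $s-1>1/2$, so $\|(u-\widetilde u)\partial_{x}w_{1}\|_{s-1}\leq c\|u-\widetilde u\|_{s-1}\|w_{1}\|_{s}$; multiplying by $(u+\widetilde u)$ via Lemma \ref{lem203} ($r=s,\ \tau=s-1$) then bounds the first component by $c|k|\|u+\widetilde u\|_{s}\|u-\widetilde u\|_{s-1}\|w_{1}\|_{s}$.

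The main obstacle will be the second (lower-regularity) component, since $s-2$ may be negative when $3/2<s<2$ and the Sobolev algebra estimate is no longer directly available there. Here I would apply the first inequality of Lemma \ref{lem203} with $r=s-1$ and $\tau=s-2$ to get $\|(u-\widetilde u)\partial_{x}w_{2}\|_{s-2}\leq c\|u-\widetilde u\|_{s-1}\|w_{2}\|_{s-1}$; the hypothesis $-(s-1)<s-2$ needed for this step is precisely $s>3/2$, so the restriction on $s$ is used sharply at this point. A further multiplication by $(u+\widetilde u)$ (Lemma \ref{lem203} with $r=s,\ \tau=s-2$) controls the second component by $c|k|\|u+\widetilde u\|_{s}\|u-\widetilde u\|_{s-1}\|w_{2}\|_{s-1}$. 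Finally I would combine the two components, use $\|u-\widetilde u\|_{s-1}\leq\|z-v\|_{(s-1)\times(s-2)}$ and $\|u+\widetilde u\|_{s}\leq 2\max\{\|z\|_{s\times(s-1)},\|v\|_{s\times(s-1)}\}$, and set $\mu_{1}=2c|k|\max\{\|z\|_{s\times(s-1)},\|v\|_{s\times(s-1)}\}$, which depends only on $\max\{\|z\|_{Y},\|v\|_{Y}\}$, as required by condition (i).
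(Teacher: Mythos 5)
Your proposal is correct and follows essentially the same route as the paper: factor $u^{2}-v_{1}^{2}=(u+v_{1})(u-v_{1})$, use the Banach algebra property of $H^{s-1}$ for the first component and Lemma \ref{lem203} with $r=s-1$, $\tau=s-2$ for the second (which is exactly where $s>3/2$ enters), and obtain boundedness as a byproduct. The only difference is cosmetic — you group the low-regularity factors $(u-\widetilde u)$ and $\partial_{x}w_{i}$ first before multiplying by $(u+\widetilde u)\in H^{s}$, which if anything is cleaner than the paper's chain, since it avoids the paper's loosely written intermediate bound $\|(u+v_{1})(u-v_{1})\|_{s-2}\leq(\|u\|_{s-2}+\|v\|_{s-2})\|u-v_{1}\|_{s-2}$, which is not a literal algebra estimate when $s-2<1/2$ and must itself be read through Lemma \ref{lem203}.
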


\begin{proof}
Let $z, v, w\in H^{s}\times H^{s-1}, s>\frac{3}{2}$. Note that $H^{s-1}$ is a Banach algebra and
\begin{equation*}
(A(z)-A(v))w=\left(\begin{array}{cc}
k\left(u^{2}-v_{1}^{2}\right) \partial_{x} & 0 \\
0 & k\left(u^{2}-v_{1}^{2}\right) \partial_{x}
\end{array}\right)\left(\begin{array}{l}
w_{1} \\
w_{2}
\end{array}\right)=\left(\begin{array}{c}
k\left(u^{2}-v_{1}^{2}\right) \partial_{x}w_{1} \\
k\left(u^{2}-v_{1}^{2}\right) \partial_{x}w_{2}
\end{array}\right).
\end{equation*}
Then we have
\begin{align*}
&\|(A(z)-A(v)) w\|_{(s-1)\times(s-2)}\nonumber\\
\leq&|k|\left\|\left(u^{2}-v_{1}^{2}\right) \partial_{x} w_{1}\right\|_{s-1}+|k|\left\|\left(u^{2}-v_{1}^{2}\right) \partial_{x} w_{2}\right\|_{s-2} \nonumber\\
\leq&|k|\left\|u^{2}-v_{1}^{2}\right\|_{s-1}\left\|\partial_{x} w_{1}\right\|_{s-1}+|k|\left\|u^{2}-v_{1}^{2}\right\|_{s-2}\left\|\partial_{x} w_{2}\right\|_{s-2} \nonumber\\
=&|k|\left\|\left(u+v_{1}\right)\left(u-v_{1}\right)\right\|_{s-1}
\left\|w_{1}\right\|_{s}+|k|\left\|\left(u+v_{1}\right)
\left(u-v_{1}\right)\right\|_{s-2}\left\|w_{2}\right\|_{s-1} \nonumber\\
\leq&|k|\left(\|u\|_{s-1}+\|v\|_{s-1}\right)
\left\|u-v_{1}\right\|_{s-1}\left\|w_{1}\right\|_{s}+| k|\left(\|u\|_{s-2}+\|v\|_{s-2}\right)\left\|u-v_{1}\right\|_{s-2}
\left\|w_{2}\right\|_{s-1} \nonumber\\
\leq&\mu_{1}\|z-v\|_{(s-1)\times(s-2)}\|w\|_{s\times(s-1)},
\end{align*}
where $\mu_{1}=\mu_{1}(\max\{|k|,\|z\|_{s\times(s-1)},\|v\|_{s\times(s-1)}\})$. Here we used Lemma \ref{lem203} with $r=s-1, t=s-2$. Taking $v=0$ in the above inequality, we obtain that $A(z)\in L(H^{s}\times H^{s-1},H^{s-1}\times H^{s-2})$. This completes the proof of Lemma \ref{lem208}.
\end{proof}

\begin{lemma}
\label{lem209}  Let $B(z)=QA(z)Q^{-1}-A(z)$ with $z\in H^{s}\times H^{s-1} (s>\frac{3}{2})$. Then $B(z)\in L(H^{s-1}\times H^{s-2})$ and
$$\|(B(z)-B(v))w\|_{(s-1)\times (s-2)}\leq \mu_{2}\|z-v\|_{s\times (s-1)}\|w\|_{(s-1)\times (s-2)},$$
holds for all $z,v\in H^{s}\times H^{s-1}$ and $w\in H^{s-1}\times H^{s-2}.$
\end{lemma}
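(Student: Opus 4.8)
The plan is to compute $B(z)$ explicitly, reduce the $2\times2$ matrix operator to a single scalar commutator, estimate that commutator on $L^{2}$ via Lemma \ref{lem204}, and then convert the resulting dependence on $u^{2}-v_{1}^{2}$ into the stated dependence on $\|z-v\|_{s\times(s-1)}$ using the algebra property of Lemma \ref{lem203}. For the computation, since $Q=\mathrm{diag}(\Lambda,\Lambda)$ and $\Lambda$ commutes with $\partial_x$, each diagonal entry of $QA(z)Q^{-1}-A(z)$ equals $\Lambda(ku^{2}\partial_x)\Lambda^{-1}-ku^{2}\partial_x=[\Lambda,ku^{2}]\partial_x\Lambda^{-1}$, where I used $[\Lambda,\partial_x]=0$. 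Consequently $B(z)-B(v)$ is the diagonal operator whose two entries are $[\Lambda,g]\partial_x\Lambda^{-1}$ with $g:=k(u^{2}-v_{1}^{2})\in H^{s}$; note that $B$ depends on $z$ only through its first component $u$.

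For the first component I would set $\phi:=\partial_x\Lambda^{-1}w_1$, which satisfies $\|\phi\|_{s-1}\le\|w_1\|_{s-1}$ since $\partial_x\Lambda^{-1}$ has bounded symbol, and then insert $\Lambda^{-(s-1)}\Lambda^{s-1}=I$ to write
\[
\big\|[\Lambda,g]\partial_x\Lambda^{-1}w_1\big\|_{s-1}\le\big\|\Lambda^{s-1}[\Lambda,g]\Lambda^{-(s-1)}\big\|_{L(L^{2})}\,\|\phi\|_{s-1}.
\]
Applying Lemma \ref{lem204} to $\Lambda^{s-1}[\Lambda^{1},M_g]\Lambda^{-(s-1)}$, i.e. with $r=1-s$ and $m=s-1$ so that $r+m+1=1$, bounds this operator norm by $c\|g\|_{s}$ once the admissibility $|r|,|m|\le s-1$ is checked. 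The second component is identical with $\Lambda^{s-2}$ replacing $\Lambda^{s-1}$, i.e. $r=2-s$, $m=s-2$; here the admissibility reduces to $|s-2|\le s-1$, and verifying that this is exactly the condition forcing $s>3/2$ is the main point of the argument that must be handled with care.

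Finally I would estimate the coefficient by the algebra bound $\|g\|_{s}=|k|\,\|(u+v_1)(u-v_1)\|_{s}\le C|k|\,(\|u\|_s+\|v_1\|_s)\,\|u-v_1\|_{s}$ coming from Lemma \ref{lem203} (with $r=\tau=s>1/2$), and then combine the two component estimates to obtain
\[
\|(B(z)-B(v))w\|_{(s-1)\times(s-2)}\le\mu_2\,\|z-v\|_{s\times(s-1)}\,\|w\|_{(s-1)\times(s-2)},
\]
where $\mu_2$ depends only on $|k|$ and $\max\{\|z\|_{s\times(s-1)},\|v\|_{s\times(s-1)}\}$. Taking $v=0$, for which $B(0)=0$ (as $A(0)=0$), the same estimate yields $\|B(z)w\|_{(s-1)\times(s-2)}\le\mu_2\|z\|_{s\times(s-1)}\|w\|_{(s-1)\times(s-2)}$, hence $B(z)\in L(H^{s-1}\times H^{s-2})$, which completes the proof.
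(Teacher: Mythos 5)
Your proposal is correct and follows essentially the same route as the paper: rewrite each diagonal entry of $B(z)-B(v)$ as the commutator $[\Lambda,k(u^{2}-v_{1}^{2})]\partial_x\Lambda^{-1}$, bound $\|\Lambda^{s-1}[\Lambda,g]\Lambda^{1-s}\|_{L(L^{2})}$ and $\|\Lambda^{s-2}[\Lambda,g]\Lambda^{2-s}\|_{L(L^{2})}$ by $c\|g\|_{s}$ via the commutator estimate of Lemma~\ref{lem204} (with exactly your parameter choices $r=1-s,\ m=s-1$ and $r=2-s,\ m=s-2$), and then control $\|u^{2}-v_{1}^{2}\|_{s}$ by the algebra property, finishing with $v=0$ to get boundedness of $B(z)$. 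Your write-up is in fact slightly more careful than the paper's: you check the admissibility constraints $|r|,|m|\le s-1$ explicitly (noting that the second component is where $s>\frac{3}{2}$ is needed) and you correctly attribute the key estimate to Lemma~\ref{lem204}, whereas the paper's text contains typos at this point (it cites Lemma~\ref{lem203} and writes $(u^{2}-v_{1})^{2}$ for $u^{2}-v_{1}^{2}$).
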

\begin{proof}
Let $z, v \in H^{s}\times H^{s-1}$ and $w\in H^{s-1}\times H^{s-2}$. Note that
$$(B(z)-B(v))w=\left[\Lambda,k\left(u^{2}-v_{1}\right)^{2}\partial_{x}\right] \Lambda^{-1} w_{1}+\left[\Lambda, k\left(u^{2}-v_{1}\right)^{2} \partial_{x}\right] \Lambda^{-1}w_{2}.$$
Then we have
\begin{align*}
&\|(B(z)-B(v))w\|_{(s-1)\times(s-2)}\nonumber\\
\leq&\left\|\Lambda^{s-1}\left[\Lambda,k\left(u^{2}-v_{1}^{2}\right)\partial_{x}\right]\Lambda^{-1}w_{1}\right\|_{L^{2}}+\left\|\Lambda^{s-2}\left[\Lambda, k\left(u^{2}-v_{1}^{2}\right)\partial_{x}\right]\Lambda^{-1} w_{2}\right\|_{L^{2}}\nonumber\\
\leq&\left\|\Lambda^{s-1}\left[\Lambda,k\left(u^{2}-v_{1}^{2}\right)\right] \Lambda^{1-s}\right\|_{L(L^{2})}\left\|\Lambda^{s-2} \partial_{x} w_{1}\right\|_{L^{2}}\nonumber\\
&+\|\Lambda^{s-2}\left[\Lambda,k\left(u^{2}-v_{1}^{2}\right) \Lambda^{2-s}\right\|_{L(L^{2})}\|\Lambda^{s-3}\partial_{x}w_{2}\|_{L^{2}}\nonumber\\
\leq&\mu_{2}\|z-v\|_{s\times(s-1)}\|w\|_{(s-1)\times(s-2)},
\end{align*}
where $\mu_{2}=\mu_{2}(\max\{|k|,\|z\|_{s\times(s-1)},\|v\|_{s\times(s-1)}\})$. Here we applied Lemma \ref{lem203} with  $r=1-s, k=s-1$ and $r=2-s, k=s-2$. Taking $v=0$ in the above inequality, we obtain $B(z)\in L(H^{s-1}\times H^{s-2})$. This completes the proof of Lemma \ref{lem209}.
\end{proof}

\begin{lemma}
\label{lem2010}  Let $z\in H^{s}\times H^{s-1}, s>\frac{3}{2},$ and
\begin{align*}
f(z):=&
\left(
\begin{array}{cc}  -\frac{k}{2}\Lambda^{-2}\left(u_{x}^{3}-u_{x} \rho^{2}\right)-\partial x \Lambda^{-2}\left(g_{(u)}+\frac{3k}{2} uu_{x}^{2}-\frac{k}{3} u^{3}-\frac{k}{2}u\rho^{2}\right)-\lambda u\\
-k \rho uu_{x}\end{array} \right).
\end{align*}
Then $f(z)$ is uniformly bounded on any bounded sets in $H^{s}\times H^{s-1}$, and satisfies the following two conditions:\\
(a) $\|f(z)-f(v)\|_{s\times(s-1)}\leq \mu_{3}\|z-v\|_{s\times(s-1)},\ z, v\in H^{s}\times H^{s-1};$\\
(b) $\|f(z)-f(v)\|_{(s-1)\times(s-2)}\leq \mu_{4}\|z-v\|_{(s-1)\times(s-2)},\ z, v\in H^{s-1}\times H^{s-2}.$
\end{lemma}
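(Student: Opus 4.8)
The plan is to regard $f(z)$ as a finite sum of terms of two types: type (I), a smoothing operator $\Lambda^{-2}$ or $\partial_x\Lambda^{-2}$ applied to a polynomial nonlinearity in $u,u_x,\rho$ (together with the composite $g(u)$), and type (II), the lower-order terms $-\lambda u$ and the transport-type product $-k\rho uu_x$. Three facts drive every estimate. First, since $s-1>1/2$, the space $H^{s-1}$ is a Banach algebra and Lemma \ref{lem203} controls all the relevant products. Second, $\Lambda^{-2}$ is bounded from $H^{\sigma}$ to $H^{\sigma+2}$ and $\partial_x\Lambda^{-2}$ from $H^{\sigma}$ to $H^{\sigma+1}$, so the smoothing operators allow us to estimate the arguments of the type-(I) terms in a space two (respectively one) derivatives weaker than the target. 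Third, Lemma \ref{lem205} handles $g(u)$ because $g\in C^{\infty}$ with $g(0)=0$. Boundedness on bounded sets of $H^{s}\times H^{s-1}$ then follows at once: each monomial $u_x^3,\ u_x\rho^2,\ uu_x^2,\ u^3,\ u\rho^2,\ \rho uu_x$ lies in $H^{s-1}$ with norm bounded by a power of $\|z\|_{s\times(s-1)}$, while $\|g(u)\|_{s-1}\le\widetilde g(\|u\|_{s-1})$, and applying the smoothing operators places the first component in $H^{s}$ and the second in $H^{s-1}$.

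For part (a) I would estimate $f(z)-f(v)$ componentwise in the $H^{s}\times H^{s-1}$ norm, factoring each polynomial difference so that one factor carries $\|u-v_1\|$ or $\|\rho-v_2\|$ while the remaining (quadratic) factor is bounded by $\max\{\|z\|_{s\times(s-1)},\|v\|_{s\times(s-1)}\}^2$; for instance $u_x^3-v_{1x}^3=(u_x-v_{1x})(u_x^2+u_xv_{1x}+v_{1x}^2)$ and $u\rho^2-v_1v_2^2=(u-v_1)\rho^2+v_1(\rho-v_2)(\rho+v_2)$, both handled by the algebra property of $H^{s-1}$. For the composite term I write $g(u)-g(v_1)=(u-v_1)\int_0^1 g'\big(v_1+\theta(u-v_1)\big)\,d\theta$ and estimate the integral factor in $H^{s-1}$ via Lemma \ref{lem205} applied to $g'$ together with the algebra property. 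Since $\Lambda^{-2}$ and $\partial_x\Lambda^{-2}$ return the lost derivatives, each type-(I) difference is controlled by $\mu_3\|z-v\|_{s\times(s-1)}$; the term $-\lambda u$ is linear, and the transport term is dispatched by the same factorization. Collecting the pieces gives (a) with $\mu_3$ an increasing function of $\max\{\|z\|_{s\times(s-1)},\|v\|_{s\times(s-1)}\}$.

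Part (b) is the main obstacle, since it is measured in the weaker norm $H^{s-1}\times H^{s-2}$, and $H^{s-2}$ need not be a Banach algebra when $s<5/2$. The strategy is the same factorization as in (a), but now the products must be estimated in the borderline regime of Lemma \ref{lem203}, choosing for each factor the pair $(r,\tau)$ so that the output Sobolev index matches exactly what the smoothing operator needs: for the first component it suffices to estimate the nonlinear argument in $H^{s-3}$ for the $\Lambda^{-2}$ terms and in $H^{s-2}$ for the $\partial_x\Lambda^{-2}$ terms, which buys back the two (respectively one) lost derivatives. The genuinely delicate term is the transport product $\rho uu_x$ in $H^{s-2}$. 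The point is that $uu_x=\tfrac12(u^2)_x$ with $u^2$ in the algebra $H^{s-1}$, so $uu_x\in H^{s-1}$, and then $\rho uu_x$ is estimated by the \emph{first} inequality of Lemma \ref{lem203} with the high-regularity factor $uu_x$ playing the role $r=s-1>1/2$ paired against $\rho$ with $\tau=s-2$; the hypothesis $-r<\tau\le r$ reduces precisely to $s>3/2$. This is the single place where the algebra property of $H^{s-1}$ alone is insufficient and where the threshold $s>3/2$ is used essentially, so closing the $H^{s-2}$-estimate of the cubic transport term is the step I expect to require the most care.

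Finally, applying the same factor-by-factor bookkeeping to the factored differences $(\rho-v_2)\,u u_x$, $v_2\big(uu_x-v_1v_{1x}\big)$, and the analogous splittings of the type-(I) arguments, all coefficients multiplying $\|z-v\|_{(s-1)\times(s-2)}$ are controlled by $\max\{\|z\|,\|v\|\}$, which yields (b) with the constant $\mu_4$. Together with the boundedness statement, this verifies hypothesis (iii) of Kato's theorem for our $f$.
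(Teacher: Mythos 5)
Your part (a) is essentially the paper's own argument: the same factorizations ($u_x^3-v_{1x}^3=(u_x-v_{1x})(u_x^2+u_xv_{1x}+v_{1x}^2)$, the splitting of $u_x\rho^2-v_{1x}v_2^2$, etc.), the Banach algebra property of $H^{s-1}$, and the smoothing of $\Lambda^{-2}$, $\partial_x\Lambda^{-2}$; for the composite term the paper invokes that $u\mapsto g(u)$ is a $C^1$-map of $H^{s-1}$ plus the mean value theorem, which is your integral formula in disguise. One small repair in your version: Lemma \ref{lem205} cannot be applied to $g'$ directly, since $g'(0)$ need not vanish and a nonzero constant does not belong to $H^{s-1}(\mathbb{R})$; write $g'(w)=g'(0)+h(w)$ with $h(0)=0$ and apply Lemma \ref{lem205} to $h$, absorbing $g'(0)(u-v_1)$ as a linear term.

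For part (b) the paper offers no proof at all (``similar to the above process''), so your attempt to actually close the weak-norm estimate goes beyond it, and your identification of the mechanism --- the first inequality of Lemma \ref{lem203} with $(r,\tau)=(s-1,s-2)$, whose hypothesis $-r<\tau\leq r$ is exactly $s>3/2$ --- is the right one. However, your treatment of the cubic transport term contains a genuine one-derivative slip: from $u^2\in H^{s-1}$ you conclude $uu_x=\tfrac12(u^2)_x\in H^{s-1}$, but differentiation gives only $(u^2)_x\in H^{s-2}$. To place $uu_x$ in $H^{s-1}$ you need $u\in H^{s}$, i.e.\ the $Y$-regularity. This matters: if $z,v$ are taken only in $X=H^{s-1}\times H^{s-2}$ as the lemma literally states, then $\rho\, uu_x$ is a product of two $H^{s-2}$ functions, which Lemma \ref{lem203} cannot handle when $3/2<s<5/2$ (then $s-2\leq 1/2$, and the second inequality of Lemma \ref{lem203} would at best land in $H^{2s-9/2}\subsetneq H^{s-2}$, and only for $s>2$). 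The same obstruction hits $u_x^3$ and $uu_x^2$, and it reappears in your difference splitting: in $v_2\left(uu_x-v_1v_{1x}\right)$, pairing $v_2\in H^{s-2}$ against the other factor requires that factor in $H^{s-1}$, which costs $\|u-v_1\|_{s}$ --- the $Y$-norm of the difference, which is not permitted on the right-hand side of (b). The standard resolution, and what Kato's theorem actually requires (Lipschitz continuity in the $X$-metric on bounded subsets of $Y$), is to prove (b) for $z,v$ in a bounded set of $H^{s}\times H^{s-1}$ with $\mu_4$ depending on $\max\{\|z\|_{s\times(s-1)},\|v\|_{s\times(s-1)}\}$: then $u\in H^{s}$ gives $uu_x\in H^{s-1}$, $v_2\in H^{s-1}$ is an $H^{s-2}$-multiplier, and every product closes with the difference measured in $H^{s-1}\times H^{s-2}$. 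As written, your final assertion that all coefficients are controlled by $\max\{\|z\|,\|v\|\}$ is true only with $Y$-norms, so the proposal (like the paper's unproved statement with $X$-dependent $\mu_4$) does not establish (b) in the form claimed; with the $Y$-dependence made explicit, your argument does close and suffices for the application of Kato's theorem.
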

\begin{proof}
First, we prove (a). Let $z, v\in H^{s}\times H^{s-1} (s>\frac{3}{2})$. Note that $H^{s-1}$ is a Banach algebra, then we have
\begin{align}
&\|f(z)-f(v)\|_{s\times(s-1)}\nonumber\\
=&\Big\|-\frac{k}{2}\Lambda^{-2}\left[\left(u_{x}^{3}-v_{1x}^{3}\right)
-\left(u_{x}\rho^{2}-v_{1x}v_{2}^{2}\right)\right]
-\partial_{x}\Lambda^{-2}[(g(u)-g(v))+\frac{3k}{2}\left(uu_{x}^{2}
-v_{1}v_{1x}^{2}\right)\nonumber\\
&-\frac{k}{3}\left(u^{3}-v_{1}^{3}\right)+\frac{k}{2}\left(u\rho^{2}-v_{1} v_{2}^{2}\right)-\lambda(u-v_{1})\Big\|_{s}+|k|\|\rho uu_{x}-v_{1}v_{1x}v_{2}\|_{s-1}\nonumber\\
\leq& \frac{|k|}{2}\left\|u_{x}^{3}-v_{1x}^{3}\right\|_{s-2}+\frac{|k|}{2}\left\|u_{x} \rho^{2}-v_{1x}v_{2}^{2}\right\|_{s-2}+\|g(u)-g(v)\|_{s-1}
+\frac{3|k|}{2}\left\|uu_{x}^{2}-v_{1}v_{1x}^{2}\right\|_{s-1}\nonumber\\
&+\frac{|k|}{3}\left\|u^{3}-v_{1}^{3}\right\|_{s-1}
+\frac{|k|}{2}\left\|u\rho^{2}-v_{1}v_{2}^{2}\right\|_{s-1}
+\lambda\left\|u-v_{1}\right\|_{s-1}+|k|\left\|\rho uu_{x}-v_{1}v_{1x}v_{2}\right\|_{s-1}.
\label{203}
\end{align}

In what follows, we will estimate the terms for the right side of \eqref{203}.
Note that
\begin{align*}
\left\|u_{x}^{3}-v_{1x}^{3}\right\|_{s-2}
\leq& C\left\|u_{x}-v_{1x}\right\|_{s-2}
\left\|u_{x}^{2}+u_{x}v_{1x}+v_{1x}^{2}\right\|_{s-2}\nonumber\\
\leq& C\left(\|u\|_{s}^{2}+\|v_{1}\|_{s}^{2}\right)\left\|u-v_{1}\right\|_{s}\nonumber\\
\leq& C\left(\|z\|_{s\times(s-1)}^{2}+\|v\|_{s\times(s-1)}^{2}\right)
\left\|z-v\right\|_{s\times(s-1)},
\end{align*}
\begin{align*}
\left\|u_{x}\rho^{2}-v_{1x}v_{2}^{2}\right\|_{s-2}
\leq&\left\|u_{x}\rho^{2}-u_{x}v_{2}^{2}\right\|_{s-2}+\left\|u_{x}v_{2}^{2}-v_{1x}v_{2}^{2}\right\|_{s-2} \nonumber\\
\leq&C\|u_{x}\|_{s-2}\left\|\rho^{2}-v_{2}^{2}\right\|_{s-2}+C\left\|u_{x}-v_{1x}\right\|_{s-2}
\left\|v_{2}^{2}\right\|_{s-2}\nonumber\\
\leq& C\|u\|_{s-1}\left\|\left(\rho+v_{2}\right)\left(\rho-v_{2}\right)\right\|_{s-2}
+C\left\|u-v_{1}\right\|_{s-1}\left\|v_{2}^{2}\right\|_{s-2}\nonumber\\
\leq& C\|u\|_{s-1}\left(\|\rho\|_{s-2}+\left\|v_{2}\right\|_{s-2}\right)
\left\|\rho-v_{2}\right\|_{s-2}+C\|v\|_{s}^{2}\left\|u-v_{1}\right\|_{s-1} \nonumber\\
\leq& C\|z\|_{s\times(s-1)}\left(\|z\|_{s\times(s-1)}+\|v\|_{s\times(s-1)}
\left\|\rho-v_{2}\right\|_{s-1}\right)+C\|v\|_{s}^{2}\left\|u-v_{1}\right\|_{s} \nonumber\\
\leq& C\left(\|z\|_{s\times(s-1)}^{2}+\|v\|_{s\times(s-1)}^{2}\right)\|z-v\|_{s\times(s-1)}.
\end{align*}
Similarly, we can estimate
\begin{eqnarray*}
\left\|u^{3}-v_{1}^{3}\right\|_{s-1}\leq C\left(\|z\|_{s\times(s-1)}^{2}+\|v\|_{s\times(s-1)}^{2}\right)\left\|z-v\right\|_{s\times(s-1)},\\
\label{207}
\left\|uu_{x}^{2}-v_{1}v_{1x}^{2}\right\|_{s-1}
\leq C\left(\|z\|_{s\times(s-1)}^{2}+\|v\|_{s\times(s-1)}^{2}\right)\left\|z-v\right\|_{s\times(s-1)},\\
\left\|u\rho^{2}-v_{1}v_{2}^{2}\right\|_{s-1}
\leq C\left(\|z\|_{s\times(s-1)}^{2}+\|v\|_{s\times(s-1)}^{2}\right)\left\|z-v\right\|_{s\times(s-1)}.
\label{208}
\end{eqnarray*}
Meanwhile, from Lemma \ref{lem206}, we know that $u\mapsto g(u)$ is a $C^{1}$-map from $H^{s-1}$ to $H^{s-1}$. By using the mean value theorem, we can infer that there is some $M>0$, depending only on $\max\{\|u\|_{s},\|v_{1}\|_{s}\}$ such that
$$\|g(u)-g(v_{1})\|_{s-1}\leq M\|u-v_{1}\|_{s-1}\leq M\|u-v_{1}\|_{s}
\leq M\|z-v\|_{s\times(s-1)}.$$
Therfore, we get
\begin{equation*}
\|f(z)-f(v)\|_{s\times(s-1)}\leq \mu_{3}\|z-v\|_{s\times(s-1)},
\end{equation*}
where $\mu_{3}=\mu_{3}(\max\{\|z\|_{s\times(s-1)}, \|v\|_{s\times(s-1)}\})$.
Take $v=0$ in the above inequality, we obtain that $f(z)$ is bounded on bound set in $H^{s}\times H^{s-1}$.

Second, similar to the above process, we can get
\begin{equation*}
\|f(z)-f(v)\|_{(s-1)\times(s-2)}\leq\mu_{4}\|z-v\|_{(s-1)\times(s-2)},
\end{equation*}
where $\mu_{4}=\mu_{4}(\max\{\|z\|_{(s-1)\times(s-2)}, \|v\|_{(s-1)\times(s-2)}\})$.
This completes the proof of Lemma \ref{lem2010}.
\end{proof}

Combining Lemmas \ref{lem206}--\ref{lem2010} and applying Kato's theorem, we can obtain the following local well-posedness result.
\begin{theorem}[Local well-posedness]
\label{the201} Given $z_{0}:=\left(
\begin{array}{cc}  u_{0}\\ \rho_{0}\end{array} \right)\in H^{s}\times H^{s-1} (s>3/2)$, then there exists a maximal time $T=T(\lambda, z_{0})>0$ and a unique solution $z:=\left(
\begin{array}{cc}  u\\ \rho\end{array} \right)$ to \eqref{101} such that
$$z=z(\cdot,z_{0})\in C([0,T);H^{s}\times H^{s-1})\cap C^{1}([0,T);H^{s-1}\times H^{s-2}).$$
Moreover, the solution $z$ depends continuously on the initial data, i.e., the map
$$z_{0}\mapsto z(\cdot,z_{0}):H^{s}\times H^{s}\rightarrow C([0,T);H^{s}\times H^{s})\cap C^{1}([0,T);H^{s-1}\times H^{s-1})$$
is continuous.
\end{theorem}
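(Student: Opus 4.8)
The plan is to verify the three structural hypotheses $(i)$--$(iii)$ of Kato's theorem (Theorem~\ref{the1}) for the reformulated system~\eqref{202}, with the concrete choices $Y=H^{s}\times H^{s-1}$, $X=H^{s-1}\times H^{s-2}$, the operator $A(z)$ and the nonlinearity $f(z)$ recorded above, and $Q=\mathrm{diag}(\Lambda,\Lambda)$. Since $Q$ is already seen to be an isomorphism of $Y$ onto $X$, the ambient framework of Kato's theorem is in place, and the whole argument reduces to assembling the preceding lemmas and then invoking Theorem~\ref{the1}. Concretely, I would organize the verification hypothesis by hypothesis, each time pointing to the lemma that supplies it together with the observation that the relevant constant depends only on the appropriate norm of $z,v$ and is therefore uniform on bounded sets.

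For condition $(i)$ I would split the requirement into its two parts. The boundedness $A(z)\in L(Y,X)$ and the Lipschitz bound $\|(A(z)-A(v))w\|_{X}\le \rho_{1}\|z-v\|_{X}\|w\|_{Y}$ are exactly the content of Lemma~\ref{lem208} (with $\rho_{1}=\mu_{1}$). The quasi-$m$-accretiveness $A(z)\in G(X,1,\beta)$, uniformly on bounded sets of $Y$, is Lemma~\ref{lem207}; here I would note that the $\beta$ produced there is $C\|z\|_{s\times(s-1)}+|k|\|z\|^{2}_{s\times(s-1)}$, so it is indeed controlled uniformly on $Y$-balls. Lemma~\ref{lem206}, the $L^{2}\times L^{2}$ accretiveness, enters only as the input to the semigroup-perturbation step inside Lemma~\ref{lem207} and needs no separate citation at this stage.

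Conditions $(ii)$ and $(iii)$ are then direct. The decomposition $QA(z)Q^{-1}=A(z)+B(z)$ with $B(z)\in L(X)$ bounded on bounded sets and the Lipschitz estimate $\|(B(z)-B(v))w\|_{X}\le \rho_{2}\|z-v\|_{Y}\|w\|_{X}$ are supplied by Lemma~\ref{lem209} (with $\rho_{2}=\mu_{2}$). That $f$ maps $Y$ to $Y$, is bounded on bounded sets, extends to $X$, and obeys the two Lipschitz bounds in the $Y$- and $X$-norms is precisely Lemma~\ref{lem2010}(a)--(b) (with $\rho_{3}=\mu_{3}$, $\rho_{4}=\mu_{4}$); the uniformity of $\mu_{3},\mu_{4}$ on the required sets holds because each depends only on $\max\{\|z\|_{Y},\|v\|_{Y}\}$, respectively $\max\{\|z\|_{X},\|v\|_{X}\}$, as demanded. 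With $(i)$--$(iii)$ in hand, Theorem~\ref{the1} yields a maximal $T>0$ depending only on $\|z_{0}\|_{Y}$ and a unique $z\in C([0,T);Y)\cap C^{1}([0,T);X)$ with continuous dependence on $z_{0}$; rewriting $Y$ and $X$ gives the asserted regularity, and the dependence of $T$ on $\lambda$ is inherited from the $-\lambda u$ term in $f$.

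I do not expect any serious obstacle at the level of the theorem itself: all the genuine analysis has already been absorbed into the lemmas, the heaviest pieces being the commutator and product estimates of Lemmas~\ref{lem203}--\ref{lem204} that underlie the accretiveness of the variable-coefficient transport operator $ku^{2}\partial_{x}$ on the higher Sobolev scale $X$ (Lemma~\ref{lem207}) and the bound on $B(z)$ (Lemma~\ref{lem209}). The only points requiring care are bookkeeping ones: matching Kato's abstract norms to the product spaces $Y,X$ exactly, and confirming that every constant $\rho_{i}$ is uniform on the bounded sets specified in the hypotheses. Once those are checked, the conclusion is immediate from Theorem~\ref{the1}.
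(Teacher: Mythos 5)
Your proposal is correct and follows essentially the same route as the paper: the paper's proof of Theorem \ref{the201} consists precisely of combining Lemmas \ref{lem206}--\ref{lem2010} to verify hypotheses $(i)$--$(iii)$ and then invoking Kato's theorem, with the same assignments of lemmas to hypotheses that you make. Your additional bookkeeping remarks (uniformity of $\beta$ and the $\mu_i$ on bounded sets, and the role of the $-\lambda u$ term in the dependence $T=T(\lambda,z_0)$) are consistent with, and slightly more explicit than, the paper's one-line synthesis.
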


\section{Wave breaking}
In this section, we will derive the necessary and sufficient condition for occurrence of wave breaking.
In what follows, we give a conserved quantity of solutions $z$ for \eqref{101} with the initial data $z_{0}(x)$.
\begin{lemma}
\label{lem301} Let $z_{0}:=\left(
\begin{array}{cc}  u_{0}\\ \rho_{0}\end{array} \right)\in H^{s}\times H^{s-1} (s>3/2)$, and $T=T(\lambda, z_{0})>0$ be the maximal existence time of the corresponding solution $z:=\left(\begin{array}{cc}  u\\ \rho\end{array} \right)$ to \eqref{101}. Then for all $t\in [0,T)$, we have
\begin{equation*}
\int_{\mathbb{R}}(u^{2}+u_{x}^{2})(t,x)dx=e^{-2\lambda t}\int_{\mathbb{R}}(u_{0}^{2}+u_{0x}^{2})dx,\ \
\int_{\mathbb{R}}\rho^{2}(t,x)dx=\int_{\mathbb{R}}\rho_{0}^{2}dx.
\end{equation*}
\end{lemma}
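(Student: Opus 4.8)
The plan is to convert the two stated integral identities into two linear ODEs in time, namely $E'(t)=-2\lambda E(t)$ for $E(t):=\int_{\mathbb{R}}(u^{2}+u_{x}^{2})\,dx$ and $F'(t)=0$ for $F(t):=\int_{\mathbb{R}}\rho^{2}\,dx$, and then integrate. The convenient variable for the first equation is the momentum $m:=u-u_{xx}=\Lambda^{2}u$, in terms of which the first equation of \eqref{101} reads $m_{t}+\lambda m+(g(u))_{x}=k(3uu_{x}u_{xx}+u^{2}u_{xxx})+k\rho(u\rho)_{x}$. Since $\Lambda^{2}$ is self-adjoint on $L^{2}(\mathbb{R})$, one integration by parts gives $E(t)=\int_{\mathbb{R}}um\,dx$, and $\frac{d}{dt}\int_{\mathbb{R}}um\,dx=\int_{\mathbb{R}}(u_{t}m+um_{t})\,dx=2\int_{\mathbb{R}}um_{t}\,dx$, so everything reduces to evaluating $\int_{\mathbb{R}}um_{t}\,dx$.

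For the $u$-identity I would substitute $m_{t}$ from the momentum equation and show that every nonlinear contribution integrates to zero. The potential term vanishes because $\int_{\mathbb{R}}u(g(u))_{x}\,dx=\int_{\mathbb{R}}ug'(u)u_{x}\,dx=\int_{\mathbb{R}}\partial_{x}P(u)\,dx=0$, where $P'(s)=sg'(s)$; the cubic term vanishes since $\int_{\mathbb{R}}u(3uu_{x}u_{xx}+u^{2}u_{xxx})\,dx=\int_{\mathbb{R}}(3u^{2}u_{x}u_{xx}+u^{3}u_{xxx})\,dx=0$ after integrating $u^{3}u_{xxx}$ by parts once; and the coupling term vanishes because $\int_{\mathbb{R}}u\rho(u\rho)_{x}\,dx=\int_{\mathbb{R}}(uu_{x}\rho^{2}+u^{2}\rho\rho_{x})\,dx=0$ after writing $u^{2}\rho\rho_{x}=\frac{1}{2}u^{2}(\rho^{2})_{x}$ and integrating by parts. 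The only surviving term is $-\lambda\int_{\mathbb{R}}um\,dx=-\lambda E(t)$, whence $E'(t)=-2\lambda E(t)$ and $E(t)=e^{-2\lambda t}E(0)$, which is the first identity.

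For the $\rho$-identity I would multiply the second equation of \eqref{101} by $\rho$ and integrate. The two nonlinear contributions are $k\int_{\mathbb{R}}u^{2}\rho\rho_{x}\,dx=\frac{k}{2}\int_{\mathbb{R}}u^{2}(\rho^{2})_{x}\,dx=-k\int_{\mathbb{R}}uu_{x}\rho^{2}\,dx$ and $k\int_{\mathbb{R}}\rho^{2}uu_{x}\,dx$, which cancel exactly, leaving $\frac{1}{2}F'(t)=0$, so $F(t)=F(0)$. This gives the second identity.

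The main obstacle is not the algebra but its justification at the available regularity. With $z\in C([0,T);H^{s}\times H^{s-1})$ and only $s>3/2$, the individual terms $u^{3}u_{xxx}$, $3uu_{x}u_{xx}$, etc., are not classically defined, so the integrations by parts above are not immediately licit. The standard way around this is to prove the two identities first for smooth data, where $u_{xxx}$ and all products are genuine $L^{2}$ functions, the interchange of $\frac{d}{dt}$ with $\int_{\mathbb{R}}$ is valid, and all boundary terms at $\pm\infty$ vanish by the decay of Sobolev functions; and then to extend to general $z_{0}\in H^{s}\times H^{s-1}$ by a density/approximation argument, invoking the continuity of the data-to-solution map furnished by Theorem \ref{the201}. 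Since both sides of each identity depend continuously on $z$ in the solution space, passing to the limit along smooth approximations $z_{0}^{(n)}\to z_{0}$ yields the identities for the original solution.
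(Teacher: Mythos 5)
Your proposal is correct and follows essentially the same route as the paper: both multiply the $u$-equation by $u$ (your momentum formulation $\frac{d}{dt}\int_{\mathbb{R}}um\,dx=2\int_{\mathbb{R}}um_{t}\,dx$ is just a repackaging of the paper's computation with $2u(u_{t}-u_{txx}+\cdots)$), kill the terms $\int u(g(u))_{x}$, $\int(3u^{2}u_{x}u_{xx}+u^{3}u_{xxx})$, $\int u\rho(u\rho)_{x}$ by the same integrations by parts, and multiply the $\rho$-equation by $\rho$ to get exact cancellation. Your closing paragraph on justifying the formal computation at regularity $s>3/2$ via smooth approximation and continuous dependence is a sound extra precaution that the paper's proof simply omits.
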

\begin{proof}
Multiplying the first equation in \eqref{101} by $2u$, we get
\begin{equation}
2uu_{t}-2uu_{txx}+2u\left(g(u)\right)_{x}+2\lambda(u^{2}-uu_{xx})
=2k\left(3u^{2}u_{x}u_{xx}+u^{3}u_{xxx}\right)+2k u\rho(u\rho)_{x}.
\label{301}
\end{equation}
Integrating \eqref{301} with respect to $x$ over $\mathbb{R}$, we get
\begin{equation}
\frac{d}{dt}\int_{\mathbb{R}}(u^{2}+u_{x}^{2})dx+2\lambda \int_{\mathbb{R}}(u^{2}+u_{x}^{2})dx=0,
\label{302}
\end{equation}
here we used the relations that
\begin{align*}
2\int_{\mathbb{R}}uu_{txx}dx&=-2\int_{\mathbb{R}}u_{x}u_{tx}dx,\
2\int_{\mathbb{R}}uu_{xx}dx=-2\int_{\mathbb{R}}u^{2}_{x}dx,\\
\int_{\mathbb{R}}u^{3}u_{xxx}dx&=-3\int_{\mathbb{R}}u^{2}u_{x}u_{xx}dx,\
2\int_{\mathbb{R}}u(g(u))_{x}dx=-2\int_{\mathbb{R}}G'(u(x))dx,
\end{align*}
where $G(u)=\int_{0}^{u}g(v)dv$.

Integrating \eqref{302} with respect to $t$ over $[0,t]$, we get
\begin{equation*}
\int_{\mathbb{R}}(u^{2}+u_{x}^{2})(t,x)dx=e^{-2\lambda t}\int_{\mathbb{R}}(u_{0}^{2}+u_{0x}^{2})dx.
\end{equation*}

On the other hand, multiplying the second equation in \eqref{101} by $2\rho$, we get
\begin{equation*}
2\rho\rho_{t}=-2k\rho\rho_{x} u^{2}-2k \rho^{2}uu_{x}.
\end{equation*}
Integrating it with respect to $t$ and $x$ over $[0,t]\times \mathbb{R}$, we get
\begin{equation*}
\int_{\mathbb{R}}\rho^{2}(t,x)dx=\int_{\mathbb{R}}\rho_{0}^{2}dx.
\end{equation*}
This completes the proof of Lemma \ref{lem301}.
\end{proof}

In what following, we will derive the necessary and sufficient condition for occurrence of wave breaking. To this end, we can rewrite system \eqref{202} as follows
\begin{equation}
\begin{cases}
y_{t}+k u^{2}y_{x}+3k uu_{x}y=-\left(g(u)-\frac{4k}{3}u^{3}\right)_{x}-\lambda y+k \rho(u\rho)_{x},\\
\rho_{t}+k u^{2}\rho_{x}+k \rho uu_{x}=0,
\label{303}
\end{cases}
\end{equation}
where $y=u-u_{xx}$ denotes the momentum density.

In what follows, we present the some lemmas which are crucial in the proof of wave breaking mechanism.
\begin{lemma}[\cite{Kato1988}]
\label{lem302}
If  $r>0$, then  $H^{r}(\mathbb{R})\cap L^{\infty}(\mathbb{R})$ is an algebra. Moreover
$$\|fg\|_{r}\leq c\left(\|f\|_{L^{\infty}(\mathbb{R})}\|g\|_{r}+\|f\|_{r}\|g\|_{L^{\infty}(\mathbb{R})}\right),$$
where $c$ is a constant depending only on $r$.
\end{lemma}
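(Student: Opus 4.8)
The algebra property is an immediate consequence of the displayed inequality together with the trivial bound $\|fg\|_{L^\infty}\le\|f\|_{L^\infty}\|g\|_{L^\infty}$, so the entire content is the product estimate. My plan is to prove it by a Littlewood--Paley (Bony paraproduct) decomposition. Let $\Delta_j$ denote the dyadic frequency projections and $S_j=\sum_{k<j}\Delta_k$ the low-frequency cut-offs, and write
$$fg=\sum_j S_{j-1}f\,\Delta_j g+\sum_j S_{j-1}g\,\Delta_j f+\sum_{|j-k|\le1}\Delta_j f\,\Delta_k g=:T_f g+T_g f+R(f,g).$$
Since $\|\cdot\|_r^2\sim\sum_j 2^{2jr}\|\Delta_j(\cdot)\|_{L^2}^2$, each piece is then handled according to its frequency-support pattern.

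For the first paraproduct the spectrum of $S_{j-1}f\,\Delta_j g$ sits in a fixed dyadic annulus of size $\sim 2^j$, so the summands are almost orthogonal and I would estimate
$$\|T_f g\|_r^2\lesssim\sum_j 2^{2jr}\|S_{j-1}f\|_{L^\infty}^2\|\Delta_j g\|_{L^2}^2\lesssim\|f\|_{L^\infty}^2\sum_j 2^{2jr}\|\Delta_j g\|_{L^2}^2\sim\|f\|_{L^\infty}^2\|g\|_r^2,$$
using the uniform bound $\|S_{j-1}f\|_{L^\infty}\le c\|f\|_{L^\infty}$ (the cut-off kernels are $L^1$-normalized). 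This is precisely the term producing the $\|f\|_{L^\infty}\|g\|_r$ contribution; by the symmetric roles of $f$ and $g$ the second paraproduct gives $\|T_g f\|_r\lesssim\|g\|_{L^\infty}\|f\|_r$.

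The remainder $R(f,g)$ is the only delicate point, and it is exactly where the hypothesis $r>0$ enters. The spectrum of $\Delta_j f\,\Delta_k g$ with $|j-k|\le1$ lies in a \emph{ball} of radius $\lesssim 2^j$ rather than an annulus, so almost-orthogonality is lost and I must instead invoke the reconstruction estimate ``$\|\sum_j u_j\|_r\lesssim\|(2^{jr}\|u_j\|_{L^2})_j\|_{\ell^2}$ for $r>0$ whenever $\widehat{u_j}$ is supported in a ball $B(0,c2^j)$''. Applying it with $u_j=\Delta_j f\,\tilde\Delta_j g$ and bounding $\|u_j\|_{L^2}\le\|\Delta_j f\|_{L^2}\|\tilde\Delta_j g\|_{L^\infty}\lesssim\|\Delta_j f\|_{L^2}\|g\|_{L^\infty}$ yields $\|R(f,g)\|_r\lesssim\|f\|_r\|g\|_{L^\infty}$. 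Summing the three bounds gives the claim, with $c$ depending only on $r$ through the implicit constants of the dyadic calculus.

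I expect the main obstacle to be precisely the convergence of this remainder sum. The tempting elementary route --- using the subadditivity inequality $\langle\xi\rangle^r\le c_r(\langle\xi-\eta\rangle^r+\langle\eta\rangle^r)$ together with Young's inequality on $\widehat{fg}=\hat f*\hat g$ --- is clean but only delivers the weaker estimate with $\|\hat f\|_{L^1}$ in place of $\|f\|_{L^\infty}$, so it cannot reach the stated $L^\infty$ form. Obtaining the genuine $L^\infty$ bound forces either the paraproduct argument above or an equivalent Coifman--Meyer multiplier estimate; alternatively one may prove the integer-order cases by the Leibniz rule and Gagliardo--Nirenberg interpolation and then fill in fractional $r$ by complex interpolation, but the borderline degeneration as $r\downarrow0$ in the remainder term is unavoidable in every approach.
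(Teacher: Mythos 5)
Your proposal is correct, but it cannot be compared against an in-paper argument: the paper states this lemma without proof, quoting it directly from Kato--Ponce \cite{Kato1988}, so the only ``paper proof'' is that citation. The original Kato--Ponce route is different from yours: they derive the product and commutator estimates from Coifman--Meyer-type bilinear multiplier theory (together with the fractional Leibniz machinery), whereas you give the now-standard Bony paraproduct proof in the style of Bahouri--Chemin--Danchin. Your version is sound in all three pieces: the two paraproducts $T_f g$, $T_g f$ have annulus-supported spectra, so almost orthogonality plus $\|S_{j-1}f\|_{L^\infty}\lesssim\|f\|_{L^\infty}$ gives those terms for \emph{every} real $r$, and the remainder correctly isolates where $r>0$ enters, via the ball-support reconstruction lemma whose proof is Young's inequality for the $\ell^1$ convolution kernel $(2^{-mr})_{m\geq -c}$. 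What your route buys over the cited one is that it is self-contained, makes the role of $r>0$ transparent, and generalizes immediately to $L^p$-based Besov and Triebel--Lizorkin scales; what the Kato--Ponce formulation buys is the companion commutator estimate (the paper's Lemma 3.3), which the authors also need and which comes out of the same multiplier machinery in one stroke, whereas in the paraproduct framework it requires a separate (if similar) argument. Two small accuracy points: your remark that the elementary $\langle\xi\rangle^r$-subadditivity argument only yields $\|\hat f\|_{L^1}$ in place of $\|f\|_{L^\infty}$ is exactly right and worth stating, since $\|f\|_{L^\infty}\leq\|\hat f\|_{L^1}$ makes that bound strictly weaker; but your closing claim that the degeneration as $r\downarrow 0$ is ``unavoidable in every approach'' slightly overstates matters --- at $r=0$ the estimate $\|fg\|_{L^2}\leq\|f\|_{L^\infty}\|g\|_{L^2}$ holds trivially with constant $1$, so only the constant produced by the paraproduct remainder blows up, not the inequality itself.
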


\begin{lemma}[\cite{Kato1988}]
\label{lem303}
If $r>0$, then
$$\left\|\left[\Lambda^{r}, f\right] g\right\|_{L^{2}(\mathbb{R})} \leq c\left(\left\|\partial_{x} f\right\|_{L^{\infty}(\mathbb{R})}\left\|\Lambda^{r-1} g\right\|_{L^{2}(\mathbb{R})}+\left\|\Lambda^{r} f\right\|_{L^{2}(\mathbb{R})}\|g\|_{L^{\infty}(\mathbb{R})}\right),$$
where $c$ is a constant depending only on $r$.
\end{lemma}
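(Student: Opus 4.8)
The plan is to pass to the Fourier side and exploit a frequency decomposition of the bilinear symbol attached to the commutator. Writing $[\Lambda^{r},f]g=\Lambda^{r}(fg)-f\Lambda^{r}g$ and taking Fourier transforms, one finds
\[
\widehat{[\Lambda^{r},f]g}(\zeta)=\int_{\mathbb{R}}\big(\langle\zeta\rangle^{r}-\langle\eta\rangle^{r}\big)\,\hat f(\zeta-\eta)\,\hat g(\eta)\,d\eta,\qquad \langle\cdot\rangle=(1+|\cdot|^{2})^{1/2}.
\]
With the substitution $\xi=\zeta-\eta$ (the frequency carried by $f$), this is a bilinear Fourier multiplier with symbol $m(\xi,\eta)=\langle\xi+\eta\rangle^{r}-\langle\eta\rangle^{r}$. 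The entire idea is to split $m$ according to the relative sizes of $|\xi|$ and $|\eta|$, so that each piece converts into exactly one of the two terms on the right-hand side of the claimed estimate.

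First I would perform a Littlewood--Paley decomposition $f=\sum_{j}\Delta_{j}f$, $g=\sum_{k}\Delta_{k}g$ and separate the double sum into the three standard paraproduct regions. In the low-high region, where $f$ sits at frequency $2^{j}$ much below the frequency $2^{k}$ of $g$, I use the elementary mean value bound $|\partial_{t}\langle t\rangle^{r}|=r|t|\langle t\rangle^{r-2}\le r\langle t\rangle^{r-1}$ together with $\langle\xi+\eta\rangle\sim\langle\eta\rangle$ to obtain the pointwise estimate $|m(\xi,\eta)|\lesssim|\xi|\,\langle\eta\rangle^{r-1}$. The factor $|\xi|$ attaches to $\hat f$ and produces $\partial_{x}f$, measured in $L^{\infty}$, while $\langle\eta\rangle^{r-1}$ attaches to $\hat g$ and produces $\Lambda^{r-1}g$, measured in $L^{2}$; summing the dyadic pieces via Bernstein's inequality and almost orthogonality yields the contribution $\|\partial_{x}f\|_{L^{\infty}}\|\Lambda^{r-1}g\|_{L^{2}}$.

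Next, in the high-low region, where $g$ carries the lower frequency, I bound the symbol crudely by $|m(\xi,\eta)|\lesssim\langle\xi+\eta\rangle^{r}+\langle\eta\rangle^{r}\lesssim\langle\xi\rangle^{r}$, using $\langle\xi+\eta\rangle\sim\langle\xi\rangle$ and $\langle\eta\rangle\lesssim\langle\xi\rangle$. Here $\langle\xi\rangle^{r}$ attaches to $\hat f$ to produce $\Lambda^{r}f$ in $L^{2}$, while the low-frequency factor of $g$ stays controlled in $L^{\infty}$, giving the second contribution $\|\Lambda^{r}f\|_{L^{2}}\|g\|_{L^{\infty}}$. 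The diagonal region $|j-k|\le2$ of comparable frequencies is then folded into this estimate after placing one full power $\langle\cdot\rangle^{r}$ onto $f$ and keeping $g$ in $L^{\infty}$.

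The step I expect to be the main obstacle is precisely the diagonal (high-high) interaction: there the crude size bound on $m$ does not by itself decouple the two inputs, and one must invoke an almost-orthogonality / Coifman--Meyer bilinear multiplier argument to re-sum the dyadic pieces and return the output to $L^{2}$ without losing derivatives. Verifying that the localized symbol pieces satisfy the requisite Hörmander--Mikhlin type bounds $|\partial_{\xi}^{\alpha}\partial_{\eta}^{\beta}m|\lesssim(|\xi|+|\eta|)^{-|\alpha|-|\beta|}$ on the relevant annuli (after extracting the appropriate weight) is the technical heart of the argument; it is also where the restriction to $r>0$, and in particular the low-regularity form of the mean value bound when $0<r<1$, must be handled with care. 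Once the three regions are assembled, the triangle inequality delivers the stated inequality with a constant depending only on $r$.
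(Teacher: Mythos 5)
The paper does not prove this lemma at all: it is quoted verbatim as the Kato--Ponce commutator estimate, with the proof delegated to the cited reference [Kato--Ponce 1988], so there is no in-paper argument to match yours against. Your blind proposal reconstructs the standard modern paradifferential proof, and its architecture is sound: the symbol identity $m(\xi,\eta)=\langle\xi+\eta\rangle^{r}-\langle\eta\rangle^{r}$ is correct, the mean-value bound $|m|\lesssim|\xi|\langle\eta\rangle^{r-1}$ in the low-high regime and the crude bound $|m|\lesssim\langle\xi\rangle^{r}$ in the high-low regime are exactly the right pointwise inputs, and you correctly locate where $r>0$ is indispensable (resumming the high-high remainder, where the output at frequency $2^{j}$ collects contributions from all $k\geq j-c$ with a gain $2^{(j-k)r}$ that is summable only for $r>0$). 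This route differs from the cited source: Kato and Ponce derive the estimate from Coifman--Meyer multilinear-operator theory applied to a first-order expansion of the symbol, rather than from an explicit Littlewood--Paley trisection; your approach is the one found in modern textbooks and is more transparent about where each hypothesis enters. Two caveats keep your text a sketch rather than a proof: pointwise symbol bounds do not by themselves yield $L^{\infty}\times L^{2}\to L^{2}$ boundedness, so the low-high piece must be run through the standard commutator-paraproduct lemma (e.g. $\|[\Lambda^{r},S_{k-3}f]\Delta_{k}g\|_{L^{2}}\lesssim\|\partial_{x}f\|_{L^{\infty}}\,2^{k(r-1)}\|\Delta_{k}g\|_{L^{2}}$, then summed by almost orthogonality), and the diagonal interaction you yourself flag as the technical heart is asserted rather than executed --- in a complete write-up you would either verify the Mikhlin-type derivative bounds on the rescaled, weight-extracted symbol, or more simply estimate the high-high remainder blockwise with Bernstein's inequality, which avoids bilinear multiplier machinery entirely. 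Since both deferred steps are standard, the proposal is correct in outline and consistent with the literature the paper cites, but incomplete as written.
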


\begin{theorem}
\label{the301}
Let $z_{0}:=\left(
\begin{array}{cc} u_{0}\\ \rho_{0}\end{array}\right)\in H^{s}\times H^{s-1} (s>3/2)$ be given. And assume that $T$ is the maximal existence time of the corresponding solution $z=\left(\begin{array}{cc}u\\ \rho\end{array}\right)$ to \eqref{202} with the initial data $z_{0}$. If there exists $M>0$, such that
\begin{equation*}
\|u_{x}(t,\cdot)\|_{L^{\infty}}+\|\rho(t,\cdot)\|_{L^{\infty}}
+\|\rho_{x}(t,\cdot)\|_{L^{\infty}}\leq M,\ \ t\in[0,T),
\end{equation*}
then the $H^{s}\times H^{s-1}$-norm of $z(t,\cdot)$ does not blow up on $[0,T)$.
\end{theorem}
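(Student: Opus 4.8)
The plan is to run a standard energy estimate on the reformulated system \eqref{202}, bounding the quantity $\|u\|_s^2+\|\rho\|_{s-1}^2$ by a \emph{linear} Gronwall inequality whose coefficient depends only on the $L^\infty$-bound $M$ and on a priori controlled quantities. The first step is to collect the available bounds. By Lemma \ref{lem301} the energy $\int_{\mathbb{R}}(u^2+u_x^2)\,dx=e^{-2\lambda t}\int_{\mathbb{R}}(u_0^2+u_{0x}^2)\,dx$ is nonincreasing, so $\|u(t)\|_{1}$ stays bounded on $[0,T)$ and hence, by Sobolev embedding, $\|u(t)\|_{L^\infty}$ is dominated by a constant depending only on $z_0$. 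Together with the hypothesis this forces all four quantities $\|u\|_{L^\infty},\|u_x\|_{L^\infty},\|\rho\|_{L^\infty},\|\rho_x\|_{L^\infty}$ to be bounded by a single constant $\widetilde M=\widetilde M(M,z_0)$ throughout $[0,T)$; every constant below may depend on $\widetilde M$, $k$, $\lambda$, $s$ and $g$, but never on $\|u\|_s$ or $\|\rho\|_{s-1}$.

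For the $u$-component I would apply $\Lambda^s$ to the first equation of \eqref{202}, pair with $\Lambda^s u$ in $L^2$, and obtain $\tfrac12\tfrac{d}{dt}\|u\|_s^2=-k(\Lambda^s(u^2u_x),\Lambda^s u)_0+(\Lambda^s F,\Lambda^s u)_0$, where $F$ is the right-hand side. The only derivative-losing term is the transport term; writing $\Lambda^s(u^2u_x)=[\Lambda^s,u^2]u_x+u^2\Lambda^s u_x$, the commutator is handled by Lemma \ref{lem303} as $\|[\Lambda^s,u^2]u_x\|_0\le c(\|\partial_x(u^2)\|_{L^\infty}\|u_x\|_{s-1}+\|u^2\|_s\|u_x\|_{L^\infty})\le C\|u\|_s$, using $\|u^2\|_s\le c\|u\|_{L^\infty}\|u\|_s$ (Lemma \ref{lem302}) and the $L^\infty$-bounds, while $u^2\Lambda^s u_x$ integrates by parts into $-(uu_x\Lambda^s u,\Lambda^s u)_0\le\|uu_x\|_{L^\infty}\|u\|_s^2$. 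Since $\Lambda^{-2}$ and $\partial_x\Lambda^{-2}$ gain two and one derivatives respectively, the forcing satisfies
\[
\|F\|_s\le C\big(\|u_x^3\|_{s-2}+\|u_x\rho^2\|_{s-2}+\|g(u)\|_{s-1}+\|uu_x^2\|_{s-1}+\|u^3\|_{s-1}+\|u\rho^2\|_{s-1}\big)+\lambda\|u\|_s,
\]
and each term is linear in $\|u\|_s+\|\rho\|_{s-1}$ by repeated use of Lemma \ref{lem302}, all the hard factors being absorbed into the controlled $L^\infty$-norms. In the low-regularity range $3/2<s<2$ the negative-index norms $\|\cdot\|_{s-2}$ are simply dominated by $\|\cdot\|_0$ (as $\Lambda^{s-2}$ is then a contraction on $L^2$), so e.g. $\|u_x^3\|_{s-2}\le\|u_x\|_{L^\infty}^2\|u_x\|_0\le\widetilde M^2\|u\|_s$. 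This yields $\tfrac{d}{dt}\|u\|_s^2\le C(\|u\|_s^2+\|\rho\|_{s-1}^2)$.

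An identical computation on the second equation of \eqref{202}, applying $\Lambda^{s-1}$ and pairing with $\Lambda^{s-1}\rho$, handles the $\rho$-component: the transport commutator $[\Lambda^{s-1},u^2]\rho_x$ is controlled by Lemma \ref{lem303} using $\|\rho_x\|_{L^\infty}\le\widetilde M$, and the source $-k\rho uu_x$ is estimated in $\|\cdot\|_{s-1}$ via Lemma \ref{lem302} with the $\|\rho\|_{L^\infty}$ and $\|uu_x\|_{L^\infty}$ factors, giving $\tfrac{d}{dt}\|\rho\|_{s-1}^2\le C(\|u\|_s^2+\|\rho\|_{s-1}^2)$. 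Adding the two inequalities and invoking Gronwall's lemma produces $\|u(t)\|_s^2+\|\rho(t)\|_{s-1}^2\le(\|u_0\|_s^2+\|\rho_0\|_{s-1}^2)e^{Ct}$, which is finite for every $t<T$; hence the $H^s\times H^{s-1}$-norm cannot blow up on $[0,T)$. The formal manipulations (applying $\Lambda^s$ to functions of exactly $H^s$ regularity) are justified in the usual way by inserting Friedrichs mollifiers, deriving the estimates uniformly, and passing to the limit.

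The step I expect to be the main obstacle is ensuring that the entire right-hand side is genuinely \emph{linear} in the top-order norms $\|u\|_s$ and $\|\rho\|_{s-1}$, so that the Gronwall coefficient stays bounded; this is exactly what separates a blow-up criterion from a mere local estimate. The delicate point is the term $g(u)$: Lemma \ref{lem205} only provides the nonlinear bound $\|g(u)\|_{s-1}\le\widetilde g(\|u\|_{s-1})$, which would \emph{not} close the argument. Instead one must use the sharper composition (Moser-type) estimate $\|g(u)\|_{s-1}\le C(\|u\|_{L^\infty})\|u\|_{s-1}$, valid for $g\in C^\infty$ with $g(0)=0$ and $s-1>1/2$, whose constant is finite precisely because $\|u\|_{L^\infty}$ is already under control through the conservation law of Lemma \ref{lem301}. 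Managing this linearization, together with the careful bookkeeping of the negative-order Sobolev norms when $s<2$, is where the real work lies; the commutator manipulations themselves are routine applications of Lemmas \ref{lem302}--\ref{lem303}.
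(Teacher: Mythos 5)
Your proposal is correct and follows essentially the same route as the paper: a $\Lambda^s$/$\Lambda^{s-1}$ energy estimate on the reformulated system \eqref{202}, the Kato--Ponce commutator and product estimates (Lemmas \ref{lem302}--\ref{lem303}) for the transport terms, the smoothing of $\Lambda^{-2}$ and $\partial_x\Lambda^{-2}$ for the forcing, and Gronwall's inequality. Your observation about $g(u)$ is on target --- the paper's display \eqref{306} indeed uses the Moser-type bound $\|g(u)\|_{s-1}\leq\widetilde g(\|u\|_{L^\infty})\|u\|_{s-1}$ combined with the $H^1$ decay of Lemma \ref{lem301} (via $\widetilde g(\|u_0\|_1)$), exactly the linearization you identify as necessary, and your mollifier and $3/2<s<2$ remarks only make explicit what the paper leaves implicit.
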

\begin{proof}
Let $z$ be the unique solution of \eqref{202} with initial data $z_{0}\in H^{s}\times H^{s-1} (s>3/2)$, and $T$ be the maximal existence time of the corresponding solution $z$, which is guaranteed by Theorem \ref{the201}.

Applying the operator $\Lambda^{s}$ to the first equation in \eqref{202}, multiplying by $2\Lambda^{s}u$ and integrating the resulting equation over $\mathbb{R}$, we have
\begin{equation}
\frac{d}{dt}(u,u)_{s}=-2k\left(u^{2}u_{x}, u\right)_{s}+2\left(f_{1}(u),u\right)_{s}+2(f_{2}(u,\rho),u)_{s},
\label{304}
\end{equation}
where
\begin{align*}
f_{1}(u)&=-\frac{k}{2} \Lambda^{-2}\left(u_{x}^{3}\right)-\partial_{x} \Lambda^{-2}\left(g(u)+\frac{3k}{2}uu_{x}^{2}-\frac{k}{3} u^{3}\right)-\lambda u,\\
f_{2}(u,\rho)&=\frac{k}{2} \Lambda^{-2}\left(u_{x} \rho^{2}\right)+\frac{k}{2} \partial_{x} \Lambda^{-2}\left(u \rho^{2}\right).
\end{align*}

In what follows, we estimate the right side of \eqref{304}. If there exists a positive constant $M>0$ such that
\begin{equation*}
\|u_{x}(t,\cdot)\|_{L^{\infty}}+\|\rho(t,\cdot)\|_{L^{\infty}}
+\|\rho_{x}(t,\cdot)\|_{L^{\infty}}\leq M,\ \ t\in[0,T),
\end{equation*}
then we have
\begin{align}
|(u^{2}u_{x},u)_{s}|=&|(\Lambda^{s}(u^{2}u_{x}),\Lambda^{s}u)_{0}|\nonumber\\
=&\left|\left([\Lambda^{s},u^{2}]u_{x},\Lambda^{s}u\right)_{0}
+\left(u^{2}\Lambda^{s}u_{x},\Lambda^{s}u\right)_{0}\right|\nonumber\\
\leq&\left\|[\Lambda^{s},u^{2}]u_{x}\right\|_{L^{2}}\|\Lambda^{s}u\|_{L^{2}}
+\left\|uu_{x}\right\|_{L^{\infty}}\|\Lambda^{s}u\|^{2}_{L^{2}}\nonumber\\
\leq& C\left(\|(u^{2})_{x}\|_{L^{\infty}}\|\Lambda^{s-1}u_{x}\|_{L^{2}}
+\|\Lambda^{s}u^{2}\|_{L^{2}}\|u_{x}\|_{L^{\infty}}\right)\|u\|_{s}+cM\|u\|_{s}^{2}\nonumber\\
\leq& C\left(\|uu_{x}\|_{L^{\infty}}\|u\|_{s}
+M\|u^{2}\|_{s}\right)\|u\|_{s}+cM\|u\|_{s}^{2}\nonumber\\
\leq& C\|u\|_{s}^{2}.
\label{305}
\end{align}
Similarly, note that $H^{s}(s>\frac{1}{2})$ is a Banach algebra, it follows that
\begin{align}
(f_{1}(u),u)_{s}
=&\left(-\frac{k}{2}\Lambda^{-2}\left(u_{x}^{3}\right)
-\partial_{x}\Lambda^{-2}\left(g(u)+\frac{3k}{2}uu_{x}^{2}-\frac{k}{3}u^{3}\right)-\lambda u,u\right)_{s}\nonumber\\
\leq& C\|u\|_{s}\left(\left\|\Lambda^{-2}\left(u_{x}^{3}\right)\right\|_{s}+
\left\|\partial_{x}\Lambda^{-2}\left(g(u)+\frac{3k}{2}uu_{x}^{2}
-\frac{k}{3}u^{3}\right)\right\|_{s}
+\lambda\|u\|_{s}
\right)\nonumber\\
\leq& C\|u\|_{s}\left(\|u_{x}^{3}\|_{s-2}
+\left\|g(u)\right\|_{s-1}+\left\|uu_{x}^{2}\right\|_{s-1}
+\left\|u^{3}\right\|_{s-1}+\lambda\|u\|_{s}
\right)\nonumber\\
\leq& C\|u\|_{s}\left(\|u_{x}^{3}\|_{s-2}
+\widetilde{g}(\|u\|_{L^{\infty}})\left\|u\right\|_{s-1}
+\|u\|_{L^{\infty}}\|u_{x}^{2}\|_{s-1}+\left\|u^{3}\right\|_{s-1}+\lambda\|u\|_{s}
\right)\nonumber\\
\leq& C\|u\|_{s}\left(\|u_{x}^{3}\|_{s-2}
+\widetilde{g}(\|u_{0}\|_{1})\left\|u\right\|_{s}
+\|u\|_{L^{\infty}}\|u_{x}^{2}\|_{s-1}+\left\|u^{3}\right\|_{s-1}+\lambda\|u\|_{s}
\right)\nonumber\\
\leq&(C+\lambda)\|u\|_{s}^{2},
\label{306}
\end{align}
and
\begin{align}
|(f_{2}(u,\rho),u)_{s}|
\leq&\|f_{2}(u,\rho)\|_{s}\|u\|_{s}\nonumber\\
\leq&C(\|\Lambda^{-2}(u_{x}\rho^{2})\|_{s}
+\|\partial_{x}\Lambda^{-2}(u\rho^{2})\|_{s})\|u\|_{s}\nonumber\\
=&C(\|u_{x}\rho^{2}\|_{s-2}\|u\|_{s}+\|u\rho^{2}\|_{s-1}\|u\|_{s}) \nonumber\\
\leq&C(\|u_{x}\|_{L^{\infty}}\|\rho\|_{L^{\infty}}\|\rho\|_{s-1}\|u\|_{s}
+\|u\|_{L^{\infty}}\|\rho\|_{s-1}\|\rho\|_{L^{\infty}}\|u\|_{s}) \nonumber\\
\leq&(CM^{2}+M\|u_{0}\|_{1})(\|\rho\|_{s-1}^{2}+\|u\|_{s}^{2})\nonumber\\
\leq& C(\|u\|_{s}^{2}+\|\rho\|_{s-1}^{2}),
\label{307}
\end{align}
where we used the fact that
\begin{align*}
\|uu_{x}^{2}\|_{s-1}
=&\|[\Lambda^{s-1},u]u_{x}^{2}+u\Lambda^{s-1}u_{x}^{2}\|_{L^{2}}\nonumber\\
\leq& C\left(\|u_{x}\|_{L^{\infty}}\|\Lambda^{s-1}u_{x}^{2}\|_{L^{2}}
+\|\Lambda^{s-1}u^{2}\|_{L^{2}}\|u_{x}^{2}\|_{L^{\infty}}\right)\nonumber\\
\leq&C(M\|u_{x}^{2}\|_{s-1}+M^{2}\|u^{2}\|_{s-1})\nonumber\\
\leq&C(\|u_{x}\|_{s-1}\|u_{x}\|_{L^{\infty}}+\|u\|_{s-1}\|u\|_{L^{\infty}})\nonumber\\
\leq&C\|u\|_{s}.
\end{align*}
Combining \eqref{304}--\eqref{307}, we get
\begin{equation}
\frac{d}{dt}\|u\|_{s}^{2}\leq (C+\lambda)(\|u\|_{s}^{2}+\|\rho\|_{s-1}^{2}).
\label{308}
\end{equation}

In order to derive a similar estimate for the second component $\rho$, we apply the operator $\Lambda^{s-1}$ to the second equation in \eqref{202}, multiply by $2\Lambda^{s-1}\rho$  and integrate the resulting equation over $\mathbb{R}$, we get
\begin{equation}
\frac{d}{dt}\|\rho\|_{s-1}^{2}=-2k(u^{2}\rho_{x},\rho)_{s-1}-2k(\rho uu_{x}, \rho)_{s-1}.
\label{309}
\end{equation}
In what follows, we estimate the right side of \eqref{309}.
\begin{align}
(u^{2}\rho_{x},\rho)_{s-1}&=|(\Lambda^{s-1}(u^{2}\rho_{x}),\Lambda^{s-1}\rho)_{0}| \nonumber\\
&=\left|\left([\Lambda^{s-1},u^{2}]\rho_{x},\Lambda^{s-1}\rho\right)_{0}
+\left(u^{2}\Lambda^{s-1}\rho_{x},\Lambda^{s-1}\rho\right)_{0}\right|\nonumber\\
&\leq\|[\Lambda^{s-1},u^{2}]\rho_{x}\|_{L^{2}}\|\Lambda^{s-1}\rho\|_{L^{2}}
+\|uu_{x}\|_{L^{\infty}}\|\Lambda^{s-1} \rho\|_{L^{2}}^{2}\nonumber\\
&\leq C(\|uu_{x}\|_{L^{\infty}}\|\rho\|_{s-1}
+\|\rho_{x}\|_{L^{\infty}}\|u\|_{L^{\infty}}\|u\|_{s-1})\|\rho\|_{s-1}
+M\|u_{0}\|_{1}\|\rho\|_{s-1}^{2}\nonumber\\
&<C(\|u\|_{s}^{2}+\|\rho\|_{s-1}^{2}),
\label{3010}
\end{align}
\begin{equation}
\|(\rho uu_{x},\rho)\|_{s-1}\leq\|\rho uu_{x}\|_{s-1}\|\rho\|_{s-1}
\leq\|uu_{x}\|_{L^{\infty}}\|\rho\|_{s-1}^{2}\leq C\|\rho\|_{s-1}^{2} \\
\leq C(\|u\|_{s}^{2}+\|\rho\|_{s-1}^{2}).
\label{3011}
\end{equation}
Combining \eqref{309}--\eqref{3011}, we get
\begin{equation}
\frac{d}{dt}\|\rho\|_{s-1}^{2}\leq C(\|u\|_{s}^{2}+\|\rho\|_{s-1}^{2}).
\label{3012}
\end{equation}
Therefore, we get
\begin{equation}
\frac{d}{dt}(\|u\|_{s}^{2}+\|\rho\|_{s-1}^{2})\leq (C+\lambda)(\|u\|_{s}^{2}+\|\rho\|_{s-1}^{2}).
\label{3013}
\end{equation}
Applying Gronwall's inequality, we have
$$\|u\|_{s}^{2}+\|\rho\|_{s-1}^{2}\leq e^{(C+\lambda)t}(\|u_{0}\|_{s}^{2}+\|\rho_{0}\|_{s-1}^{2}).$$
This completes the proof of Theorem \ref{the301}.
\end{proof}

Inspired by Constantin's work \cite{Constantin2000}, we consider the following trajectory equation related to system \eqref{202}
\begin{equation}
\begin{cases}
\frac{dq(t,x)}{dt}=k u^{2}(t,q(t,x)),\ (t,x)\in [0,T)\times \mathbb{R},\\
q(0,x)=x,\ x\in \mathbb{R},
\end{cases}
\label{3014}
\end{equation}
where $u$ denotes the first component of the solution $z$ to \eqref{202}. Applying the classical results in the theory of ODEs, we can obtain the following results on $q$ which is crucial in the proof of blowup scenarios.

Similar to Lemma 3.7 in \cite{Wu2012}, we can give the following lemma but omit the detailed proof.
\begin{lemma}
\label{lem304} Let $u\in C([0,T); H^{s})\cap C^{1}([0,T); H^{s-1}), s>3/2$. Then \eqref{3014} has a unique solution $q(t,x)\in C^{1}([0,T)\times \mathbb{R}, \mathbb{R})$, and $q(t,\cdot)$ is an increasing diffeomorphism of $\mathbb{R}$ with
$$q_{x}(t,x)=e^{\int_{0}^{t}2ku(\tau,q(\tau,x))u_{x}(\tau,q(\tau,x))d\tau}>0,\ (t,x)\in [0,T)\times \mathbb{R}.$$
\end{lemma}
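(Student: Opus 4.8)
The plan is to treat \eqref{3014} as a non-autonomous scalar ordinary differential equation in the spatial variable, with vector field $F(t,q):=ku^{2}(t,q)$, and to invoke the classical theory of ODE flows. The first step is to record the regularity of $F$. Since $u\in C([0,T);H^{s})$ with $s>3/2$, the same Sobolev embedding used throughout Section~\ref{sec:2} gives $u(t,\cdot),u_{x}(t,\cdot)\in L^{\infty}$ with $\|u(t,\cdot)\|_{L^{\infty}},\|u_{x}(t,\cdot)\|_{L^{\infty}}\le\|u(t,\cdot)\|_{s}$, depending continuously on $t$. Hence $F$ is continuous on $[0,T)\times\mathbb{R}$, continuously differentiable in $q$ with $\partial_{q}F=2kuu_{x}$, and both $F$ and $\partial_{q}F$ are bounded on every strip $[0,T']\times\mathbb{R}$ with $T'<T$ (using that $\sup_{[0,T']}\|u(t,\cdot)\|_{s}<\infty$ by continuity). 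In particular $F(t,\cdot)$ is globally Lipschitz in $q$, uniformly on compact time intervals.

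Given this regularity, I would apply the Picard--Lindel\"of existence--uniqueness theorem to obtain, for each fixed $x$, a unique maximal solution $q(\cdot,x)$. Global existence on all of $[0,T)$ follows from the boundedness of $F$: since $|\dot q|=|ku^{2}|\le C$ on each strip, trajectories cannot escape to infinity in finite time, so the maximal solution is defined for all $t\in[0,T)$. The standard theorem on differentiable dependence of flows on the initial datum (valid because $F\in C^{1}$ in $q$) then yields $q\in C^{1}([0,T)\times\mathbb{R},\mathbb{R})$.

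To obtain the formula for $q_{x}$, I differentiate the trajectory equation in $x$. Writing $q_{x}:=\partial_{x}q$, which exists and is $C^{1}$ in $t$ by smooth dependence, $q_{x}$ solves the linear variational equation
\begin{equation*}
\frac{d}{dt}q_{x}(t,x)=2ku(t,q(t,x))u_{x}(t,q(t,x))\,q_{x}(t,x),\qquad q_{x}(0,x)=1 .
\end{equation*}
Integrating this scalar linear ODE immediately gives
\begin{equation*}
q_{x}(t,x)=\exp\left(\int_{0}^{t}2ku(\tau,q(\tau,x))u_{x}(\tau,q(\tau,x))\,d\tau\right)>0,
\end{equation*}
which is exactly the claimed identity and shows $q_{x}>0$ on $[0,T)\times\mathbb{R}$.

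Finally, $q_{x}(t,\cdot)>0$ shows $q(t,\cdot)$ is strictly increasing, hence injective, with a $C^{1}$ inverse by the inverse function theorem. The only genuine (though mild) point is surjectivity of $q(t,\cdot)$ onto $\mathbb{R}$, which I regard as the main obstacle: it follows from the two-sided bound $|q(t,x)-x|\le Ct$ coming from $|\dot q|\le C$, which forces $q(t,x)\to\pm\infty$ as $x\to\pm\infty$, so that $q(t,\cdot)$ is onto. Combining injectivity, surjectivity and $C^{1}$ invertibility shows $q(t,\cdot)$ is an increasing diffeomorphism of $\mathbb{R}$. As every ingredient is classical, the argument parallels Lemma~3.7 in \cite{Wu2012}, which is why the detailed verification may be omitted.
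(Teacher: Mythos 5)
Your argument is correct and is exactly the standard ODE-flow proof that the paper itself omits, deferring to Lemma~3.7 of \cite{Wu2012}: Picard--Lindel\"of with the Sobolev embedding $H^{s}\hookrightarrow C^{1}$ for $s>3/2$, global existence from boundedness of $ku^{2}$ on strips $[0,T']\times\mathbb{R}$, the variational equation integrated to the exponential formula for $q_{x}$, and monotonicity plus the bound $|q(t,x)-x|\le Ct$ for surjectivity. All steps, including the joint continuity of $\partial_{q}F=2kuu_{x}$ needed for $C^{1}$ dependence on $x$, are justified, so nothing further is required.
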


\begin{lemma}
\label{lem305} Let $z_{0}\in H^{s}\times H^{s-1} (s>\frac{3}{2})$, and $T>0$ be the maximal existence time of the corresponding solution $z=\left(\begin{array}{cc}u\\ \rho\end{array}\right)$ of system \eqref{202}.
If there exists $N>0$ such that $kuu_{x}(t,x)\geq-N$ for all $(t,x)\in[0,T)\times \mathbb{R}$, then
\begin{equation*}
\|\rho(t,\cdot)\|_{L^{\infty}}=\|\rho(t,q(t,\cdot))\|_{L^{\infty}}\leq e^{NT}\|\rho_{0}(\cdot)\|_{L^{\infty}}.
\end{equation*}
\end{lemma}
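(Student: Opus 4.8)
The plan is to work along the characteristics defined by the flow map $q$ from \eqref{3014} and reduce the transport equation for $\rho$ to a scalar linear ODE. First I would set $P(t,x):=\rho(t,q(t,x))$ and differentiate along the flow. Using the chain rule together with the trajectory equation $q_{t}=ku^{2}(t,q)$ and the second equation of \eqref{202}, namely $\rho_{t}+ku^{2}\rho_{x}=-k\rho uu_{x}$, the transport terms cancel and one obtains
\[
\frac{d}{dt}P(t,x)=\bigl(\rho_{t}+ku^{2}\rho_{x}\bigr)(t,q(t,x))=-k\,(uu_{x})(t,q(t,x))\,P(t,x).
\]
This is a linear ODE in $t$ for each fixed $x$, with initial value $P(0,x)=\rho_{0}(x)$.

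Next I would integrate this ODE explicitly to get
\[
P(t,x)=\rho_{0}(x)\exp\!\left(-\int_{0}^{t}k\,(uu_{x})(\tau,q(\tau,x))\,d\tau\right).
\]
The hypothesis $kuu_{x}(t,x)\geq-N$ gives $-k(uu_{x})(\tau,q(\tau,x))\leq N$ for every $\tau\in[0,T)$, so the exponent is bounded above by $Nt\leq NT$. Hence $|P(t,x)|\leq e^{NT}|\rho_{0}(x)|$, and taking the supremum over $x\in\mathbb{R}$ yields $\|\rho(t,q(t,\cdot))\|_{L^{\infty}}\leq e^{NT}\|\rho_{0}\|_{L^{\infty}}$.

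Finally, to convert the norm along characteristics into the ordinary $L^{\infty}$-norm of $\rho(t,\cdot)$, I would invoke Lemma \ref{lem304}, which guarantees that $q(t,\cdot)$ is an increasing diffeomorphism of $\mathbb{R}$ onto itself. Because $x\mapsto q(t,x)$ is a bijection of $\mathbb{R}$, the set $\{q(t,x):x\in\mathbb{R}\}$ is all of $\mathbb{R}$, so that $\sup_{x}|\rho(t,q(t,x))|=\sup_{y}|\rho(t,y)|$, i.e.\ $\|\rho(t,\cdot)\|_{L^{\infty}}=\|\rho(t,q(t,\cdot))\|_{L^{\infty}}$. Combining this identity with the previous estimate completes the proof.

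The argument is essentially routine once the characteristic reduction is made; the only point requiring care is the equality of the two norms, which rests entirely on the surjectivity of the flow map established in Lemma \ref{lem304}. I do not expect any genuine obstacle beyond correctly tracking the sign in the exponent so that the one-sided lower bound on $kuu_{x}$ produces the desired upper bound $e^{NT}$.
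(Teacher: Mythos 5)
Your proposal is correct and follows essentially the same route as the paper: differentiating $\rho(t,q(t,x))$ along the characteristics of \eqref{3014}, integrating the resulting linear ODE to get $\rho(t,q(t,x))=\rho_{0}(x)e^{-k\int_{0}^{t}(uu_{x})(\tau,q(\tau,x))\,d\tau}$, and bounding the exponent by $NT$ via the hypothesis $kuu_{x}\geq-N$. Your extra remark justifying $\|\rho(t,\cdot)\|_{L^{\infty}}=\|\rho(t,q(t,\cdot))\|_{L^{\infty}}$ through the surjectivity of the flow map in Lemma \ref{lem304} is a point the paper uses implicitly, so nothing is missing.
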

\begin{proof}
By \eqref{3014} and \eqref{202}, we get
\begin{align*}
\frac{d}{dt}\rho(t,q(t,x))
=\rho_{t}+\rho_{x}q_{t}=\rho_{t}+ku^{2}\rho_{x}=-k\rho uu_{x},
\end{align*}
which leads to
$$\rho(t,q(t,x))=\rho_{0}e^{-k\int_{0}^{t}uu_{x}(\tau,q(\tau,x))d\tau}.$$
If there exists a positive constant $N>0$ such that $kuu_{x}\geq-N$, then we get
\begin{equation*}
\|\rho(t,\cdot)\|_{L^{\infty}}=\|\rho(t,q(t,\cdot))\|_{L^{\infty}}
\leq\|\rho_{0}(\cdot)\|_{L^{\infty}} e^{NT}<\infty.
\end{equation*}
This completes the proof of Lemma \ref{lem302}.
\end{proof}

\begin{theorem}
\label{the302} Let $z_{0}:=\left(
\begin{array}{cc} u_{0}\\ \rho_{0}\end{array}\right)\in H^{2}\times H^{1}$ be given, and $T$ be the maximal existence time of the corresponding solution $z=\left(\begin{array}{cc}u\\ \rho\end{array}\right)$ of \eqref{202} with the initial data $z_{0}$.
Then $T$ is finite if and only if
$$
\liminf_{t\rightarrow T}\inf_{x\in\mathbb{R}}{kuu_{x}}(t,x)=-\infty.
$$
\end{theorem}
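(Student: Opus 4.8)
My plan is to establish the two implications separately, pivoting on the continuation principle behind Theorem~\ref{the201} (the solution persists in time as long as $\|u\|_{2}^{2}+\|\rho\|_{1}^{2}$ stays bounded) together with the a priori controls of Lemmas~\ref{lem301} and \ref{lem305}. Throughout I will do the energy manipulations directly on the $H^{2}\times H^{1}$ quantities rather than invoking the $L^\infty$-criterion of Theorem~\ref{the301}, since $\|\rho_x\|_{L^\infty}$ need not even be finite for $\rho_0\in H^{1}$.

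\emph{The direction $\liminf_{t\to T}\inf_x kuu_x=-\infty\Rightarrow T<\infty$.} By Lemma~\ref{lem301}, $\|u(t)\|_{L^\infty}^{2}\le\frac12\|u(t)\|_{1}^{2}\le\frac12\|u_0\|_{1}^{2}$ for all $t$, so $\|u\|_{L^\infty}$ is uniformly bounded, say by $C_0$. Since $|kuu_x|\le |k|\,C_0\,\|u_x\|_{L^\infty}$ pointwise, the hypothesis forces $\|u_x(t_n,\cdot)\|_{L^\infty}\to\infty$ along some sequence $t_n\to T$, whence $\|u(t_n)\|_{2}\to\infty$. As the solution extends past any time at which its norm remains bounded, $T$ must be finite.

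\emph{The direction $T<\infty\Rightarrow\liminf_{t\to T}\inf_x kuu_x=-\infty$.} I argue by contradiction: suppose $T<\infty$ but $\liminf_{t\to T}\inf_x kuu_x>-\infty$, so that $kuu_x\ge -N$ on $[0,T)$ for some $N>0$. Lemma~\ref{lem301} bounds $\|u\|_{L^\infty}$, $\|u_x\|_{L^2}$ and $\|\rho\|_{L^2}$ uniformly, and feeding $kuu_x\ge -N$ into Lemma~\ref{lem305} gives $\|\rho(t)\|_{L^\infty}\le e^{NT}\|\rho_0\|_{L^\infty}$ on $[0,T)$. The heart of the matter is an $L^{2}$ energy estimate for the momentum $y=u-u_{xx}$. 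Multiplying the first equation of \eqref{303} by $y$ and integrating, the convection term integrates by parts as $k\int u^2 y_x y\,dx=-k\int uu_x y^2\,dx$, which combines with the transport coefficient $3k\int uu_x y^2\,dx$ to produce exactly
\begin{equation*}
\tfrac12\tfrac{d}{dt}\|y\|_{L^2}^{2}+2\!\int (kuu_x)\,y^2\,dx=\int Fy\,dx,\qquad F:=-\Big(g(u)-\tfrac{4k}{3}u^3\Big)_x-\lambda y+k\rho(u\rho)_x,
\end{equation*}
so that the sole dangerous quadratic term is controlled by the one-sided bound: $-2\int(kuu_x)y^2\,dx\le 2N\|y\|_{L^2}^{2}$. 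The source $F$ is estimated in $L^{2}$ through the already-bounded quantities together with $\|u_x\|_{L^\infty}\le\frac12\|y\|_{L^2}$ (from $u=\Lambda^{-2}y$ and Young's inequality) and $\|\rho_x\|_{L^2}$. Running the analogous computation on $\partial_x$ of the second equation of \eqref{202}, the identity $k\int u^2\rho_x\rho_{xx}\,dx=-k\int uu_x\rho_x^2\,dx$ again yields $2\int(kuu_x)\rho_x^2\,dx\ge -2N\|\rho_x\|_{L^2}^{2}$, the remaining terms (into which $u_{xx}=u-y$ is substituted) being bounded by the same controlled quantities. Adding the two estimates gives $\frac{d}{dt}\big(\|y\|_{L^2}^{2}+\|\rho_x\|_{L^2}^{2}\big)\le C\big(1+\|y\|_{L^2}^{2}+\|\rho_x\|_{L^2}^{2}\big)$ with $C=C(N,T,\|u_0\|_1,\|\rho_0\|_{L^\infty})$, and Gronwall's inequality bounds $\|y\|_{L^2}$ and $\|\rho_x\|_{L^2}$, hence $\|u\|_{2}^{2}+\|\rho\|_{1}^{2}$, on $[0,T)$, contradicting the maximality of the finite $T$.

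All the integrations by parts are first justified for smooth data (or after mollification) and then transferred to $H^{2}\times H^{1}$ via the continuous dependence in Theorem~\ref{the201}. I expect the crux to be the coupled estimate above: one must close the $\|y\|_{L^2}$--$\|\rho_x\|_{L^2}$ system using \emph{only} the one-sided bound $kuu_x\ge -N$ (never an upper bound on $u_x$), and bound the cross terms $k\rho(u\rho)_x$ and $k\rho uu_{xx}$ through $\|u_x\|_{L^\infty}\le\frac12\|y\|_{L^2}$ and the conserved lower-order norms rather than through the very quantities being estimated, so that the Gronwall argument does not become circular.
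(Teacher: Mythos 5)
Your proposal is correct and takes essentially the same route as the paper's own proof: for the main implication you assume $kuu_{x}\ge -N$, run the same $L^{2}$ energy estimate on $y=u-u_{xx}$ and $\rho_{x}$ (with the identical cancellations $k\int_{\mathbb{R}} u^{2}yy_{x}\,dx=-k\int_{\mathbb{R}} uu_{x}y^{2}\,dx$ and $k\int_{\mathbb{R}} u^{2}\rho_{x}\rho_{xx}\,dx=-k\int_{\mathbb{R}} uu_{x}\rho_{x}^{2}\,dx$, so that only the one-sided bound enters the dangerous quadratic terms), control $\|\rho\|_{L^{\infty}}$ via Lemma~\ref{lem305}, close by Gronwall and the continuation property, and handle the converse by the same norm-inflation observation (sharing the paper's own brevity on that direction). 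The only cosmetic deviations --- bounding the cross terms $k\rho(u\rho)_{x}$ and $kuu_{xx}\rho\rho_{x}$ individually through $\|u_{x}\|_{L^{\infty}}\le\tfrac12\|y\|_{L^{2}}$ and the conserved lower-order norms (where the paper instead adds the two identities so the $\int_{\mathbb{R}} uu_{xx}\rho\rho_{x}\,dx$ terms cancel exactly) and concluding directly from local well-posedness rather than citing Theorem~\ref{the301} --- do not change the method, and your estimates indeed stay quadratic so the Gronwall argument closes.
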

\begin{proof}
By Theorem \ref{the201}, for any initial data $z_{0}\in H^{2}\times H^{1}$ which corresponding $s=2$, there exists a unique local solution $z$ of system \eqref{202}.
The following study will focus on investigating the necessary and sufficient conditions for the occurrence of wave breaking in finite time.

Note that $y=u-u_{xx}$, we get
\begin{equation*}
\|u\|_{H^{2}}^{2}\leq \|y\|_{L^{2}}^{2}=\int_{\mathbb{R}}(u-u_{xx})^{2}dx
=\int_{\mathbb{R}}(u^{2}+u_{xx}^{2}+2u_{x}^{2})dx,
\end{equation*}
where we used the fact that
$
\int_{\mathbb{R}}2uu_{xx}dx=-2\int_{\mathbb{R}}u_{x}^{2}dx.
$

Multiplying the first equation in \eqref{303} by $2y$, we get
\begin{equation}
2yy_{t}+2k u^{2}yy_{x}+6k uu_{x}y^{2}+2ky\rho(u\rho)_{x}=-2\left(g(u)-\frac{4k}{3}u^{3}\right)_{x}y-2\lambda y^{2}.
\label{3015}
\end{equation}
Integrating \eqref{3015} with respect to\ $x$ over $\mathbb{R}$, we obtain
\begin{align*}
\frac{d}{dt}\int_{\mathbb{R}}y^{2}dx=&-4k \int_{\mathbb{R}}uu_{x}y^{2}dx
-2\lambda\int_{\mathbb{R}}y^{2}dx-2\int_{\mathbb{R}}
\left(g(u)-\frac{4k}{3}u^{3}\right)_{x}ydx\nonumber\\
&+2k\int_{\mathbb{R}}u_{x}u_{xx}\rho^{2}dx+2k\int_{\mathbb{R}}uu_{xx}\rho \rho_{x}dx.
\label{5022}
\end{align*}
Multiplying the second equation in \eqref{303} by $2\rho$, we get
\begin{equation}
2\rho\rho_{t}+2ku^{2}\rho\rho_{x}=-2k\rho^{2}uu_{x}.
\label{3016}
\end{equation}
Integrating \eqref{3016} with respect to\ $x$ over $\mathbb{R}$, we obtain
\begin{equation*}
\frac{d}{dt}\int_{\mathbb{R}}\rho^{2}dx=0.
\label{5022}
\end{equation*}
Differentiating the second equation in \eqref{303} with respect to $x$, we get
\begin{equation}
\rho_{tx}+3kuu_{x}\rho_{x}+ku^{2}\rho_{xx}+k\rho u_{x}^{2}+kuu_{xx}\rho=0.
\label{3017}
\end{equation}
Multiplying \eqref{3017} by $2\rho_{x}$, we get
\begin{equation}
2\rho_{x}\rho_{tx}+6kuu_{x}\rho_{x}^{2}+2ku^{2}\rho_{x}\rho_{xx}+2k\rho\rho_{x} u_{x}^{2}+2kuu_{xx}\rho\rho_{x}=0.
\label{3018}
\end{equation}
Integrating \eqref{3018} with respect to $x$, we get
\begin{align*}
\frac{d}{dt}\int_{\mathbb{R}}\rho_{x}^{2}dx
=&-4k\int_{\mathbb{R}}uu_{x}\rho_{x}^{2}dx-2k\int_{\mathbb{R}}u^{2}_{x}\rho\rho_{x}dx
-2k\int_{\mathbb{R}}uu_{xx}\rho\rho_{x}dx\nonumber\\
=&-4k\int_{\mathbb{R}}uu_{x}\rho_{x}^{2}dx+2k\int_{\mathbb{R}}u_{x}u_{xx}\rho^{2}dx
-2k\int_{\mathbb{R}}uu_{xx}\rho\rho_{x}dx.
\label{5022}
\end{align*}
Therefore, we get
\begin{align*}
\frac{d}{dt}\int_{\mathbb{R}}(y^{2}+\rho^{2}+\rho_{x}^{2})dx
=&-4k\int_{\mathbb{R}}uu_{x}y^{2}dx
-2\lambda\int_{\mathbb{R}}y^{2}dx-2\int_{\mathbb{R}}
\left(g(u)-\frac{4k}{3}u^{3}\right)_{x}ydx\nonumber\\
&+4k\int_{\mathbb{R}}u_{x}u_{xx}\rho^{2}dx-4k\int_{\mathbb{R}}uu_{x}\rho_{x}^{2}dx.
\end{align*}
Note that
\begin{align*}
2\int_{\mathbb{R}}\left(g(u)-\frac{4k}{3}u^{3}\right)_{x}ydx
\leq&\int_{\mathbb{R}}\left(\left(g(u)-\frac{4k}{3}u^{3}\right)_{x}^{2}+y^{2}\right)dx
\nonumber\\
\leq&\left\|g(u)-\frac{4k}{3}u^{3}\right\|_{1}^{2}+\|y\|_{L^{2}}^{2}\nonumber\\
\leq& C_{1}\|u\|_{1}^{2}+\|y\|_{L^{2}}^{2}\nonumber\\
\leq& C_{2}\|y\|_{L^{2}}^{2},
\end{align*}
where $C_{1}, C_{2}>0$ represent specific constants.

If $k uu_{x}(t,x)$ is bounded from below on $[0,T)\times \mathbb{R}$, i.e., there exists $N>0$ such that $k uu_{x}(t,x)>-N$ on $[0,T)\times \mathbb{R}$. Then we get
\begin{align*}
\frac{d}{dt}\int_{\mathbb{R}}(y^{2}+\rho^{2}+\rho_{x}^{2})dx
\leq&(4N+2\lambda+4|k|e^{NT}\|\rho_{0}(\cdot)\|_{L^{\infty}}^{2}+c_{2})
\int_{\mathbb{R}}(y^{2}+\rho^{2}+\rho_{x}^{2})dx\nonumber\\
\leq&(C+2\lambda)\int_{\mathbb{R}}(y^{2}+\rho^{2}+\rho_{x}^{2})dx.
\end{align*}
Applying Gronwall's inequality, we get
\begin{equation*}
\int_{\mathbb{R}}(y^{2}+\rho^{2}+\rho_{x}^{2})dx\leq e^{\left(C+ 2\lambda\right)t}\int_{\mathbb{R}}(y_{0}^{2}+\rho_{0}^{2}+\rho_{0x}^{2})dx.
\label{5026}
\end{equation*}
It then follows from the Sobolev embedding theorem $H^{2}\hookrightarrow L^{\infty}$, we get
\begin{equation*}
\|u\|_{2}^{2}+\|\rho\|_{1}^{2}\leq \|y\|_{L^{2}}^{2}+\|\rho\|_{1}^{2}\leq (\|y_{0}\|_{L^{2}}^{2}+\|\rho_{0}\|_{1}^{2})e^{\left(C+2\lambda\right)t}.
\end{equation*}
By using Theorem \ref{the301}, we deduce that the solution exists globally in time.

On the other hand, if $kuu_{x}(t,x)$ is unbounded from below, by applying Theorem \ref{the301} and using the Sobolev embedding theorem $H^{s}\hookrightarrow L^{\infty}$ ($s>\frac{1}{2})$, we infer that the solution will blow up in finite time.
This completes the proof of Theorem \ref{the302}.
\end{proof}

\section{Persistence property}

\setcounter{equation}{0}

\label{sec:4}

In this section, we mainly discuss the persistence property for a weakly dissipative generalized 2-component Novikov system.
To this end, we reformulate system \eqref{202} as follows
\begin{equation}
\begin{cases}
u_{t}+k u^{2}u_{x}=-P\ast A(u,\rho)-\partial_{x}P\ast B(u,\rho)-\lambda u,\\
\rho_{t}+k u^{2}\rho_{x}=-k \rho uu_{x},
\label{401}
\end{cases}
\end{equation}
where $A(u,\rho)=\frac{k}{2}\left(u_{x}^{3}-u_{x}\rho^{2}\right)$ and $B(u,\rho)=g(u)+\frac{3k}{2}uu_{x}^{2}-\frac{k}{3}u^{3}
-\frac{k}{2}u\rho^{2}$.

Before giving the theorem on the persistence of solutions, we first introduce the definition for admissible weight function.
\begin{definition}[\cite{Brandolese2012}]
\label{def401} An admissible weighted function for system \eqref{401} is locally absolutely function $\phi: \mathbb{R} \to \mathbb{R}$ such that for some $\theta> 0$ and a.e. $x \in \mathbb{R}$, $|\phi'(x)|\leq \theta|\phi(x)|$, and that is f-moderate for some sub-multiplicative weight function $f$ satisfying
$$\inf_{\mathbb{R}}f>0\ \ \text{and}\ \ \int_{\mathbb{R}} \frac{f(x)}{e^{|x|}} \mathrm{d}x<\infty.$$
\end{definition}

In what follows, we state the main theorem of this section.
\begin{theorem}\label{the401}
Let $s>\frac{3}{2}$ and $2\leq p\leq\infty.$ Let $(u,\rho)\in\mathcal{C}([0,T),H^s(\mathbb{R})\times H^{s-1}(\mathbb{R}))\cap\mathcal{C}^1([0,T), H^{s-1}(\mathbb{R})\times H^{s-2}(\mathbb{R}))$ be a strong solution of system \eqref{401} such that $(u_0,\rho_0)$ satisfies $u_0\psi,u_{0x}\psi$ and $\rho_0\psi\in L^p(\mathbb{R}),$ where $\psi$ is an admissible weighted function for system \eqref{401}. Then, for all $t\in[0,T)$, there exists a constant $C>0$ depending only on the weight function $\psi$ such that
\begin{align}
\|u(t)\psi\|_{L^{p}}+\|u_{x}(t)\psi\|_{L^{p}}+\|\rho(t)\psi\|_{L^{p}}
\leq e^{(CM^{2}+\lambda)t}(\|u_{0}\psi\|_{L^{p}}+\|u_{0x}\psi\|_{L^{p}}+\|\rho_{0}\psi\|_{L^{p}}),
\label{402}
\end{align}
where
$$M\equiv\sup_{t\in[0,T)}(\|u(t)\|_{L^{\infty}}+\|u_{x}(t)\|_{L^{\infty}}+\|\rho(t)\|_{L^{\infty}})<\infty.$$
\end{theorem}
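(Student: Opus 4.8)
The plan is to run weighted $L^p$ energy estimates on the three quantities $u$, $u_x$ and $\rho$ simultaneously, against a \emph{bounded truncation} of the weight $\psi$, and then pass to two limits. First I would derive a closed transport equation for $u_x$ by differentiating the first equation of \eqref{401} in $x$. Since $P(x)=\tfrac12 e^{-|x|}$ is the Green's function of $\Lambda^{-2}=(1-\partial_x^2)^{-1}$, we have $\partial_x^2 P\ast B=P\ast B-B$, so that
\begin{equation*}
u_{tx}+ku^2u_{xx}=-2kuu_x^2-\partial_xP\ast A-P\ast B+B-\lambda u_x .
\end{equation*}
Together with the first and second equations of \eqref{401}, this gives transport equations for $u$, $u_x$ and $\rho$ in which every forcing term is either a pointwise product of the unknowns (controllable by the $L^\infty$ bound $M$) or a convolution $P\ast(\cdot)$ or $\partial_xP\ast(\cdot)$ of $A$ and $B$. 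Next I would fix $2\le p<\infty$ (treating $p=\infty$ only at the end) and introduce the truncated weight $\psi_N:=\min\{\psi,N\}$. The point of the truncation is that $\psi_N$ is bounded, so $\psi_Nu,\psi_Nu_x,\psi_N\rho\in L^p$ a priori from the embeddings $H^s,H^{s-1}\hookrightarrow L^p$, while $\psi_N$ still satisfies $|\psi_N'|\le\theta\psi_N$ a.e. and remains $f$-moderate with a constant independent of $N$; this is what makes the following estimates legitimate and uniform in $N$.

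The crux of the argument is the weighted convolution bound
\begin{equation*}
\|\psi_N(P\ast h)\|_{L^p}+\|\psi_N(\partial_xP\ast h)\|_{L^p}\le C\,\|\psi_N h\|_{L^p},
\end{equation*}
with $C$ independent of $N$ and $p$. I would prove it by writing $\psi_N(x)(P\ast h)(x)=\int P(x-y)\tfrac{\psi_N(x)}{\psi_N(y)}\psi_N(y)h(y)\,dy$, using moderateness to bound $\tfrac{\psi_N(x)}{\psi_N(y)}\le C_0 f(x-y)$, and then applying Young's inequality to obtain $C=C_0\|Pf\|_{L^1}=\tfrac{C_0}{2}\int e^{-|x|}f(x)\,dx$, which is finite precisely by the admissibility hypothesis $\int f(x)e^{-|x|}\,dx<\infty$; the same bound holds for $\partial_xP$ since $|\partial_xP|\le P$. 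This single lemma is where all three structural assumptions on $\psi$ (moderateness, $\inf_{\mathbb R} f>0$, and the integrability of $f/e^{|x|}$) enter, and I expect it to be the main obstacle.

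With this lemma available the energy estimates become routine. Setting $U:=\psi_Nu$, $V:=\psi_Nu_x$, $R:=\psi_N\rho$, multiplying each transport equation by $|U|^{p-2}U$, $|V|^{p-2}V$, $|R|^{p-2}R$ respectively and integrating, the transport term yields $-\tfrac{2}{p}\int uu_x|U|^p$ after one integration by parts, while the weight commutator yields $k\int u^2\tfrac{\psi_N'}{\psi_N}|U|^p$; both are bounded by $CM^2\|U\|_{L^p}^p$ using $\|uu_x\|_{L^\infty}\le M^2$ and $|\psi_N'/\psi_N|\le\theta$. The forcing terms are controlled by the convolution lemma together with $\|\psi_N A\|_{L^p}+\|\psi_N B\|_{L^p}\le C(M)\big(\|U\|_{L^p}+\|V\|_{L^p}+\|R\|_{L^p}\big)$, where the cubic terms each pull out two $L^\infty$ factors ($\le M^2$) and the $g(u)$ term uses $g(0)=0$ and the mean value theorem to extract $\sup_{|\xi|\le M}|g'(\xi)|$. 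Summing the three inequalities and dividing out a power of the norm gives
\begin{equation*}
\frac{d}{dt}\big(\|U\|_{L^p}+\|V\|_{L^p}+\|R\|_{L^p}\big)\le (CM^2+\lambda)\big(\|U\|_{L^p}+\|V\|_{L^p}+\|R\|_{L^p}\big).
\end{equation*}

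Finally, Gronwall's inequality yields \eqref{402} with $\psi$ replaced by $\psi_N$. Since $\psi_N\uparrow\psi$ and $\psi_N\le\psi$, monotone convergence lets me send $N\to\infty$ on both sides, recovering the estimate for the full weight $\psi$. Because the Gronwall constant $CM^2+\lambda$ is independent of $p$, a further passage $p\to\infty$, using $\|g\|_{L^\infty}=\lim_{p\to\infty}\|g\|_{L^p}$ on the relevant functions, covers the endpoint case $p=\infty$ and completes the proof. (Here $C$ absorbs the weight constant $C_0\|Pf\|_{L^1}$, the algebra constants, and the $g$-dependent factor $\sup_{|\xi|\le M}|g'(\xi)|$.)
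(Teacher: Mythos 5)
Your proposal is correct and follows essentially the same route as the paper: truncate the weight as $\psi_N=\min\{\psi,N\}$, run weighted $L^p$ energy estimates on $u$, $u_x$, $\rho$ using $|\psi_N'|\le\theta\psi_N$ for the transport commutators and the moderateness-plus-Young bound $\|\psi_N(P\ast h)\|_{L^p}\le C_0\|Pf\|_{L^1}\|\psi_N h\|_{L^p}$ for the nonlocal terms, then Gronwall and the limits $N\to\infty$ and $p\to\infty$. Your only deviations are presentational improvements: you isolate the weighted convolution estimate as an explicit lemma (the paper buries it inside inequality (4.6)) and you correctly replace the paper's distributional kernel $P_{xx}$ by the identity $\partial_x^2P\ast B=P\ast B-B$, which the paper uses only implicitly.
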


The fundamental class of instances for admissible weighted functions is provided by the family of functions.
$$\psi_{a,b,c,d}=e^{a|x|^{b}}(1+|x|^{c})(\log(e+|x|)^{d}),$$
where $\psi$ must satisfy the following conditions
\begin{equation*}
a\geq0,\ \ 0\leq b\leq1,\ \ ab<1,\ \ c,d\in \mathbb{R}.
\end{equation*}

\begin{definition}[\cite{Brandolese2012}]
\label{def402} A function $f:\mathbb{R}\rightarrow\mathbb{R}$ called sub-multiplicative if
$$f(x+y)\leq f(x)f(y),\ \ \text{for all}\ \ x, y \in \mathbb{R}.$$
Given a non-negative sub-multiplicative function $f$, a function $\psi$ is called $f$-moderate if and only if
$$\exists C_0> 0:\psi(x+y)\leq C_0 f(x) \psi(y),$$
for all $x, y \in \mathbb{R}$. A function $\psi$ is called moderate if and only if there exists a nonnegative sub-multiplicative function $f$ such that $\psi$ is $f$-moderate.
\end{definition}

Let us recall the most standard examples:
$$\psi_{a,b,c,d}(x)= e^{a|x|^b}(1+|x|^c)\left(\log(e+|x|)^d\right).$$
We have the following conditions (see \cite{Brandolese2012}):
\begin{description}
\item[$(i)$] For $a, c, d > 0$ and $0 \leq b \leq 1$, the function $\phi_{a,b,c,d}(x)$ is sub-multiplicative.
\item[$(ii)$] If $a, c, d \in \mathbb{R}$ and $0 \leq b \leq 1$, then $\phi_{a,b,c,d}(x)$ is moderate, that is $\phi_{a,b,c,d}$ is $\psi_{\alpha,\beta,\gamma,\delta}$-moderate for $|a|\leq\alpha, |b|\leq\beta, |c|\leq\gamma, |d|\leq\delta$.
\end{description}

\textbf{Proof of Theorem 4.1.} We first introduce a auxiliary function as follows
\begin{equation*}
\psi_{N}(x)=
\begin{cases}
\psi(x),\ \ \ \psi(x)\leq N,\\
N,\ \ \ \ \ \ \ \psi(x)>N,
\end{cases}
\end{equation*}
where $N\in \mathbb{N}$. For all $N$, we have
\begin{equation*}
\|\psi_{N}(x)\|_{L^{\infty}}\leq N,\ \ 0<|\psi_{N}'(x)|\leq \theta|\psi_{N}(x)|\ \ \text{and}\ \ \psi_{N}(x+y)\leq C_{1}f(x)\psi_{N}(y),
\end{equation*}
where $C_{1}=\max\{C_{0},\alpha^{-1}\}, C_{0}$ is given in Definition 4.2, $\alpha=\inf_{x\in \mathbb{R}}f(x)>0.$

In what follows, we consider the case $2\leq p<\infty$. Multiplying both sides of the first equation of system \eqref{401} by $\psi_{N}|u\psi_{N}|^{p-2}(u\psi_{N})$ and integrating the resulting equation with respect to $x$ over $\mathbb{R}$, we can obtain
\begin{align}
&\int_{\mathbb{R}}(u_{t}\psi_{N})|u\psi_{N}|^{p-2}(u\psi_{N})dx+k\int_{\mathbb{R}}(uu_{x})|u\psi_{N}|^{p}dx\nonumber\\
=&-\int_{\mathbb{R}}(\psi_{N}\cdot P\ast A(u,\rho))|u\psi_{N}|^{p-2}(u\psi_{N})dx
-\int_{\mathbb{R}}(\psi_{N}\cdot P_{x}\ast B(u,\rho))|u\psi_{N}|^{p-2}(u\psi_{N})dx\nonumber\\
&-\lambda\int_{\mathbb{R}}|u\psi_{N}|^{p}dx.
\label{403}
\end{align}

%Since $H^{s}\hookrightarrow L^{\infty}$ with $s>\frac{3}{2}$ and $(u,\rho)\in C([0,T);H^{s}\times H^{s-1})\cap C^{1}([0,T);H^{s-1}\times H^{s-2})$,
%it follows that
%$$M=\sup_{t\in[0,T)}(\|u\|_{L^{\infty}}+\|u_{x}\|_{L^{\infty}}+\|\rho\|_{L^{\infty}})<\infty.$$

Noticing that
\begin{align}
\int_{\mathbb{R}}(u_{t}\psi_{N})|u\psi_{N}|^{p-2}(u\psi_{N})dx
=\frac{1}{p}\frac{d}{dt}\|u\psi_{N}\|_{L^{p}}^{p}
=\|u\psi_{N}\|_{L^{p}}^{p-1}\frac{d}{dt}\|u\psi_{N}\|_{L^{p}},
\label{404}
\end{align}
\begin{equation}
\int_{\mathbb{R}}(uu_{x})|u\psi_{N}|^{p}dx
\leq\|uu_{x}\|_{L^{\infty}}\|u\psi_{N}\|_{L^{p}}^{p}\leq M^{2}\|u\psi_{N}\|_{L^{p}}^{p},
\label{405}
\end{equation}
\begin{align}
&\left|\int_{\mathbb{R}}(\psi_{N}\cdot P\ast A(u,\rho))|u\psi_{N}|^{p-2}(u\psi_{N})dx\right|\nonumber\\
\leq&\|u\psi_{N}\|_{L^{p}}^{p-1}\|\psi_{N}\cdot P\ast A(u,\rho)\|_{L^{p}}\nonumber\\
\leq& C_{2}\|u\psi_{N}\|_{L^{p}}^{p-1}\|P\ast f\|_{1}\left\|\psi_{N}\left(\frac{k}{2}(u_{x}^{3}-u_{x}\rho^{2})\right)\right\|_{L^{p}}\nonumber\\
\leq& C_{3}M^{2}\|u\psi_{N}\|_{L^{p}}^{p-1}\left(\left\|u_{x}\psi_{N}\right\|_{L^{p}}+\left\|\rho\psi_{N}\right\|_{L^{p}}\right).
\label{407}
\end{align}
Similarly, we can estimate
\begin{align}
&\left|\int_{\mathbb{R}}(\psi_{N}\cdot P_{x}\ast B(u,\rho))|u\psi_{N}|^{p-2}(u\psi_{N})dx\right|\nonumber\\
\leq&C_{4}M^{2}\|u\psi_{N}\|_{L^{p}}^{p-1}\left(\left\|u\psi_{N}\right\|_{L^{p}}
+\left\|u_{x}\psi_{N}\right\|_{L^{p}}+\left\|\rho\psi_{N}\right\|_{L^{p}}\right).
\label{407}
\end{align}
\begin{equation}
\int_{\mathbb{R}}|u\psi_{N}|^{p}dx=\|u\psi_{N}\|_{L^{p}}^{p},
\label{408}
\end{equation}
where $C_{2}-C_{4}$ independent of $N$, $M=\sup_{t\in[0,T)}(\|u\|_{L^{\infty}}+\|u_{x}\|_{L^{\infty}}+\|\rho\|_{L^{\infty}})<\infty.$

Combining \eqref{403}-\eqref{408}, we get
\begin{equation}
\frac{d}{dt}\|u\psi_{N}\|_{L^{p}}\leq(C_{5}M^{2}+\lambda)\left(\left\|u\psi_{N}\right\|_{L^{p}}
+\left\|u_{x}\psi_{N}\right\|_{L^{p}}+\left\|\rho\psi_{N}\right\|_{L^{p}}\right).
\label{409}
\end{equation}

In what follows, we will give estimates on $u_{x}\psi_{N}$. Differentiating first equation of system \eqref{401} with respect to $x$, we get
\begin{equation}
u_{tx}+ku^{2}u_{xx}+2kuu_{x}^{2}=-P_{x}\ast A(u,\rho)-P_{xx}\ast B(u,\rho)-\lambda u_{x}.
\label{4010}
\end{equation}
Multiplying \eqref{4010} with $\psi_{N}|u_{x}\psi_{N}|^{p-2}(u_{x}\psi_{N})$ and integrating the resulting equation with respect to $x$ over $\mathbb{R}$, we can obtain
\begin{align}
&\int_{\mathbb{R}}\partial_{t}(u_{x}\psi_{N})|u_{x}\psi_{N}|^{p-2}(u_{x}\psi_{N})dx
+k\int_{\mathbb{R}}u^{2}u_{xx}\psi_{N}|u_{x}\psi_{N}|^{p-2}(u_{x}\psi_{N})dx\nonumber\\
&+2k\int_{\mathbb{R}}uu^{2}_{x}\psi_{N}|u_{x}\psi_{N}|^{p-2}(u_{x}\psi_{N})dx\nonumber\\
=&-\int_{\mathbb{R}}(\psi_{N}\cdot P_{x}\ast A(u,\rho))|u_{x}\psi_{N}|^{p-2}(u_{x}\psi_{N})dx
-\int_{\mathbb{R}}(\psi_{N}\cdot P_{xx}\ast B(u,\rho))|u_{x}\psi_{N}|^{p-2}(u_{x}\psi_{N})dx\nonumber\\
&-\lambda\int_{\mathbb{R}}|u_{x}\psi_{N}|^{p}dx.
\label{4011}
\end{align}
In what follows, we estimate the terms in\eqref{4011}.
\begin{align}
\int_{\mathbb{R}}\partial_{t}(u_{x}\psi_{N})|u_{x}\psi_{N}|^{p-2}(u_{x}\psi_{N})dx
=\|u_{x}\psi_{N}\|_{L^{p}}^{p-1}\frac{d}{dt}\|u_{x}\psi_{N}\|_{L^{p}},
\label{4012}
\end{align}
\begin{align}
&\left|\int_{\mathbb{R}}u^{2}u_{xx}\psi_{N}|u_{x}\psi_{N}|^{p-2}(u_{x}\psi_{N})dx\right|\nonumber\\
=&\left|\int_{\mathbb{R}}\left(\partial_{x}(u_{x}\psi_{N})-u_{x}\psi_{Nx}\right)u^{2}|u_{x}\psi_{N}|^{p-2}(u_{x}\psi_{N})dx\right|\nonumber\\
=&\left|\frac{1}{p}\int_{\mathbb{R}}u^{2}\left(\partial_{x}|u_{x}\psi_{N}|^{p}\right)|dx
-\int_{\mathbb{R}}|u_{x}\psi_{N}|^{p-2}(u_{x}\psi_{N})u^{2}u_{x}\psi_{Nx}dx\right|\nonumber\\
\leq&\frac{2M^{2}}{p}\|u_{x}\psi_{N}\|_{L^{p}}^{p}+\theta M^{2}\|u_{x}\psi_{N}\|_{L^{p}}^{p},
\label{4013}
\end{align}
\begin{equation}
\left|\int_{\mathbb{R}}uu^{2}_{x}\psi_{N}|u_{x}\psi_{N}|^{p-2}(u_{x}\psi_{N})dx\right|\leq 2 M^{2}\|u_{x}\psi_{N}\|_{L^{p}}^{p},
\label{4014}
\end{equation}
\begin{align}
&\left|\int_{\mathbb{R}}(\psi_{N}\cdot P_{x}\ast A(u,\rho))|u_{x}\psi_{N}|^{p-2}(u_{x}\psi_{N})dx\right|\nonumber\\
\leq&\|u_{x}\psi_{N}\|_{L^{p}}^{p-1}\|\psi_{N}\cdot P_{x}\ast A(u,\rho)\|_{L^{p}}\nonumber\\
\leq& C_{6}M^{2}\|u_{x}\psi_{N}\|_{L^{p}}^{p-1}\left(\left\|u_{x}\psi_{N}\right\|_{L^{p}}+\left\|\rho\psi_{N}\right\|_{L^{p}}\right),
\label{4015}
\end{align}
\begin{align}
&\left|\int_{\mathbb{R}}(\psi_{N}\cdot P_{xx}\ast B(u,\rho))|u_{x}\psi_{N}|^{p-2}(u_{x}\psi_{N})dx\right|\nonumber\\
\leq&C_{7}M^{2}\|u_{x}\psi_{N}\|_{L^{p}}^{p-1}\left(\left\|u\psi_{N}\right\|_{L^{p}}
+\left\|u_{x}\psi_{N}\right\|_{L^{p}}+\left\|\rho\psi_{N}\right\|_{L^{p}}\right).
\label{4016}
\end{align}
\begin{equation}
\int_{\mathbb{R}}|u_{x}\psi_{N}|^{p}dx=\|u_{x}\psi_{N}\|_{L^{p}}^{p},
\label{4017}
\end{equation}
Combining \eqref{4011}-\eqref{4017}, we obtain
\begin{equation}
\frac{d}{dt}\|u_{x}\psi_{N}\|_{L^{p}}\leq(C_{8}M^{2}+\lambda)\left(\left\|u\psi_{N}\right\|_{L^{p}}
+\left\|u_{x}\psi_{N}\right\|_{L^{p}}+\left\|\rho\psi_{N}\right\|_{L^{p}}\right).
\label{4018}
\end{equation}

In what follows, we multiply both sides of the second equation of \eqref{401} by $\psi_{N}|\rho\psi_{N}|^{p-2}(\rho\psi_{N})$ and integrating the resulting equation with respect to $x$ over $\mathbb{R}$, we get
\begin{align}
&\int_{\mathbb{R}}(\rho_{t}\psi_{N})|\rho\psi_{N}|^{p-2}(\rho\psi_{N})dx
+k\int_{\mathbb{R}}(u^{2}\rho_{x}\psi_{N})|\rho\psi_{N}|^{p-2}(\rho\psi_{N})dx\nonumber\\
=&-k\int_{\mathbb{R}}(uu_{x}\rho\psi_{N})|\rho\psi_{N}|^{p-2}(\rho\psi_{N})dx.
\label{4019}
\end{align}
In what follows, we estimate the terms in\eqref{4019}.
\begin{equation}
\int_{\mathbb{R}}(\rho_{t}\psi_{N})|\rho\psi_{N}|^{p-2}(\rho\psi_{N})dx
=\|\rho\psi_{N}\|_{L^{p}}^{p-1}\frac{d}{dt}\|\rho\psi_{N}\|_{L^{p}},
\label{4020}
\end{equation}
\begin{align}
&\left|\int_{\mathbb{R}}(u^{2}\rho_{x}\psi_{N})|\rho\psi_{N}|^{p-2}(\rho\psi_{N})dx\right|\nonumber\\
=&\left|\int_{\mathbb{R}}(u^{2}(\partial_{x}(\rho\psi_{N})-\rho\psi_{Nx})|\rho\psi_{N}|^{p-2}(\rho\psi_{N})dx\right|\nonumber\\
=&\left|-\frac{2}{p}uu_{x}|\rho\psi_{N}|^{p}dx-\int_{\mathbb{R}}(u^{2}\rho\psi_{Nx})|\rho\psi_{N}|^{p-2}(\rho\psi_{N})dx\right|\nonumber\\
\leq&\frac{2M^{2}}{p}\|\rho\psi_{N}\|_{L^{p}}^{p}+\theta M^{2}\|\rho\psi_{N}\|_{L^{p}}^{p}\nonumber\\
\leq&(1+\theta)M^{2}\|\rho\psi_{N}\|_{L^{p}}^{p},
\label{4021}
\end{align}
\begin{equation}
\left|\int_{\mathbb{R}}(uu_{x}\rho\psi_{N})|\rho\psi_{N}|^{p-2}(\rho\psi_{N})dx\right|\leq M^{2}\|\rho\psi_{N}\|_{L^{p}}^{p}.
\label{4022}
\end{equation}
From \eqref{4019}-\eqref{4022}, we obtain
\begin{equation}
\frac{d}{dt}\|\rho\psi_{N}\|_{L^{p}}\leq C_{9}M^{2}\left\|\rho\psi_{N}\right\|_{L^{p}}.
\label{4023}
\end{equation}

Combining \eqref{409}, \eqref{4018} and \eqref{4023}, we obtain
\begin{equation}
\frac{d}{dt}\left(\|u\psi_{N}\|_{L^{p}}+\|u_{x}\psi_{N}\|_{L^{p}}+\|\rho\psi_{N}\|_{L^{p}}\right)
\leq(CM^{2}+\lambda)\left(\left\|u\psi_{N}\right\|_{L^{p}}
+\left\|u_{x}\psi_{N}\right\|_{L^{p}}+\left\|\rho\psi_{N}\right\|_{L^{p}}\right),
\label{4024}
\end{equation}
where $C$ is a constant only depending on $\theta$ and $f$.

By using Gronwall's inequality, we get
\begin{equation}
\|u\psi_{N}\|_{L^{p}}+\|u_{x}\psi_{N}\|_{L^{p}}+\|\rho\psi_{N}\|_{L^{p}}
\leq e^{(CM^{2}+\lambda)t}(\|u_{0}\psi_{N}\|_{L^{p}}+\|u_{0x}\psi_{N}\|_{L^{p}}+\|\rho_{0}\psi_{N}\|_{L^{p}}).
\label{4025}
\end{equation}
Taking advantage of Lebesgue's dominated convergence theorem and taking the limit as $N$ goes to $\infty$, we get
\begin{equation}
\|u\psi\|_{L^{p}}+\|u_{x}\psi\|_{L^{p}}+\|\rho\psi\|_{L^{p}}
\leq e^{(CM^{2}+\lambda)t}(\|u_{0}\psi\|_{L^{p}}+\|u_{0x}\psi\|_{L^{p}}+\|\rho_{0}\psi\|_{L^{p}}).
\label{4025}
\end{equation}
It is obvious that the theorem also applies for $p=\infty$, since $\|\cdot\|_{L^{\infty}}=\lim_{p\rightarrow \infty}\|\cdot\|_{L^{p}}$.

This completes the proof of Theorem \ref{the401}.

{\bf Acknowledgements.} This work is partially supported by NSFC Grants (nos. 12201539 and 12225103), Natural Science Foundation of Xinjiang Uygur Autonomous Region (no. 2022D01C65), Natural Science Foundation of Gansu Province (no. 23JRRG0006), Youth Doctoral Support Project for Universities in Gansu Province (no. 2024QB-106), Innovation Fund for University Teachers of Gansu Province (no. 2024A-149), Doctoral Fund of Hexi University (no. KYQD2025006), and the NCST distinguished collaborative program.

\bibliographystyle{elsarticle-num}
\bibliography{<your-bib-database>}

%% Authors are advised to submit their bibtex database files. They are
%% requested to list a bibtex style file in the manuscript if they do
%% not want to use elsarticle-num.bst.

%% References without bibTeX database:

\section*{References}

\end{document}